\newcommand{\goodgap}{
  \hspace{\subfigcapskip}}
\numberwithin{equation}{section}
\def\eref#1{(\ref{#1})}
\def\ve{\varepsilon}
\def\f{\varphi}
\def\ve{\varepsilon}
\def\R{\mathbb{R}}
\def\A{\mathcal{A}}
\def\<{\big\langle}
\def\>{\big\rangle}
\def\diiv{\operatorname{div}}
\def\dist{\operatorname{dist}}
\def\f{{\operatorname{-flux}}}
\newtheorem{Lemma}{Lemma}[section]
\newtheorem{Theorem}{Theorem}[section]
\newtheorem{Proposition}{Proposition}[section]
\theoremstyle{remark}
\newtheorem{Remark}{Remark}[section]
\theoremstyle{definition}
\newtheorem{Definition}{Definition}[section]
\theoremstyle{definition}
\begin{document}
\title{Localized bases for finite dimensional homogenization approximations with  non-separated scales and high-contrast.}

\date{\today}

\author{  Houman Owhadi\footnote{Corresponding author. California Institute of
Technology, Computing \& Mathematical Sciences , MC 217-50 Pasadena, CA 91125,
owhadi@caltech.edu}, \quad  Lei Zhang\footnote{University of Oxford, Mathematical Institute}} \maketitle

\begin{abstract}
We construct finite-dimensional approximations of solution spaces of
 divergence form operators with $L^\infty$-coefficients. Our method does not rely on concepts of ergodicity or scale-separation, but on the property that the solution space of these operators is compactly embedded in $H^1$ if source terms are in the unit ball of $L^2$ instead of the unit ball of $H^{-1}$. Approximation spaces are generated by solving elliptic PDEs on localized sub-domains with source terms corresponding to approximation bases for $H^2$. The $H^1$-error estimates show that $\mathcal{O}(h^{-d})$-dimensional spaces with basis elements localized to sub-domains of diameter $\mathcal{O}(h^\alpha \ln \frac{1}{h})$ (with $\alpha \in [\frac{1}{2},1)$) result in an $\mathcal{O}(h^{2-2\alpha})$  accuracy for elliptic, parabolic and hyperbolic problems. For high-contrast media, the accuracy of the method is preserved provided that localized sub-domains contain buffer zones of width  $\mathcal{O}(h^\alpha \ln \frac{1}{h})$ where the contrast of the medium remains bounded.
 The proposed method can naturally be generalized to vectorial equations (such as elasto-dynamics).
\end{abstract}
\maketitle

\section{Introduction}

Consider the partial differential equation
\begin{equation}\label{scalarproblem0}
\begin{cases}
    -\diiv \Big(a(x)  \nabla u(x)\Big)=g(x) \quad  x \in \Omega;\, g \in L^2(\Omega), \;   a(x)=\{a_{ij} \in L^{\infty}(\Omega)\}\\
    u=0 \quad \text{on}\quad \partial \Omega,
    \end{cases}
\end{equation}
where $\Omega$ is a bounded subset of $\R^d$ with a smooth boundary (e.g., $C^2$)
and $a$ is symmetric and uniformly elliptic on $\Omega$. It follows
that the eigenvalues of $a$ are uniformly bounded from below and
above by two strictly positive constants, denoted by $\lambda_{\min}(a)$ and
$\lambda_{\max}(a)$.  Precisely, for all $\xi \in \R^d$ and $x\in \Omega$,
\begin{equation}\label{skjdhskhdkh3e}
\lambda_{\min}(a)|\xi|^2 \leq \xi^T a(x) \xi \leq \lambda_{\max}(a)
|\xi|^2.
\end{equation}

In this paper, we are interested in the homogenization of \eqref{scalarproblem0} (and its parabolic and hyperbolic analogues in Sections \ref{kjsdkjhdkjshkd} and \ref{sechyperbolic}), but not in the classical sense, i.e., that  of asymptotic analysis \cite{BeLiPa78} or that of $G$ or $H$-convergence (\cite{Mur78}, \cite{MR0240443,Gio75}) in which one considers a sequence of operators $-\diiv(a_\epsilon \nabla)$ and seeks to characterize limits of solution. We are interested in the homogenization of \eqref{scalarproblem0} in the sense of ``numerical homogenization,'' i.e., that of the approximation of the solution space of \eqref{scalarproblem0}  by a finite-dimensional space.

This approximation is not based on concepts of scale separation and/or of ergodicity but on compactness properties, i.e., the fact that the unit ball of the solution space is compactly embedded into $H^1_0(\Omega)$ if source terms ($g$) are integrable enough.
This higher integrability condition on $g$ is  necessary because
 if $g$ spans $H^{-1}(\Omega)$, then the solution space of \eqref{scalarproblem0} is $H^1_0(\Omega)$ (and it is not possible to obtain a finite dimensional approximation subspace of $H^1_0(\Omega)$ with arbitrary accuracy in $H^1$-norm). However,
if $g$ spans the unit ball of $L^2(\Omega)$, then the solution space of \eqref{scalarproblem0} shrinks to a compact subset of $H^1_0(\Omega)$ that can be approximated to an arbitrary accuracy in $H^1$-norm by finite-dimensional spaces \cite{BerlyandOwhadi10} (observe that if $a=I_d$, then the solution space is a closed bounded subset of $H^2\cap H^1_0(\Omega)$, which is known to be compactly embedded into $H^1_0(\Omega)$).

The  identification of localized bases spanning accurate approximation spaces relies on a transfer property obtained  in \cite{BerlyandOwhadi10}. For the sake of completeness, we will give a short reminder of that property in Section \ref{transferproperty}.
In Section \ref{jshgjhgdshjghdjhe}, we will construct localized approximation bases with rigorous error estimates (under no further assumptions on $a$ than those given above).
In Sub-section \ref{highcontrast}, we will also address the high-contrast scenario in which $\lambda_{\max}(a)$ is allowed to be large.
In Sections \ref{kjsdkjhdkjshkd} and \ref{sechyperbolic}, we will show that the approximation spaces obtained by solving localized elliptic PDEs remain accurate for parabolic and hyperbolic time-dependent  problems. We refer to Section \ref{numexperiments} for numerical experiments. We refer to Section \ref{compactness} of the Appendix for further discussion and a proof of the strong compactness of the solution space when the range of $g$ is a closed bounded subset of $H^{-\nu}(\Omega)$ with $\nu<1$ (this notion of strong compactness constitutes a simple but fundamental link between classical homogenization, numerical homogenization and reduced order modeling).

\section{A reminder on the flux-norm and the transfer property.}\label{transferproperty}
Recall that the key element in $G$ and $H$ convergence is a notion of ``compactness by compensation'' combined with convergence of fluxes.
Here, the notion of compactness is combined with a flux-norm introduced in \cite{BerlyandOwhadi10}.

\paragraph{The flux-norm.} We will now give a short reminder on the flux-norm and its properties.
\begin{Definition}\label{defWeyl}
 For
 $k\in
(L^2(\Omega))^d$, denote by $k_{pot}$
 the potential portion of the Weyl-Helmholtz decomposition of $k$. Recall that $k_{pot}$ is the orthogonal projection of $k$ onto
$\{\nabla f\;:\; f\in H^1_0\Omega)\}$ in $(L^2(\Omega))^d$.
\end{Definition}

\begin{Definition}\label{FluxNorm}
For $\psi \in H^1_0(\Omega)$, define
\begin{equation}\label{lakdlkjkjd3}
\|\psi\|_{a\f}:=\|(a \nabla \psi)_{pot}\|_{(L^2(\Omega))^d}.
\end{equation}
We  call $\|\psi\|_{a\f}$ the flux-norm of $\Psi$.
\end{Definition}
The following proposition  shows that the flux-norm is equivalent to the energy norm if $\lambda_{\min}(a)>0$ and
$\lambda_{\min}(a)<\infty$.

\begin{Proposition}\label{Prop0} {\rm [Proposition 2.1 of \cite{BerlyandOwhadi10}]}
$\|.\|_{a\f}$ is a norm on $H^1_0(\Omega)$. Furthermore, for all $\psi
\in H^1_0(\Omega)$
\begin{equation}\label{skkshg}
\lambda_{\min}(a) \|\nabla \psi\|_{(L^2(\Omega))^d} \leq
\|\psi\|_{a\f}\leq \lambda_{\max}(a) \|\nabla \psi\|_{(L^2(\Omega))^d}.
\end{equation}
\end{Proposition}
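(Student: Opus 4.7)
The plan is to exploit that $(a\nabla\psi)_{pot}$ is, by Definition \ref{defWeyl}, the orthogonal projection of $a\nabla\psi$ in $(L^2(\Omega))^d$ onto the closed subspace $G:=\{\nabla f : f\in H^1_0(\Omega)\}$. Both inequalities in \eqref{skkshg} will come directly from this projection characterization together with the pointwise spectral bounds \eqref{skjdhskhdkh3e} on $a(x)$; the norm axioms then follow from the lower bound together with the linearity of the map $\psi\mapsto (a\nabla\psi)_{pot}$.

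First I would handle the upper bound. Since orthogonal projection in a Hilbert space is a contraction, $\|(a\nabla\psi)_{pot}\|_{(L^2)^d}\le \|a\nabla\psi\|_{(L^2)^d}$. Then the symmetry of $a(x)$ together with the pointwise bound $\xi^T a(x)^2\xi \le \lambda_{\max}(a)^2|\xi|^2$ (which follows from \eqref{skjdhskhdkh3e} applied to the eigenbasis of $a(x)$) yields $\|a\nabla\psi\|_{(L^2)^d}\le \lambda_{\max}(a)\|\nabla\psi\|_{(L^2)^d}$, giving the right-hand inequality of \eqref{skkshg}.

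The key step, and the only one that is not routine, is the lower bound. The observation is that $\nabla\psi\in G$ since $\psi\in H^1_0(\Omega)$, so we can test the projection against $\nabla\psi$ itself. By definition of orthogonal projection,
\begin{equation*}
\int_\Omega (a\nabla\psi)_{pot}\cdot \nabla\psi\, dx \;=\; \int_\Omega a\nabla\psi\cdot \nabla\psi\, dx \;\ge\; \lambda_{\min}(a)\|\nabla\psi\|_{(L^2)^d}^2,
\end{equation*}
using \eqref{skjdhskhdkh3e} in the last step. Applying Cauchy--Schwarz to the left-hand side gives $\lambda_{\min}(a)\|\nabla\psi\|_{(L^2)^d}^2 \le \|(a\nabla\psi)_{pot}\|_{(L^2)^d}\|\nabla\psi\|_{(L^2)^d}$, which after dividing by $\|\nabla\psi\|_{(L^2)^d}$ (harmless, since otherwise both sides vanish) yields the left-hand inequality of \eqref{skkshg}.

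Finally, to see that $\|\cdot\|_{a\f}$ is a norm on $H^1_0(\Omega)$: absolute homogeneity and the triangle inequality are inherited from $\|\cdot\|_{(L^2)^d}$ via the linearity of $\psi\mapsto (a\nabla\psi)_{pot}$; and positive-definiteness is immediate from the lower bound just proved combined with the Poincar\'e inequality on $H^1_0(\Omega)$, since $\|\psi\|_{a\f}=0$ forces $\nabla\psi=0$ and hence $\psi=0$. The main (and really only) obstacle is recognizing that $\nabla\psi$ lies in the subspace onto which we project, turning the projection identity into the quadratic form $\int a\nabla\psi\cdot\nabla\psi$ to which ellipticity applies.
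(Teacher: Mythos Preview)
Your proof is correct. Note, however, that the present paper does not supply its own proof of this proposition: it is stated as a reminder and attributed to Proposition~2.1 of \cite{BerlyandOwhadi10}, so there is no in-paper argument to compare against. Your approach---using that orthogonal projection is a contraction for the upper bound, and testing the projection identity against $\nabla\psi\in G$ combined with Cauchy--Schwarz and ellipticity for the lower bound---is the natural one and matches the standard argument.
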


\paragraph{Motivations behind the flux-norm:}
There are three main motivations behind the introduction of the flux norm.
\begin{itemize}
\item The flux-norm allows to obtain approximation error estimates independent from both the minimum and maximum eigenvalues of $a$. In fact, the flux-norm of the solution of \eqref{scalarproblem0} is independent from $a$ altogether since
    \begin{equation}\label{skdcrrvrvkshg}
\|u\|_{a\f}=\|\nabla \Delta^{-1} g\|_{(L^2(\Omega))^d}.
\end{equation}
\item The   $(\cdot)_{\text{pot}}$ in the  $a\f$-norm is explained by the fact that in practice, we are interested in fluxes (of heat, stress, oil, pollutant) entering or exiting a given domain. Furthermore, for a vector field $\xi$, $\int_{\partial\Omega}\xi\cdot n ds=\int_{\Omega}\text{div}(\xi_{\text{pot}})dx$, which means that the flux entering or exiting is determined by the potential part of the vector field.
    \item Classical homogenization is associated with two types of convergence: convergence of energies ($\Gamma$-convergence \cite{MR630747, MR1968440}) and  convergence of fluxes ($G$ or $H$-convergence \cite{Mur78,Gio75, MR0477444, MR0240443, MR506997}). Similarly, one can define an energy norm and a flux-norm.
\end{itemize}

\paragraph{The transfer property.}

For $V$, a finite dimensional linear subspace of $H^1_0(\Omega)$, we define
\begin{equation}\label{ksjjseddesel3}
(\diiv a \nabla V):=\operatorname{span} \{\diiv(a\nabla v)\,:\,  v\in V\}.
\end{equation}
Note that $(\diiv a \nabla V)$ is a finite dimensional subspace of $H^{-1}(\Omega)$.
\smallskip

\begin{Theorem} \label{sdjhskjdhskdhkhje}{\bf (Transfer property of the flux norm)} {\rm [Theorem 2.1 of \cite{BerlyandOwhadi10}]}
 Let $V'$ and $V$ be finite-dimensional subspaces of $H^1_0(\Omega)$. For $f\in L^2(\Omega)$, let $u$ be the solution of
\eref{scalarproblem0} with conductivity $a$ and  $u'$ be the solution of
\eref{scalarproblem0} with conductivity $a'$. If $(\diiv a \nabla V)=(\diiv a' \nabla V')$, then
\begin{equation}\label{sidasasedsssddaud}
 \inf_{v\in V}  \frac{ \|u - v\|_{a\f}}{
\|g\|_{L^2(\Omega)}}= \inf_{v\in V'}  \frac{ \|u' - v\|_{a'\f}}{
\|g\|_{L^2(\Omega)}}.
\end{equation}
\end{Theorem}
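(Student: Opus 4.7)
The plan is to recast both sides of \eqref{sidasasedsssddaud} as infima of the same functional over the same subset of $L^2(\Omega)$, at which point the equality is immediate from the hypothesis. The engine behind this reduction is the identity \eqref{skdcrrvrvkshg}, applied not to $u$ itself but to the residual $u-v$.

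First I would fix $v\in V$ and compute $(a\nabla(u-v))_{\mathrm{pot}}$. By the Weyl--Helmholtz decomposition, $(a\nabla\psi)_{\mathrm{pot}}=\nabla\phi$ where $\phi\in H^1_0(\Omega)$ is characterized by $\Delta\phi=\diiv(a\nabla\psi)$, i.e.\ $(a\nabla\psi)_{\mathrm{pot}}=\nabla\Delta^{-1}\diiv(a\nabla\psi)$ with $\Delta^{-1}$ the Dirichlet inverse Laplacian. Applied to $\psi=u-v$ and using $-\diiv(a\nabla u)=g$, this yields
\begin{equation*}
\|u-v\|_{a\f}=\bigl\|\nabla\Delta^{-1}\bigl(g+\diiv(a\nabla v)\bigr)\bigr\|_{(L^2(\Omega))^d}.
\end{equation*}

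Next I would take the infimum over $v\in V$. The linear map $v\mapsto\diiv(a\nabla v)$ sends $V$ onto a linear subspace of $H^{-1}(\Omega)$, which by Definition \eqref{ksjjseddesel3} is exactly $(\diiv a\nabla V)$. Hence
\begin{equation*}
\inf_{v\in V}\|u-v\|_{a\f}=\inf_{w\in(\diiv a\nabla V)}\bigl\|\nabla\Delta^{-1}(g+w)\bigr\|_{(L^2(\Omega))^d}.
\end{equation*}
The identical computation with $a'$, $u'$, $V'$ in place of $a$, $u$, $V$ gives the analogous formula with $(\diiv a'\nabla V')$ on the right. The hypothesis $(\diiv a\nabla V)=(\diiv a'\nabla V')$ then forces the two infima to coincide, and dividing by $\|g\|_{L^2(\Omega)}$ gives \eqref{sidasasedsssddaud}.

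There is no real obstacle here; the content is entirely in choosing the right representation. The only point that deserves a line of verification is the identity $(a\nabla\psi)_{\mathrm{pot}}=\nabla\Delta^{-1}\diiv(a\nabla\psi)$: it follows because $\nabla\Delta^{-1}\diiv(a\nabla\psi)$ is by construction a gradient of an $H^1_0$ function, and the difference $a\nabla\psi-\nabla\Delta^{-1}\diiv(a\nabla\psi)$ is divergence-free in the sense of distributions (hence lies in the orthogonal complement of $\{\nabla f:f\in H^1_0(\Omega)\}$ in $(L^2(\Omega))^d$), so the two summands realize the Weyl--Helmholtz decomposition. Everything else is bookkeeping.
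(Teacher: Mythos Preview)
Your argument is correct. The paper does not actually give its own proof of this theorem: it is stated as Theorem~2.1 of \cite{BerlyandOwhadi10} and immediately followed by a discussion of its consequences, with no proof environment. So there is nothing in the present paper to compare against.

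That said, your route is exactly the natural one and is the argument in \cite{BerlyandOwhadi10}: rewrite $\|u-v\|_{a\f}$ via the identity $(a\nabla\psi)_{\mathrm{pot}}=\nabla\Delta^{-1}\diiv(a\nabla\psi)$, use $-\diiv(a\nabla u)=g$ to obtain $\|u-v\|_{a\f}=\|\nabla\Delta^{-1}(g+\diiv(a\nabla v))\|_{(L^2)^d}$, and observe that the right-hand side sees $a$ and $V$ only through the linear subspace $(\diiv a\nabla V)$. One tiny cosmetic point: after substituting $\diiv(a\nabla u)=-g$ you get $-g-\diiv(a\nabla v)$ inside $\Delta^{-1}$, not $g+\diiv(a\nabla v)$; since you then take a norm and infimize over a linear space this is immaterial, but it is worth a word to avoid the appearance of a sign slip. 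Your closing verification of the Weyl--Helmholtz identity is correct and is the only step that merits justification.
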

The usefulness of \eqref{sidasasedsssddaud} can be illustrated by considering $a'=I$ so that
$\diiv a' \nabla = \Delta$. Then, $u' \in H^2$ and therefore  $V'$  can be chosen as, e.g.,  the standard piecewise linear  FEM space, on a regular triangulation of $\Omega$ of resolution $h$, with nodal basis $\{\phi_i\}$. The space $V$ is then defined by its basis $\{\theta_i\}$ determined by
\begin{equation}\label{lakjlkajlei23}
\begin{cases}
\diiv (a \nabla \theta_i) = \Delta \phi_i \quad &\text{in}\quad \Omega\\
\theta_i=0 &\text{on} \quad \partial \Omega.
\end{cases}
\end{equation}
 Equation \eqref{sidasasedsssddaud}  shows that  the approximation error estimate  associated with the space $V$ and the problem with arbitrarily rough coefficients is (in $a$-flux norm)  equal to  the approximation error estimate  associated with piecewise linear elements and the space $H^2(\Omega)$. More precisely,
\begin{equation}\label{sidasasedsssddaudjo}
\sup_{g \in L^2(\Omega)} \inf_{v\in V}  \frac{ \|u - v\|_{a\f}}{
\|g\|_{L^2(\Omega)}}\leq C h,
\end{equation}
where $C$ does not depend on $a$.

We refer to  \cite{ChuHou09}, \cite{EffGaWu09} and \cite{MR1771781} for recent results on  finite element methods for high contrast ($\lambda_{\max}(a)/\lambda_{\min}(a)>>1$) but non-degenerate ($\lambda_{\min}(a)=\mathcal{O}(1)$) media under specific assumptions on the morphology of the (high-contrast) inclusions (in \cite{ChuHou09}, the mesh has to be adapted to the morphology of the inclusions). Observe that the  proposed method remains accurate if the medium is both of high contrast and degenerate ($\lambda_{\min}(a)<<1$), without any further limitations on $a$, at the cost of solving PDEs \eqref{lakjlkajlei23} over the whole domain $\Omega$.

\begin{Remark}\label{rmkexpconv}
We refer to \cite{BerlyandOwhadi10} for the optimal constant $C$ in \eqref{sidasasedsssddaudjo}.
 This question of optimal approximation with respect to a linear finite dimensional space is related to  the Kolmogorov n-width \cite{Pinkus85, MR1766938}, which measures how accurately a given set of functions can be approximated by linear spaces of dimension $n$ in a given norm. A surprising result of the theory of n-widths is the non-uniqueness of the space realizing the optimal approximation \cite{Pinkus85}.
 Observe also that, as another consequence of the transfer property \eqref{sidasasedsssddaud},  a $h^{k+1}$ rate of convergence can be achieved in \eqref{sidasasedsssddaudjo} by replacing $\phi_i$ with higher-order basis functions in \eqref{lakjlkajlei23}, and $\|g\|_{L^2}$ with $\|g\|_{H^k}$ in \eqref{sidasasedsssddaudjo}. Similarly an exponential rate of convergence can be achieved if the source terms $g$ are analytic. This is the reason behind the near exponential rate of convergence observed in \cite{BaLip10} for harmonic functions (i.e., with zero source terms, and particular ``buffer'' solutions computed near the boundary) and bounded (non high) contrast media.
\end{Remark}

\section{Localization of the transfer property.}\label{jshgjhgdshjghdjhe}

The elliptic PDEs \eqref{lakjlkajlei23} have to be solved on the whole domain $\Omega$. Is it possible to localize the computation of the basis elements $\theta_i$ to a neighborhood of the support of the elements $\phi_i$?
Observe that the support of each $\phi_i$ is contained in a ball $B(x_i, C\,h)$ of center $x_i$ (the node of the coarse mesh associated
with $x_i$) of radius $C\,h$. Let $0<\alpha \leq 1$. Solving the PDEs \eqref{lakjlkajlei23} on sub-domains of $\Omega$ (containing the support of $\phi_i$) may, a priori, increase the error estimate in the right hand side of \eqref{sidasasedsssddaud}. This increase can, in fact, be linked to the decay of the Green's function of the operator $-\diiv (a\nabla)$. The slower the decay, the larger the degradation of those approximation error estimates. Inspired by the strategy used in \cite{Gloria10} for controlling cell resonance errors in the computation of the effective conductivity of periodic or stochastic homogenization (see also \cite{GloriaOtto10, MR712714, MR867870}), we will replace the operator $-\diiv(a\nabla)$ by the operator $\frac{1}{T}-\diiv(a\nabla)$ in the left hand side of \eqref{lakjlkajlei23} in order to artificially introduce an exponential decay in the Green's function. A fine tuning of $T$ is required because although a decrease in $T$ improves the decay of the Green function, it also deteriorates the accuracy of the transfer property.
In order to limit this deterioration, we will transfer a vector space with a higher approximation order than the one associated with piecewise linear elements. Let us now give the main result.

\subsection{Localized bases functions.}\label{jhsgdjhdghgdhed}
Let $h\in (0,1)$. Let $X_h$ be an approximation sub-vector space of $H^1_0(\Omega)$ such that
\begin{itemize}
\item $X_h$ is spanned by basis functions $(\varphi_i)_{1\leq i \leq N}$ (with $N=\mathcal{O}(|\Omega|/ h^d)$) with supports in $B(x_i,C \,h)$ where, the $x_i$ are the nodes of a regular triangulation of $\Omega$ of resolution $h$.
\item $X_h$ satisfies the following approximation properties:
For all $f\in H^1_0(\Omega)\cap H^2(\Omega)$
\begin{equation}\label{Xprop1}
\inf_{v \in X_h} \|f-v\|_{H^1_0(\Omega)}\leq C \,h \|f\|_{H^2(\Omega)},
\end{equation}
and for all $f\in H^1_0(\Omega)\cap H^3(\Omega)$
\begin{equation}\label{Xprop2}
\inf_{v \in X_h} \|f-v\|_{H^1_0(\Omega)}\leq C \,h^2 \|f\|_{H^3(\Omega)}.
\end{equation}
\item For all $i$,
\begin{equation}\label{Controlphi}
\int_{\Omega} |\nabla \varphi_i|^2 \leq  C h^{d-2}.
\end{equation}

\item For all coefficients $c_i$,
\begin{equation}\label{Controlphi2sum}
h^{d} \sum_i   c_i^2  \leq   C  \|\sum_{i} c_i \nabla \varphi_i\|_{L^2(\Omega)}^2.
\end{equation}
\end{itemize}
\begin{Remark}
Examples of such spaces can be found in \cite{BrSc02} and constructed using piecewise quadratic polynomials. From the first bullet point it follows that $h$ can be though of as the diameter of the support of the elements $\varphi_i$.
The largest parameter $h^{d}/C$ satisfying \eqref{Controlphi2sum} is the minimal eigenvalue of the stiffness matrix $(\int_{\Omega} (\nabla \varphi_i)^T \nabla \varphi_j )_{1\leq i,j\leq N}$ and Condition \eqref{Controlphi2sum} is obtained   from the regularity of the tessellation of $\Omega$.
In fact,  the proof of Proposition \ref{jhgsjhgdg6354r5df} shows that Condition \eqref{Controlphi2sum}  can be relaxed to
the assumption of existence of a constant $d_{\varphi}>0$ independent from $h$ such that for all coefficients $c_i$
\begin{equation}\label{Controlphi2sumalternate}
h^{d_{\varphi}} \sum_i   c_i^2  \leq   C  \|\sum_{i} c_i \nabla \varphi_i\|_{L^2(\Omega)}^2.
\end{equation}
\end{Remark}

Through this paper, we will write $C$ any constant that does not depend on $h$ (but may depend on $d$, $\Omega$, and the essential supremum and infimum of the maximum and minimum eigenvalues of $a$ over $\Omega$).
Let $\alpha \in (0,1)$ and $C_1>0$. For each basis element $\varphi_i$ of $X_h$ let $\psi_i$ be the solution of
\begin{equation}\label{loclakjlkajlei23}
\begin{cases}
h^{-2\alpha}\psi_i-\diiv (a \nabla \psi_i) = \Delta \varphi_i \quad &\text{in}\quad B(x_i,C_{1}  h^\alpha \ln \frac{1}{h})\cap \Omega\\
\psi_i=0 &\text{on} \quad \partial \big(B(x_i,C_1 h^\alpha \ln \frac{1}{h})\cap \Omega\big).
\end{cases}
\end{equation}
Let
\begin{equation}\label{Vh}
V_h:=\operatorname{span}(\psi_i)
\end{equation}
be the linear space spanned by the elements $\psi_i$.
\begin{Theorem}\label{thnloc}
For $g\in L^2(\Omega)$, let $u$ be the solution of \eqref{scalarproblem0} in $H^1_0(\Omega)$ and $u_h$ the solution of \eqref{scalarproblem0} in $V_h$.
There exists $C_0>0$ such that for $C_1 \geq C_0$, we have
\begin{equation}\label{ksahskjdhkhwsj}
\frac{\|u-u_h\|_{H^1_0(\Omega)}}{\|g\|_{L^2(\Omega)}}\leq \begin{cases} C h \quad &\text{if}\quad \alpha \in (0,\frac{1}{2}]\\
 C h^{2-2\alpha} \quad &\text{if}\quad \alpha \in [\frac{1}{2},1), \end{cases}
\end{equation}
where the constants $C$ and $C_0$ depend on $a$, $d$, $\Omega$ but not on $h$.
\end{Theorem}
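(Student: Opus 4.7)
The plan is to invoke Céa's lemma---the bilinear form is coercive, so the Galerkin solution $u_h$ in $V_h$ is quasi-optimal---and then exhibit a single trial function $v_h\in V_h$ with $\|u-v_h\|_{H^1_0}\leq Ch^{\min(1,\,2-2\alpha)}\|g\|_{L^2}$. The construction proceeds in three stages: first identify the correct target in $X_h$, then bound the global penalized approximation error, and finally control the localization error through Agmon-exponential decay of the penalized Green's function. The key insight for the first stage is that the natural target is not $-\Delta^{-1}g$ (which would recover only the $h$ rate, as in the unpenalized transfer of Section \ref{transferproperty}) but a ``penalty-corrected'' function that absorbs the artificial mass $h^{-2\alpha}$.

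Accordingly, I would define $\phi^*\in H^1_0(\Omega)$ by $\Delta\phi^*=g+h^{-2\alpha}u$, so that $\phi^*=-\tilde u+h^{-2\alpha}\Delta^{-1}u$ with $-\Delta\tilde u=g$. The first summand lies in $H^2\cap H^1_0$ with $\|\tilde u\|_{H^2}\leq C\|g\|_{L^2}$; the second lies in $H^3\cap H^1_0$ with $\|\Delta^{-1}u\|_{H^3}\leq C\|u\|_{H^1}\leq C\|g\|_{L^2}$, thanks to the a priori bound for \eqref{scalarproblem0} and elliptic regularity on the smooth domain $\Omega$. Using \eqref{Xprop1} on $-\tilde u$ and \eqref{Xprop2} on $\Delta^{-1}u$, I pick $\varphi=\varphi_1+h^{-2\alpha}\varphi_2\in X_h$ so that
\begin{equation*}
\|\phi^*-\varphi\|_{H^1_0}\leq C\bigl(h+h^{2-2\alpha}\bigr)\|g\|_{L^2}\leq Ch^{\min(1,\,2-2\alpha)}\|g\|_{L^2}.
\end{equation*}
Writing $\varphi=\sum_i c_i\varphi_i$ and setting $\hat v:=\sum_i c_i\hat\psi_i$ for the \emph{global} penalized solutions $\hat\psi_i\in H^1_0(\Omega)$ of $h^{-2\alpha}\hat\psi_i-\diiv(a\nabla\hat\psi_i)=\Delta\varphi_i$, linearity together with $\Delta\phi^*=g+h^{-2\alpha}u$ and $-\diiv(a\nabla u)=g$ gives $h^{-2\alpha}(\hat v-u)-\diiv(a\nabla(\hat v-u))=\Delta(\varphi-\phi^*)$. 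Testing against $\hat v-u$ and integrating by parts then yields $\|\hat v-u\|_{H^1_0}\leq C\|\nabla(\phi^*-\varphi)\|_{L^2}\leq Ch^{\min(1,\,2-2\alpha)}\|g\|_{L^2}$.

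For the localization step I set $v_h:=\sum_i c_i\psi_i\in V_h$. The operator $h^{-2\alpha}-\diiv(a\nabla)$ has a Green's function that decays exponentially at length scale $\mathcal{O}(h^\alpha)$, and an iterated Caccioppoli argument on $\hat\psi_i$ (whose source $\Delta\varphi_i$ is supported in a ball of radius $\mathcal{O}(h)$ around $x_i$) should produce $\|\hat\psi_i\|_{H^1(\Omega\setminus B(x_i,C_1 h^\alpha\ln(1/h)))}\leq Ch^{C_1/C}\|\nabla\varphi_i\|_{L^2}$ for some $C$ depending only on the eigenvalue bounds of $a$. A standard boundary-data comparison on $B(x_i,C_1 h^\alpha\ln(1/h))\cap\Omega$ then gives $\|\hat\psi_i-\psi_i\|_{H^1(\Omega)}\leq Ch^{C_1/C}\|\nabla\varphi_i\|_{L^2}$. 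The bound $\sum_i c_i^2\leq Ch^{-d}\|\nabla\varphi\|_{L^2}^2\leq Ch^{-d-4\alpha}\|g\|_{L^2}^2$ obtained from \eqref{Controlphi2sum} and $\|\nabla\varphi\|_{L^2}\leq Ch^{-2\alpha}\|g\|_{L^2}$, combined with $N=\mathcal{O}(h^{-d})$, shows that for $C_1$ exceeding a threshold $C_0$ depending only on $d,\alpha$ and $a$ the localization error $\|\hat v-v_h\|_{H^1_0}$ is bounded by $Ch^{2-2\alpha}\|g\|_{L^2}$ (and a fortiori by $Ch\|g\|_{L^2}$). A triangle inequality closes the argument.

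I expect the hard part to be the Agmon/Caccioppoli estimate for $\hat\psi_i-\psi_i$: establishing $h^\alpha$-scale exponential decay uniformly in $h$ for rough $L^\infty$ coefficients $a$ (a delicate issue in the high-contrast regime treated later in the paper), and summing localization errors while accounting for the potentially $\mathcal{O}(h^{-2\alpha})$-magnitude contribution of the $h^{-2\alpha}\varphi_2$ piece to the coefficients $c_i$. The novel element of the argument is the use of \eqref{Xprop2} on the $h^{-2\alpha}\Delta^{-1}u$ piece of $\phi^*$: this is precisely what converts the penalization---whose naive effect is an $\mathcal{O}(1)$ error---into the controlled $h^{2-2\alpha}$ contribution for $\alpha>1/2$.
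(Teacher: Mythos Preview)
Your proposal is correct and follows essentially the same route as the paper. Your penalty-corrected target $\phi^*=\Delta^{-1}(g+h^{-2\alpha}u)$, split into an $H^2$ piece handled by \eqref{Xprop1} and an $H^3$ piece $h^{-2\alpha}\Delta^{-1}u$ handled by \eqref{Xprop2}, is exactly the content of the paper's Proposition~\ref{jhjhejheg} (there the splitting is written as $w_1,w_2$ with $\Delta w_1=g-\sum c_{i,1}\Delta\varphi_i$ and $\Delta w_2=u/T-\sum c_{i,2}\Delta\varphi_i$); your coefficient bound $\sum_i c_i^2\leq Ch^{-d-4\alpha}\|g\|_{L^2}^2$ matches \eqref{eqci2}, and the localization step you outline via Agmon-type exponential decay plus iterated Caccioppoli is precisely Proposition~\ref{jhgsjhgdg6354r5df}, proved in the Appendix through the Green's function bound of Lemma~\ref{controlG} and the Caccioppoli Lemma~\ref{LemCaccio}.
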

\begin{Remark}
Theorem \ref{thnloc} shows the convergence rate in approximation error remains optimal (i.e., proportional to $h$) after  localization  if $0<\alpha \leq 1/2$ and decays to $0$ as $h^{2-2\alpha}$ for $\frac{1}{2}\leq \alpha <1$. In particular, choosing localized domains with radii  $\mathcal{O}(\sqrt{h} \ln \frac{1}{h})$ is sufficient to obtain the optimal convergence rate $\mathcal{O}(h)$. Observe that the choice of the constant $\alpha$ in equation \eqref{loclakjlkajlei23} is arbitrary.
\end{Remark}
\begin{Remark}
According to Theorem \ref{thnloc}, the constant $C_1$ in \eqref{loclakjlkajlei23} needs to be chosen larger than $C_0$ to achieve the convergence rate $h+h^{2-2\alpha}$. The constant $C_0$ depends on $\alpha$, $d$, $\lambda_{\min}(a)$ and $\lambda_{\max}(a)$.  The constant $C$ in the right hand side of \eqref{ksahskjdhkhwsj} also depends on $\alpha$, $d$, $\lambda_{\min}(a)$ and $\lambda_{\max}(a)$. It is possible to give an explicit value for $C_0$ and $C$ by tracking constants in the proof (in particular, as stated in Subsection \ref{highcontrast}, the dependence on $\lambda_{\max}(a)$ can be removed if the elements $\Psi_i$ are computed on sub-domains with added buffer zones around high-conductivity inclusions).
\end{Remark}
\begin{Remark}
If one uses piecewise linear basis elements instead of the elements $\varphi_i$ (i.e., in the absence of property \eqref{Xprop2}), then the estimate in the right hand side of \eqref{ksahskjdhkhwsj} deteriorates to $h^{1-2\alpha}$. The proof of this remark is similar to that of Theorem \ref{thnloc}. The main modification lies in replacing $h^2/T$ by $h/T$ in equations \eqref{hhgsfhdgfhfd} and \eqref{jadeesehjhguey3}.
\end{Remark}
\begin{Remark}
One could use piecewise linear basis elements instead of the elements $\varphi_i$, and also remove
the term $h^{-2\alpha}\psi_i$ from the transfer property \eqref{loclakjlkajlei23}. In this situation, we numerically observe a rate of convergence of $h$ for periodic, stochastic and low-contrast media after localization of \eqref{loclakjlkajlei23} to balls of radii $\mathcal{O}(h)$.
 In these \emph{particular} situations  (characterized by short range correlations in $a$), the term $h^{-2\alpha}\psi_i$
should be avoided to obtain the optimal convergence rate $h$ after localization to sub-domains of size $\mathcal{O}(h)$. In that sense,  the estimate in the right hand side of \eqref{ksahskjdhkhwsj} corresponds to a \emph{worst case scenario} with respect to the medium $a$ (characterized by long range correlations), requiring the introduction of the term $h^{-1}\psi_i$ and a localization to sub-domains of size $\mathcal{O}(\sqrt{h} \ln \frac{1}{h})$ for the optimal convergence rate $h$.
\end{Remark}

\begin{Remark}
For the elliptic problem, computational gains result from localization (the elements $\psi_i$ are computed on sub-domains $\Omega_i$ of $\Omega$), parallelization (the elements $\psi_i$ can be computed independently from each other), and the fact that the same basis can be used for different right hand sides $g$ in \eqref{scalarproblem0}.
Computational gains are even more significant for time-dependent problems because,  once an accurate basis has been determined for the elliptic problem, the same  basis  can be used for the associated (parabolic and hyperbolic) time-dependent problems with the same accuracy (we refer to Sections \ref{kjsdkjhdkjshkd}  and \ref{sechyperbolic}). For the wave equation with rough bulk modulus and density coefficients,
the proposed method (based on pre-computing basis elements as solutions of localized elliptic PDEs) remains accurate, provided that high frequencies are not strongly excited ($\partial_t g \in L^2$).
\end{Remark}

\paragraph{On Localization.}
We refer to  \cite{ChuHou09}, \cite{EffGaWu09} and \cite{BaLip10} for recent localization results for divergence-form elliptic PDEs.
The strategy of \cite{ChuHou09} is to construct triangulations and finite element bases that are adapted to the shape of high conductivity inclusions via coefficient dependent boundary conditions for the subgrid problems (assuming $a$ to be piecewise constant and the number of inclusions bounded).
The strategy of \cite{EffGaWu09} is to solve local eigenvalue problems, observing that only a few eigenvectors are sufficient to obtain a good pre-conditioner. Both \cite{ChuHou09} and \cite{EffGaWu09}  require specific assumptions on the morphology and number of inclusions. The idea of the strategy is to observe that if $a$ is piecewise constant and the number of inclusions bounded, then $u$ is locally $H^2$ away from the interfaces of the inclusions. The inclusions can then be taken care of by adapting the mesh and the boundary values of localized problems or by observing that those inclusions will affect only a finite number of eigenvectors.

The strategy of \cite{BaLip10} is to construct Generalized Finite Elements
 by partitioning the computational domain  into to a collection of preselected subsets and compute optimal local bases (using the concept of $n$-widths \cite{MR774404}) for the approximation of harmonic functions. Local bases are constructed by solving local eigenvalue problems (corresponding to computing eigenvectors of $P^* P$ where $P$ is the restriction of $a$-harmonic functions from $\omega^*$ onto $\omega\subset \omega^*$, $P^*$ is the adjoint of $P$, and $\omega$ is a sub-domain of $\Omega$ surrounded by a larger sub-domain $\omega^*$).
 The method proposed in \cite{BaLip10} achieves a near exponential convergence rate (in the number of pre-computed bases functions) for harmonic functions. Non-zero right hand sides ($g$) are then taken care of by solving (for each different $g$) particular solutions on preselected subsets with a constant  Neumann boundary condition (determined according to the consistency condition).

As explained in Remark \ref{rmkexpconv}, the near exponential rate of convergence observed in \cite{BaLip10} is explained by the fact that the source space considered in \cite{BaLip10} is more regular than $L^2$ (since \cite{BaLip10}  requires the computation particular (local) solutions for each right hand sides $g$ and each non-zero boundary conditions, the basis obtained in \cite{BaLip10} is in fact adapted to $a$-harmonic functions away from the boundary). The strategy proposed here can also be used to achieve exponential convergence for analytic source terms $g$ by employing higher-order basis functions $\varphi_i$ in \eqref{loclakjlkajlei23}.
Furthermore, as shown in sections \ref{kjsdkjhdkjshkd}, \ref{sechyperbolic} and \ref{highcontrast} the method proposed here allows for the numerical homogenization of time-dependent problems (because it does not require the computation of particular solutions for different source or boundary terms) and can be extended to high-contrast media.
We also note that the basis functions $\psi_i$ are simpler and cheaper to compute  (equation \eqref{loclakjlkajlei23}) than the eigenvectors
of $P^* P$ required by \cite{BaLip10}. We refer to page 16 of  \cite{BaLip10} for a discussion on the cost of this added complexity.

\subsection{On Numerical Homogenization.}

By now, the field of numerical homogenization has become large enough that it is not possible to give an exhaustive review in this short paper.
Therefore, we will restrict our attention to
works directly related to our work.

-  The multi-scale finite element method \cite{MR1455261, MR1898136, MR1642758} can be seen as a numerical generalization of this idea of oscillating test functions found in $H$-convergence. A convergence analysis for periodic media revealed a resonance error introduced by the microscopic boundary condition \cite{MR1455261, MR1642758}. An over-sampling technique was proposed to  reduce the resonance error \cite{MR1455261}.

- Harmonic coordinates play an important role in various homogenization approaches, both theoretical and numerical. These coordinates  were  introduced in \cite{MR542557} in the context of random homogenization. Next, harmonic coordinates have been used in one-dimensional and quasi-one-dimensional divergence form elliptic problems  \cite{ BabOsb83, MR1286212}, allowing for efficient finite
dimensional approximations. The connection of these coordinates with classical homogenization is made explicit in \cite{AllBri05} in the context of multi-scale finite element methods.
The idea of using particular solutions  in numerical homogenization to approximate the solution space of \eqref{scalarproblem0} appears to  have been first proposed in reservoir modeling in the 1980s \cite{BraWu09}, \cite{WhHo87} (in which a global scale-up method was introduced based on generic flow solutions i.e., flows calculated from generic boundary conditions). Its rigorous mathematical analysis was done
only recently \cite{MR2292954} and is based on the fact that solutions are in fact $H^2$-regular with respect to harmonic coordinates  (recall that they are $H^1$-regular with respect to Euclidean coordinates). The main message here is that if the right hand side of \eqref{scalarproblem0} is in $L^2$, then solutions can be approximated at small scales (in $H^1$-norm) by linear combinations of $d$ (linearly independent) particular solutions ($d$ being the dimension of the space). In that sense, harmonic coordinates are only good candidates for being $d$ linearly independent particular solutions.

The idea of a global change of coordinates analogous to harmonic coordinates has been implemented numerically in order to up-scale porous media flows \cite{MR2322432, MR2281625, BraWu09}. We refer, in particular, to a recent review article \cite{BraWu09}   for an overview of some main challenges in reservoir modeling and a description of global scale-up strategies based on generic flows.

- In  \cite{MR2314852, EnSou08}, the structure of the
medium is numerically decomposed into a micro-scale and a
macro-scale (meso-scale) and solutions of cell problems are computed
on the micro-scale, providing local homogenized matrices that are
transferred (up-scaled) to the macro-scale grid. This procedure
allows one to obtain rigorous homogenization results with controlled
error estimates for non-periodic media of the form
$a(x,\frac{x}{\epsilon})$ (where $a(x,y)$ is assumed to be smooth in
$x$ and periodic or ergodic with specific mixing properties in $y$).
Moreover, it is shown that the numerical algorithms associated with HMM and MsFEM can be
implemented for a class of coefficients that is much broader than $a(x,\frac{x}{\epsilon})$. We refer to \cite{AnGlo06} for convergence results on the Heterogeneous Multiscale Method in the framework of $G$ and $\Gamma$-convergence.

- More recent work includes an adaptive projection based method
\cite{MR2399542}, which is consistent with homogenization when there is scale
separation, leading to adaptive algorithms for solving problems with
no clear scale separation;  fast and sparse chaos approximations of
elliptic problems with stochastic coefficients \cite{MR2317004,
MR2399150, DooOwh10}; finite difference approximations of fully nonlinear,
uniformly elliptic PDEs with Lipschitz continuous viscosity
solutions \cite{MR2361302} and operator splitting methods
\cite{MR2342991, MR2231859}.

- We  refer to \cite{MR2334772, MR2284699} (and references therein) for most recent results on homogenization of  scalar divergence-form elliptic operators with stochastic coefficients. Here, the stochastic coefficients $a(x /\ve, \omega)$ are obtained from stochastic deformations (using random diffeomorphisms) of the periodic and stationary ergodic setting.

\subsection{Proof of Theorem \ref{thnloc}.}

For each basis element $\varphi_i$ of $X_h$, let $\psi_{i,T}$ be the solution of
\begin{equation}\label{loclakjlkajlei23TT}
\begin{cases}
\frac{1}{T}\psi_{i,T}-\diiv (a \nabla \psi_{i,T}) = \Delta \varphi_i \quad &\text{in}\quad  \Omega\\
\psi_{i,T}=0 &\text{on} \quad \partial \Omega.
\end{cases}
\end{equation}
The following Proposition will allow us to control the impact of the introduction of the term $\frac{1}{T}$ in the transfer property.
Observe that the domain of PDE \eqref{loclakjlkajlei23TT} is still $\Omega$ (our next step will be to localize it to $\Omega_i \subset \Omega$).

\begin{Proposition}\label{jhjhejheg}
For $g\in L^2(\Omega)$ let $u$ be the solution of \eqref{scalarproblem0} in $H^1_0(\Omega)$.
Then, there exists $v \in \operatorname{span}(\psi_{i,T})$ such that
\begin{equation}\label{hhgsfhdgfhfd}
 \frac{\|u-v\|_{H^1_0(\Omega)}}{\|g\|_{L^2(\Omega)}}\leq C\big(h+\frac{h^2}{T}\big).
\end{equation}
Furthermore, writing $v:=\sum_i c_i \psi_{i,T}$ we have
\begin{equation}\label{eqci2}
\sum_i c_i^2 \leq  C h^{-d} (1+T^{-2}) \|g\|_{L^2(\Omega)}^2
\end{equation}
\end{Proposition}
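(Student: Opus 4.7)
The plan is to exploit the linearity of the transfer property: any combination $v=\sum c_i\psi_{i,T}$ automatically satisfies $\tfrac{1}{T}v-\diiv(a\nabla v)=\Delta\phi$ where $\phi:=\sum c_i\varphi_i\in X_h$, so choosing coefficients $c_i$ is equivalent to choosing $\phi\in X_h$. I would therefore pick $\phi$ as a good $X_h$-approximation of an auxiliary function $w\in H^1_0(\Omega)$ whose Laplacian equals exactly the right-hand side of the error equation for $u-v$.

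Concretely, define $w\in H^2\cap H^1_0(\Omega)$ by $\Delta w=g+\tfrac{1}{T}u$, so that subtracting the equations for $u$ and $v$ yields
\begin{equation*}
\tfrac{1}{T}(u-v)-\diiv(a\nabla(u-v))=\Delta(w-\phi).
\end{equation*}
Testing against $u-v\in H^1_0(\Omega)$, integrating by parts on both sides, using $(a\xi)\cdot\xi\ge\lambda_{\min}(a)|\xi|^2$, and Cauchy--Schwarz gives $\|\nabla(u-v)\|_{L^2}\le\lambda_{\min}(a)^{-1}\|\nabla(w-\phi)\|_{L^2}$. Thus the entire proposition reduces to bounding $\|w-\phi\|_{H^1_0}$ for a suitable $\phi\in X_h$.

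The crucial refinement, which yields $h^2/T$ instead of the naive $h/T$, is to split $w=w_1+w_2$ with $\Delta w_1=g$ and $\Delta w_2=\tfrac{1}{T}u$. Then $w_1\in H^2$ with $\|w_1\|_{H^2}\le C\|g\|_{L^2}$, while $w_2\in H^3$ with $\|w_2\|_{H^3}\le (C/T)\|u\|_{H^1}\le(C/T)\|g\|_{L^2}$ by bootstrapping elliptic regularity for $-\Delta$ (this is where the $H^1$ regularity of $u$ gets upgraded to an $H^3$ bound on $w_2$). Using \eqref{Xprop1} on $w_1$ and \eqref{Xprop2} on $w_2$, I obtain $\phi_1,\phi_2\in X_h$ with $\|w_1-\phi_1\|_{H^1}\le Ch\|g\|_{L^2}$ and $\|w_2-\phi_2\|_{H^1}\le Ch^2\|w_2\|_{H^3}\le Ch^2/T\,\|g\|_{L^2}$; setting $\phi:=\phi_1+\phi_2=\sum c_i\varphi_i$ and $v:=\sum c_i\psi_{i,T}$ establishes \eqref{hhgsfhdgfhfd}.

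For \eqref{eqci2}, I apply the stability bound \eqref{Controlphi2sum} to get $h^d\sum c_i^2\le C\|\nabla\phi\|_{L^2}^2$, and estimate $\|\nabla\phi\|_{L^2}\le\|\nabla w\|_{L^2}+\|\nabla(w-\phi)\|_{L^2}\le C\|w\|_{H^2}+C(h+h^2/T)\|g\|_{L^2}\le C(1+1/T)\|g\|_{L^2}$, which upon squaring yields the claimed $Ch^{-d}(1+T^{-2})\|g\|_{L^2}^2$. The main obstacle is obtaining the correct $h^2/T$ term: this forces the use of the higher-order approximation property \eqref{Xprop2}, which requires the $H^3$ estimate on $w_2$ and hence the interior/boundary elliptic regularity bootstrap applied to $-\Delta w_2=\tfrac{1}{T}u$ with $u\in H^1_0(\Omega)$.
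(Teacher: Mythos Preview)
Your proof is correct and follows essentially the same route as the paper's: the paper also splits the right-hand side into the $g$-part (handled via \eqref{Xprop1} and $H^2$-regularity of $\Delta^{-1}g$) and the $u/T$-part (handled via \eqref{Xprop2} and $H^3$-regularity of $\Delta^{-1}u$), and likewise derives \eqref{eqci2} from \eqref{Controlphi2sum} by bounding $\|\sum c_i\nabla\varphi_i\|_{L^2}$ in terms of $(1+T^{-1})\|g\|_{L^2}$. The only difference is notational: the paper defines its auxiliary functions $w_1,w_2$ as the residuals $\Delta^{-1}(g-\sum c_{i,1}\Delta\varphi_i)$ and $\Delta^{-1}(u/T-\sum c_{i,2}\Delta\varphi_i)$ rather than first solving the Poisson problems and then subtracting the $X_h$-approximants, but these are the same objects.
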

\begin{proof}
Let $v=\sum_i c_i \psi_{i,T}$.
We have
\begin{equation}\label{jahjhguey3}
\frac{u-v}{T}-\diiv\big(a\nabla (u-v)\big)=g+\frac{u}{T}-\sum_i c_i \Delta \varphi_i.
\end{equation}
Define $a[v]$ to be the energy norm $a[v]:=\int_\Omega (\nabla v)^T a \nabla v$.
Multiplying \eqref{jahjhguey3} by $u-v$ and integrating by parts, we obtain that
\begin{equation}\label{jadsehjhguey3}
\frac{\|u-v\|_{L^2(\Omega)}^2}{T}+a[u-v]=\int_\Omega (u-v)(g+\frac{u}{T}-\sum_i c_i \Delta \varphi_i).
\end{equation}
Write $c_{i}=c_{i,1}+c_{i,2}$ and let $w_1$ and $w_2$ be the solutions of $\Delta w_1=g-\sum_i c_{i,1} \Delta \varphi_i$ and
$\Delta w_2=\frac{u}{T}-\sum_i c_{i,2} \Delta \varphi_i$ with Dirichlet boundary conditions on $\partial \Omega$. Then, we obtain by integration by parts and
the Cauchy-Schwartz inequality that
\begin{equation}\label{jadsehjhguey3uuuu}
\frac{\|u-v\|_{L^2(\Omega)}^2}{T}+a[u-v]\leq \big\|\nabla(u-v)\big\|_{(L^2(\Omega))^d}\big(\|\nabla w_1\|_{(L^2(\Omega))^d}+\|\nabla w_2\|_{(L^2(\Omega))^d}\big).
\end{equation}
Using \eqref{Xprop1}, we can choose $(c_{i,1})$ so that
\begin{equation}\label{jadseshjhguey3}
\|\nabla w_1\|_{(L^2(\Omega))^d}\leq C h \|g\|_{L^2(\Omega)}.
\end{equation}
Using \eqref{Xprop2}, we can choose $(c_{i,2})$ so that
\begin{equation}\label{jadeesehjhguey3}
\|\nabla w_2\|_{(L^2(\Omega))^d}\leq C \frac{h^2}{T} \|u\|_{H^1_0(\Omega)},
\end{equation}
we conclude the proof of the approximation \eqref{hhgsfhdgfhfd} by observing that $\|u\|_{H^1_0(\Omega)} \leq C \|g\|_{L^2(\Omega)}$.
Let us now prove Equation \eqref{eqci2}. First, observe that Equation \eqref{Controlphi2sum} and the triangular inequality imply that
\begin{equation}\label{Controlphi2sumygyg}
\big( \sum_i   (c_i)^2\big)^\frac{1}{2}  \leq   C  h^{-\frac{d}{2}} \Big(\|\sum_{i} c_{i,1} \nabla \varphi_i\|_{L^2(\Omega)}+\|\sum_{i} c_{i,2} \nabla \varphi_i\|_{L^2(\Omega)}\Big).
\end{equation}
Next, we obtain from \eqref{jadseshjhguey3} and Poincar\'{e} inequality and
\begin{equation}\label{eqf1}
\|\sum_{i} c_{i,1} \nabla \varphi_i\|_{L^2(\Omega)} \leq C  \|g\|_{L^2(\Omega)}
\end{equation}
and
\begin{equation}\label{eqf2}
\|\sum_{i} c_{i,2} \nabla \varphi_i\|_{L^2(\Omega)} \leq C \frac{1}{T} \|g\|_{L^2(\Omega)}
\end{equation}
We conclude by combining equations \eqref{eqf1} and \eqref{eqf2} with \eqref{Controlphi2sumygyg}.
\end{proof}

We will now control the error induced by the localization of the elliptic problem \eqref{loclakjlkajlei23TT}. To this end, for each
each basis element $\varphi_i$ of $X_h$ write $S_i$ the intersection of the support of $\varphi_i$ with $\Omega$ and
let $\Omega_i$ be a subset of $\Omega$ containing $S_i$ such that $\dist(S_i,\Omega/\Omega_i) >0$. Let also
$\psi_{i,T,\Omega_i}$ be the solution of
\begin{equation}\label{loclakjlddrfredkajlei23TT}
\begin{cases}
\frac{1}{T}\psi_{i,T,\Omega_i}-\diiv (a \nabla \psi_{i,T,\Omega_i}) = \Delta \varphi_i \quad &\text{in}\quad  \Omega_i\\
\psi_{i,T,\Omega_i}=0 &\text{on} \quad \partial \Omega_i.
\end{cases}
\end{equation}
For $A,B\subset \Omega$, write $d(A,B)$ the Euclidean distance between the sets $A$ and $B$.
\begin{Proposition}\label{jhgsjhgdg6354r5df}
Extending $\psi_{i,T,\Omega_i}$ by $0$ on $\Omega/\Omega_i$ we have
\begin{equation}\label{gjhsfhgsfdhgd}
\big\|\psi_{i,T}-\psi_{i,T,\Omega_i}\big\|_{H^1(\Omega)} \leq \frac{C h^{\frac{d}{2}-1} (T^{-1}+1)}{\big(\dist(S_i,\Omega/\Omega_i)\big)^{d+1}} \exp\Big(-\frac{\dist(S_i,\Omega/\Omega_i)}{C \sqrt{T}}\Big).
\end{equation}
\end{Proposition}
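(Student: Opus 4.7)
The strategy is to reduce the claim to an exponential decay estimate for $\psi_{i,T}$ away from the support of $\nabla\varphi_i \subset S_i$. Setting $\rho := \dist(S_i,\Omega\setminus\Omega_i)$ and writing $w := \psi_{i,T}-\psi_{i,T,\Omega_i}$ on $\Omega_i$, the extension by zero yields the orthogonal-like split
\[
\|\psi_{i,T}-\psi_{i,T,\Omega_i}\|_{H^1(\Omega)}^2 = \|w\|_{H^1(\Omega_i)}^2 + \|\psi_{i,T}\|_{H^1(\Omega\setminus\Omega_i)}^2.
\]
Since $\nabla\varphi_i$ vanishes outside $S_i \subset \Omega_i$, $w$ satisfies the \emph{homogeneous} equation $T^{-1}w-\diiv(a\nabla w)=0$ on $\Omega_i$ with Dirichlet trace $\psi_{i,T}|_{\partial\Omega_i}$. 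Thus both contributions are controlled once $\psi_{i,T}$ is shown to be exponentially small at distance $\geq \rho/2$ from $S_i$.

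The heart of the proof is an Agmon/Caccioppoli-type weighted energy estimate. I would introduce a Lipschitz weight $\eta(x) := \min\bigl(\mu\,d(x,S_i)/\sqrt{T},\,\mu\rho/\sqrt{T}\bigr)$ with $\mu>0$ chosen small enough, depending only on $\lambda_{\min}(a)$ and $\lambda_{\max}(a)$, so that $|\nabla\eta|\leq\mu/\sqrt{T}$ and $\eta\equiv 0$ on $S_i$. Testing the PDE for $\psi_{i,T}$ against $e^{2\eta}\psi_{i,T}$ and integrating by parts yields
\[
\int_\Omega e^{2\eta}\!\Bigl(T^{-1}\psi_{i,T}^2 + (\nabla\psi_{i,T})^T a\nabla\psi_{i,T}\Bigr) + 2\!\int_\Omega e^{2\eta}\psi_{i,T}(\nabla\psi_{i,T})^T a\nabla\eta = \int_\Omega e^{2\eta}\Delta\varphi_i\,\psi_{i,T}.
\]
The cross term is absorbed into the elliptic form via Cauchy--Schwarz and the smallness of $\mu$. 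For the source, integrate by parts once more, use that $\nabla\varphi_i$ is supported on $S_i$ where $e^{2\eta}=1$, and absorb the resulting terms. Combining with \eqref{Controlphi}, this produces the master inequality
\[
\int_\Omega e^{2\eta}\Bigl(T^{-1}\psi_{i,T}^2 + |\nabla\psi_{i,T}|^2\Bigr) \leq C\|\nabla\varphi_i\|_{L^2}^2 \leq C h^{d-2}.
\]

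On $\Omega\setminus\Omega_i\subset\{d(\cdot,S_i)\geq\rho\}$ the weight satisfies $e^{2\eta}\geq e^{2\mu\rho/\sqrt{T}}$, which immediately delivers the desired smallness of $\|\psi_{i,T}\|_{H^1(\Omega\setminus\Omega_i)}$. For the $w$ contribution, pick a cutoff $\chi$ with $\chi=1$ in a neighborhood of $\partial\Omega_i$, $\chi=0$ on $\{d(\cdot,\partial\Omega_i)\geq\rho/2\}\cap\Omega_i$, and $|\nabla\chi|\leq C/\rho$, so that $\chi\psi_{i,T}$ has the same boundary trace as $w$. Testing the homogeneous equation for $w$ against $w-\chi\psi_{i,T}\in H^1_0(\Omega_i)$ and applying Cauchy--Schwarz yields
\[
\|w\|_{H^1(\Omega_i)}^2 + T^{-1}\|w\|_{L^2(\Omega_i)}^2 \leq C\bigl(\|\nabla(\chi\psi_{i,T})\|_{L^2}^2 + T^{-1}\|\chi\psi_{i,T}\|_{L^2}^2\bigr).
\]
Expanding $\nabla(\chi\psi_{i,T})=\chi\nabla\psi_{i,T}+\psi_{i,T}\nabla\chi$ and invoking the weighted estimate on $\operatorname{supp}\chi\subset\{d(\cdot,S_i)\geq\rho/2\}$ (where $e^{-2\eta}\leq e^{-\mu\rho/\sqrt{T}}$) together with $|\nabla\chi|\leq C/\rho$ produces a bound of the form $C(1+T/\rho^2)\,h^{d-2}\,e^{-\mu\rho/\sqrt{T}}$; the polynomial factor $\rho^{-(d+1)}$ and the prefactor $(T^{-1}+1)$ in the statement accommodate these $\nabla\chi$ losses after consolidating the $L^2$ and $H^1$ contributions.

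The main technical obstacle is the \emph{calibration of the exponential weight}: the $1/\sqrt{T}$ decay rate is forced by the requirement that $\lambda_{\max}(a)^2|\nabla\eta|^2 \lesssim 1/T$ (so that the drift term can be absorbed into the elliptic form), and the constant $\mu$ in the exponent depends on the contrast $\lambda_{\max}(a)/\lambda_{\min}(a)$. All polynomial $\rho$-losses are benign in the regime $\rho\gtrsim\sqrt{T}\ln(1/h)$ dictated by Theorem~\ref{thnloc}, so once $C_1$ is chosen large enough (relative to $\mu^{-1}$) the exponential swamps the polynomial prefactor and the localization error is negligible compared to the transfer-property error of Proposition~\ref{jhjhejheg}.
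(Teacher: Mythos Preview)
Your argument is correct and takes a genuinely different route from the paper's. The paper proceeds via pointwise Green's function decay (Lemma~\ref{controlG}, imported from \cite{Gloria10}) combined with iterated Caccioppoli inequalities: it introduces an intermediate domain $\Omega_i'$ with $S_i\subset\Omega_i'\subset\Omega_i$, bounds $\|\psi_{i,T}\|_{H^1(\Omega\setminus\Omega_i')}$ and $\|\psi_{i,T,\Omega_i}\|_{H^1(\Omega_i\setminus\Omega_i')}$ separately by representing each function through the Green's kernel and applying Caccioppoli twice (Lemma~\ref{sumLemma}), and finally controls $\|\psi_{i,T}-\psi_{i,T,\Omega_i}\|_{H^1(\Omega_i')}$ by a boundary-data lemma (Lemma~\ref{diffLem}) that again reduces to the Green's function bound. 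Your Agmon weighted-energy estimate bypasses the Green's function entirely and gives the exponential decay of $\psi_{i,T}$ in one stroke; the two-piece split (interior $w$ plus exterior $\psi_{i,T}$) is also cleaner than the paper's three-piece triangle inequality, and your polynomial prefactors are in fact sharper than the stated $\rho^{-(d+1)}$. The trade-off is that the paper's Green's-function viewpoint is what drives the high-contrast extension (Theorem~\ref{thnsslochigh}): there the key observation is that the maximum principle makes the decay of $G_{T,D}$ across the bounded-contrast buffer independent of $\gamma$, a statement that is transparent for the pointwise kernel but would require re-deriving your weighted estimate with a weight supported in the buffer annulus.
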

We refer to Section \ref{jhgsjhgd33ge3e} of the Appendix for the proof of Proposition \ref{jhgsjhgdg6354r5df}.

Taking $\Omega_i:=B(x_i,C_1 h^\alpha \ln \frac{1}{h})\cap \Omega$ (we use the particular notation $C_1$ because our proof of accuracy requires that specific constant to be large enough, i.e., larger than a constant depending on the parameter $C$ appearing in the right hand side of \eqref{gjhsfhgsfdhgd} and the parameter $C$ describing the balls $B(x_i,C \,h)$ containing the support of the
 basis functions $(\varphi_i)_{1\leq i \leq N}$ introduced in Subsection \ref{jhsgdjhdghgdhed}) and $T=h^{2\alpha}$ in equation \eqref{gjhsfhgsfdhgd} of Proposition \ref{jhgsjhgdg6354r5df}, we obtain for $C_1$ large enough (but independent from $h$) that
\begin{equation}\label{gjhwedwdehgd}
\big\|\psi_{i,T}-\psi_{i,T,\Omega_i}\big\|_{H^1(\Omega)} \leq C h^{d+1+2\alpha}.
\end{equation}
Let $u$ be the solution of \eqref{scalarproblem0} in $H^1_0(\Omega)$. Using Proposition \ref{jhjhejheg}, we obtain that there exist coefficients $c_i$ such that
\begin{equation}
\big\|u-\sum_{i} c_i \psi_{i,T}\big\|_{H^1_0(\Omega)}\leq C\big(h+h^{2-2\alpha}\big) \|g\|_{L^2(\Omega)}.
\end{equation}
and
\begin{equation}\label{eqci2deduc}
\sum_i c_i^2 \leq  C h^{-d-4 \alpha} \|g\|_{L^2(\Omega)}^2
\end{equation}
Using the triangle inequality, it follows that
\begin{equation}
\big\|u-\sum_{i} c_i \psi_{i,T,\Omega_i}\big\|_{H^1_0(\Omega)} \leq C\big(h+h^{2-2\alpha}\big) \|g\|_{L^2(\Omega)}+\sum_{i} |c_i| \big\|\psi_{i,T}-\psi_{i,T,\Omega_i}\big\|_{H^1(\Omega)},
\end{equation}
whence, from Cauchy-Schwartz inequality,
\begin{equation}\label{jshgdjhgdh}
\begin{split}
\big\|u-\sum_{i} c_i \psi_{i,T,\Omega_i}\big\|_{H^1_0(\Omega)} \leq & C\big(h+h^{2-2\alpha}\big) \|g\|_{L^2(\Omega)}\\&+\big(\sum_{i} |c_i|^2 \big)^\frac{1}{2}\Big(\sum_{i}  \big\|\psi_{i,T}-\psi_{i,T,\Omega_i}\big\|_{H^1(\Omega)}^2 \Big)^\frac{1}{2}.
\end{split}
\end{equation}
Combining \eqref{jshgdjhgdh} with  \eqref{eqci2deduc}, we obtain that
\begin{equation}\label{jsgdjhddgj}
\begin{split}
\big\|u-\sum_{i} c_i \psi_{i,T,\Omega_i}\big\|_{H^1_0(\Omega)} \leq & C\big(h+h^{2-2\alpha}\big) \|g\|_{L^2(\Omega)}\\& +C h^{-\frac{d}{2}-2\alpha}
\|g\|_{L^2(\Omega)} \Big(\sum_{i}  \big\|\psi_{i,T}-\psi_{i,T,\Omega_i}\big\|_{H^1(\Omega)}^2 \Big)^\frac{1}{2}.
\end{split}
\end{equation}
Using \eqref{gjhwedwdehgd} in \eqref{jsgdjhddgj}, we obtain that
\begin{equation}\label{jsisisusgdjhddgj}
\begin{split}
\big\|u-\sum_{i} c_i \psi_{i,T,\Omega_i}\big\|_{H^1_0(\Omega)} \leq C\big(h+h^{2-2\alpha}\big) \|g\|_{L^2(\Omega)}.
\end{split}
\end{equation}
Observe that it is the exponential decay in \eqref{gjhsfhgsfdhgd} that allows us to compensate for the large term on the right hand side of \eqref{jsgdjhddgj} via \eqref{gjhwedwdehgd}.
This concludes the proof of Theorem \ref{thnloc}.

\begin{figure}[httb]
  \begin{center}
    \subfigure
    {\includegraphics[width=0.35\textwidth,height= 0.3\textwidth]{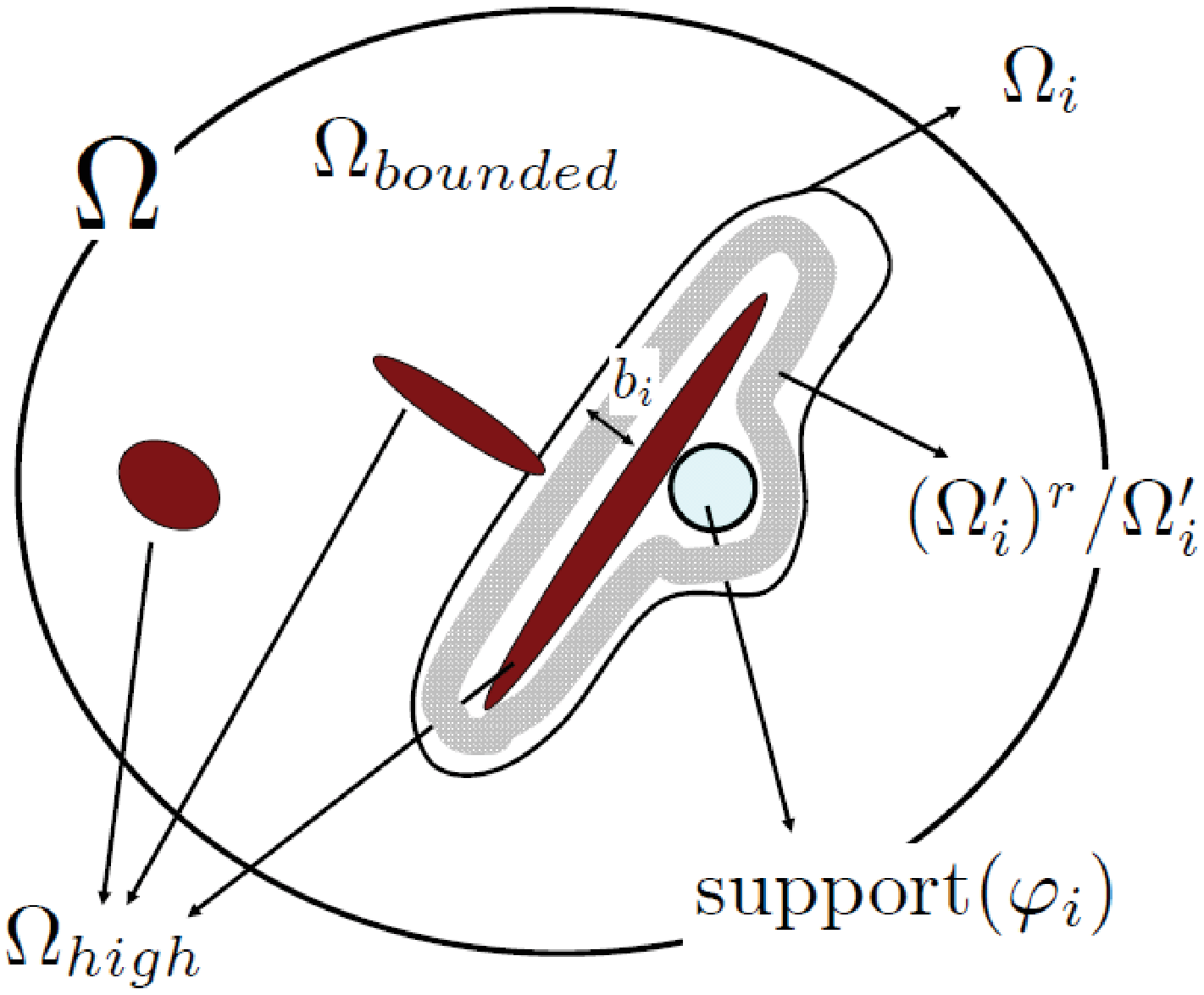}}
    \goodgap
    \subfigure
    {\includegraphics[width=0.35\textwidth,height= 0.3\textwidth]{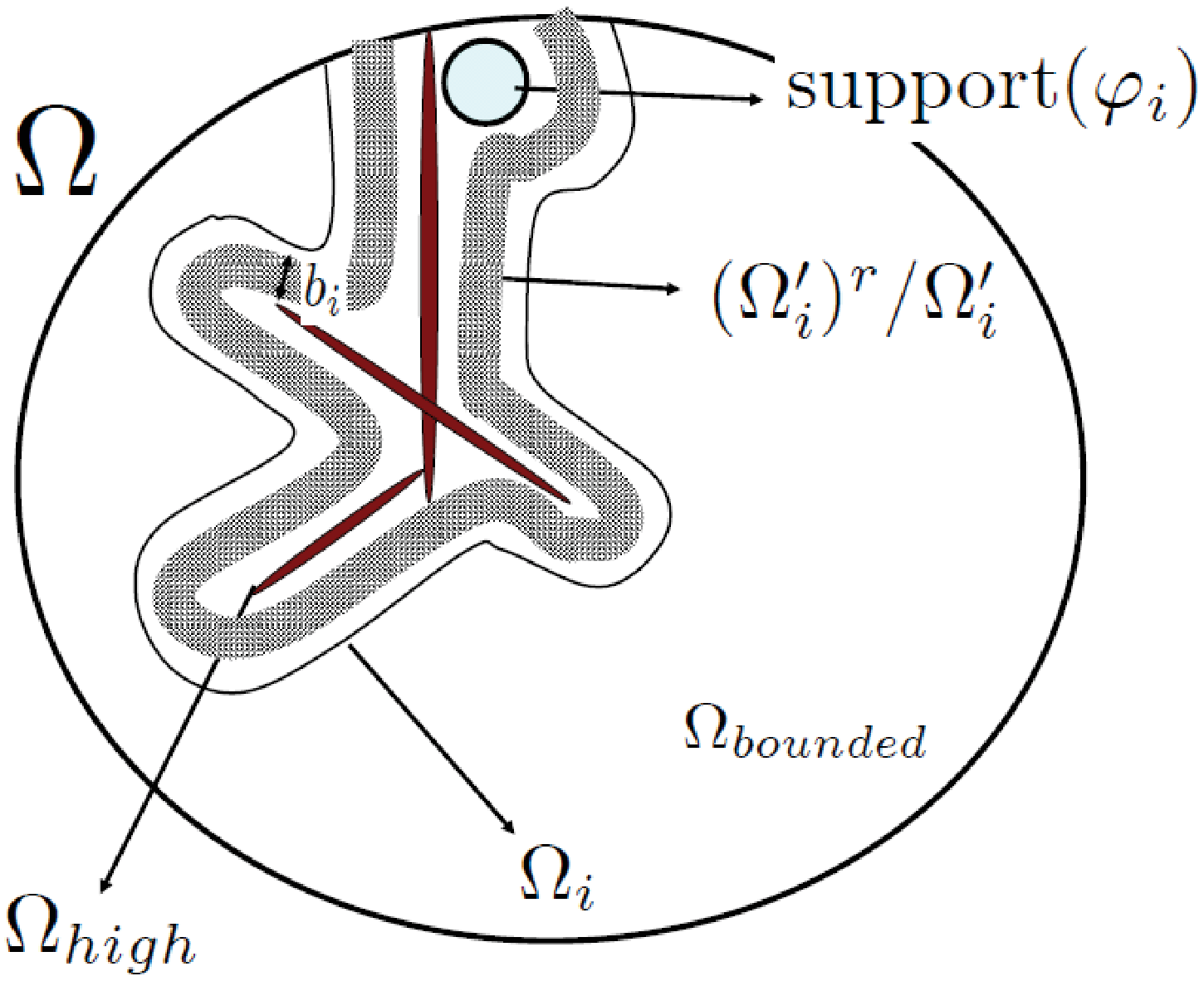}}
    \caption{Illustrations of the buffer distance.}
    \label{fig:bufferillustrate}
  \end{center}
\end{figure}

\subsection{On localization with high-contrast.}\label{highcontrast}
The constant $C$ in the approximation error  estimate \eqref{ksahskjdhkhwsj} depends, a priori, on the contrast of $a$. Is it possible to localize the computation of bases for $V_h$ when the contrast of $a$ is high? The purpose of this subsection is to show that the answer
is yes provided that there is a buffer zone between the boundaries of localization sub-domains and the supports of the elements $\varphi_i$ where the contrast of $a$ remains bounded. More precisely, assume that $\Omega$ is the disjoint union of $\Omega_{bounded}$ and $\Omega_{high}$. Assume that \eqref{skjdhskhdkh3e} holds only on $\Omega_{bounded}$, and that on $\Omega_{high}$ we have
 \begin{equation}\label{skjdsddefhskhdkh3e}
\lambda_{\min}(a)|\xi|^2 \leq \xi^T a(x) \xi \leq \gamma
|\xi|^2.
\end{equation}
where $\gamma$ can be arbitrarily large. Practical examples include media characterized by a bounded contrast background with high conductivity inclusions or channels.  Let $\psi_i^{high}$ be the solution of
\begin{equation}\label{loclakjededwslkajlei23}
\begin{cases}
h^{-2\alpha}\psi_i^{high}-\diiv (a \nabla \psi_i^{high}) = \Delta \varphi_i \quad &\text{in}\quad \Omega_i \\
\psi_i=0 &\text{on} \quad \partial \Omega_i.
\end{cases}
\end{equation}
Let
\begin{equation}\label{Vhprime}
V_h^{high}:=\operatorname{span}(\psi_i^{high})
\end{equation}
be the linear space spanned by the elements $\psi_i^{high}$. For each $i$, define $b_i$ to be the largest number $r$ such that there exists a subset $\Omega_i'$ such that: the closure of $\Omega_i'$ contains the support of $\varphi_i$, $(\Omega_i')^r$ is a subset of $\Omega_i$ (where $A^r$ are the set of points of $\Omega$ that are at distance at most $r$ for $A$), and  $(\Omega_i')^r/\Omega_i'$ is a subset of $\Omega_{bounded}$. If no such subset exists we set $b_i:=0$. $b_i$ can be interpreted as the non-high-contrast buffer distance between the support of $\varphi_i$ and the boundary of $\Omega_i$. We refer to Figure \ref{fig:bufferillustrate} for illustrations of the buffer distance.

\begin{Theorem}\label{thnsslochigh}
For $g\in L^2(\Omega)$, let $u$ be the solution of \eqref{scalarproblem0} in $H^1_0(\Omega)$ and $u_h$ the solution of \eqref{scalarproblem0} in $V_h^{high}$.
There exists $C_0>0$ such that if for all $i$, $b_i\geq C_0 h^\alpha \ln \frac{1}{h}$ then
\begin{equation}\label{ksahskjdhckskjdkjhwsj}
\frac{\|u-u_h\|_{H^1_0(\Omega)}}{\|g\|_{L^2(\Omega)}}\leq \begin{cases} C h \quad &\text{if}\quad \alpha \in (0,\frac{1}{2}]\\
 C h^{2-2\alpha} \quad &\text{if}\quad \alpha \in [\frac{1}{2},1), \end{cases}
\end{equation}
where the constants $C$ and $C_0$ depend on $\lambda_{\min}(a)$, $\lambda_{\max}(a)$ (the bounds on $a$ in $\Omega_{bounded}$), $d$, $\Omega$ but not on $h$ and $\gamma$ (The upper bound on $a$ on $\Omega_{high}$).
\end{Theorem}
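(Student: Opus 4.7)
The plan is to mirror the architecture of the proof of Theorem \ref{thnloc}, keeping its two-step structure (approximation followed by localization) intact, but replacing the decay estimate of Proposition \ref{jhgsjhgdg6354r5df} by a high-contrast analogue whose exponential rate depends only on $\lambda_{\min}(a)$ and $\lambda_{\max}(a)$ restricted to $\Omega_{bounded}$, not on $\gamma$. The buffer hypothesis $b_i \ge C_0 h^\alpha \ln \frac{1}{h}$ is tailored precisely so that this replacement is possible.

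First I would verify that Proposition \ref{jhjhejheg} transfers verbatim to the high-contrast setting. An audit of its proof shows that the ellipticity of $a$ is used only through the coercivity $a[u-v] \ge \lambda_{\min}(a)\|\nabla(u-v)\|_{L^2}^2$ and the a priori bound $\|u\|_{H^1_0(\Omega)} \le C \|g\|_{L^2(\Omega)}$, both of which depend only on $\lambda_{\min}(a)$ (uniformly bounded below on all of $\Omega$ by hypothesis). Consequently, the same coefficients $c_i$ with $\sum_i c_i^2 \le C h^{-d-4\alpha}\|g\|_{L^2(\Omega)}^2$ and the same intermediate approximation estimate \eqref{hhgsfhdgfhfd} remain available, with constants independent of $\gamma$.

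The central new work is a $\gamma$-free localization decay: with $T = h^{2\alpha}$, I would prove
\[
\|\psi_{i,T} - \psi_i^{high}\|_{H^1(\Omega)} \le C h^{p}\exp\!\Big(-\tfrac{b_i}{C\sqrt{T}}\Big)
\]
for a fixed exponent $p$ and a constant $C$ depending only on the ellipticity bounds of $a$ on $\Omega_{bounded}$. The approach is an Agmon-type weighted energy estimate using the Lipschitz weight
\[
\rho(x) := \min\!\big(b_i,\, \dist(x, \Omega_i')\big),
\]
which vanishes on $\Omega_i'$, grows at unit speed across the buffer to value $b_i$, and then saturates. The crucial property is that $\nabla\rho$ is supported in $(\Omega_i')^{b_i}\setminus \Omega_i'$, which by hypothesis is contained in $\Omega_{bounded}$. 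Testing the global equation \eqref{loclakjlkajlei23TT} against $e^{2\beta\rho/\sqrt T}\psi_{i,T}$ and integrating by parts produces a single cross term
\[
\frac{2\beta}{\sqrt T}\int e^{2\beta\rho/\sqrt T}\,\psi_{i,T}\,(a\nabla\psi_{i,T})\cdot\nabla \rho,
\]
in which $a$ is seen only where $\nabla\rho \ne 0$, hence only on $\Omega_{bounded}$. A weighted Cauchy--Schwarz absorbs this into the principal energy terms provided $\beta \le c/\sqrt{\lambda_{\max}(a)}$ with $\lambda_{\max}$ on $\Omega_{bounded}$. Since $\varphi_i$ (and therefore $\Delta\varphi_i$) is supported in $\Omega_i'$ where $\rho\equiv 0$, the source side is controlled by $\|\nabla\varphi_i\|_{L^2}^2 \le Ch^{d-2}$ using \eqref{Controlphi}. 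This yields a weighted estimate that immediately implies $\|\psi_{i,T}\|_{H^1(\Omega\setminus (\Omega_i')^{b_i})} \le Ch^{(d-2)/2}e^{-\beta b_i/\sqrt T}$. The difference $w = \psi_{i,T}-\psi_i^{high}$ then satisfies the homogeneous equation $T^{-1}w - \diiv(a\nabla w) = 0$ in $\Omega_i$ with Dirichlet data $\psi_{i,T}|_{\partial\Omega_i}$ (controlled by the same weighted bound, since $\partial\Omega_i$ lies beyond the buffer), and a standard energy estimate for $w$ in $\Omega_i$ combined with the bound on $\psi_{i,T}$ outside $\Omega_i$ closes out the decay claim.

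With this $\gamma$-free decay in hand, one chooses $C_0$ large enough that $\exp(-\beta C_0 h^\alpha \ln h^{-1}/h^\alpha) = h^{\beta C_0}$ dominates the polynomial loss $h^{-d/2-2\alpha}$ coming from $(\sum c_i^2)^{1/2}$, and then combines the approximation and localization estimates via the same Cauchy--Schwarz chain as in \eqref{jshgdjhgdh}--\eqref{jsisisusgdjhddgj}. The hard part will be the weighted energy step: pinning down that $\beta$ really does not see the high-contrast region, and tracking that the source and trace/energy reductions for $w$ introduce only polynomial-in-$h$ prefactors, so that the exponential gain across the buffer is not cancelled. Everything else is a faithful re-run of the Theorem \ref{thnloc} argument.
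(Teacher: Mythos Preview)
Your proposal is correct and follows the same two-step skeleton as the paper (Proposition \ref{jhjhejheg} verbatim, then a $\gamma$-free analogue of Proposition \ref{jhgsjhgdg6354r5df}, then the Cauchy--Schwarz chain), but the mechanism you use for the localization decay is genuinely different. The paper adapts Lemmas \ref{controlG}--\ref{hsgsjhgsjg3e3ee} by exploiting the maximum principle: the pointwise decay of the Green's function of $\frac{1}{T}-\diiv(a\nabla)$ across the buffer annulus $(\Omega_i')^r\setminus\Omega_i'$ can be dominated by a comparison Green's function whose constants see only $\lambda_{\min}(a),\lambda_{\max}(a)$ on $\Omega_{bounded}$. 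You instead run an Agmon weighted-energy argument, choosing the weight so that $\nabla\rho$ is supported in the buffer; the single cross term then involves $a$ only on $\Omega_{bounded}$, and the exponential rate $\beta$ is fixed by $\lambda_{\max}(a)|_{\Omega_{bounded}}$ alone. Your route is purely variational and therefore transfers directly to the vectorial setting advertised in the abstract (elasto-dynamics), where the maximum principle is unavailable; the paper's route is closer to its existing machinery but is tied to scalar equations. One point to watch in your write-up: the ``standard energy estimate'' for $w=\psi_{i,T}-\psi_i^{high}$ on $\Omega_i$ must use a lift of the boundary data whose gradient is localized to the buffer (e.g.\ $(1-\eta)\psi_{i,T}$ with $\nabla\eta$ supported in $(\Omega_i')^{2b_i/3}\setminus(\Omega_i')^{b_i/3}$), and you should note that the Agmon bound already controls $\int \chi^2\,\nabla\psi_{i,T}\cdot a\nabla\psi_{i,T}$ with the $a$-weight included, so the energy of this lift is automatically $\gamma$-free.
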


\begin{Remark}
Recall that the global basis computed in \eqref{lakjlkajlei23} remains accurate if the medium is both of high contrast ($\lambda_{\max}(a)>>1$) and degenerate ($\lambda_{\min}(a)<<1$). The basis computed in \eqref{loclakjededwslkajlei23} preserves the former property (of accuracy for high contrast media) but loses the latter (property of accuracy in the degenerate case) since the constant $C$ in \eqref{ksahskjdhckskjdkjhwsj} depends on $\lambda_{\min}(a)$.
\end{Remark}

\begin{Remark}
Observe that local solves have to resolve the connected components of high contrast structures.
This is the price to pay for localization with high contrast in the most general case. Recall that in classical homogenization with high contrast the limit of the homogenized operator may be a non-local operator (we refer for instance to \cite{MR2217509}). A similar phenomenon is observed here (distant points connected by high conductivity channels are associated with a low resistance metric and a large coupling coefficient in the numerically homogenized stiffness matrix).
\end{Remark}

The proof of Theorem \ref{thnsslochigh} is similar to that of Theorem \ref{thnloc}, but it requires a precise tracking of the constants involved.
 Because of the close similarity we will not include the proof in this paper but only give its main lines. First, the proof of Proposition \ref{jhjhejheg} remains unchanged as the constants $C$ in \eqref{hhgsfhdgfhfd} and \eqref{eqci2} do not depend on the maximum eigenvalue of the conductivity $a$. Only the proof of Proposition \ref{jhgsjhgdg6354r5df} has to be adapted and the part of the proof below Proposition \ref{jhgsjhgdg6354r5df} remains unchanged. This requires an application of the elements of lemmas \ref{controlG}, \ref{sumLemma}, \ref{diffLem} and \ref{hsgsjhgsjg3e3ee} to buffer sub-domains $(\Omega_i')^r/\Omega_i'$.
 The main point is to observe that the decay of the Green's function in $(\Omega_i')^r/\Omega_i'$ can be bounded independently from $\gamma$ (due to the maximum principle).

 Observe that the choice of the sub-domain $\Omega_i$ in \eqref{loclakjededwslkajlei23} can be chosen to be the same as in \eqref{loclakjlddrfredkajlei23TT} if its intersection with high contrast inclusions is void (i.e., if the maximum eigenvalue of $a$ over $\Omega_i$ remains bounded independently from $\gamma$); otherwise the choice of $\Omega_i$ in \eqref{loclakjededwslkajlei23} has to be enlarged (when compared to that associated with \eqref{loclakjlddrfredkajlei23TT}) to contain the high-contrast inclusion (plus its buffer).

\section{The basis remains accurate for parabolic PDEs.}\label{kjsdkjhdkjshkd}

The computational gain of the method proposed in this paper is particularly significant for time-dependent problems. One such problem is the parabolic equation associated with the operator $-\diiv(a\nabla)$. More precisely, consider the time-dependent partial differential equation

\begin{equation}\label{parabolicscalarproblem0}
\begin{cases}
   \partial_t u(x,t) -\diiv \Big(a(x)  \nabla u(x,t)\Big)=g(x,t) \quad  (x,t) \in \Omega_T;\, g \in L^2(\Omega_T), \\
    u=0 \quad \text{on}\quad \partial \Omega_T,
    \end{cases}
\end{equation}
where $a$ and $\Omega$ satisfy the same assumptions as those associated with PDE \eqref{scalarproblem0}, $\Omega_T:=\Omega \times [0,T]$ for some $T>0$ and $\partial \Omega_T:=( \partial \Omega \times [0,T]) \cup (\Omega \times \{t=0\})$.

Let $V_h$ be the finite-dimensional approximation space defined in \eqref{Vh}. Let $u_h$ be the finite element solution of \eqref{parabolicscalarproblem0}, i.e., $u_h$ can be decomposed as
\begin{equation}
u_h(x,t)=\sum_i c_i(t) \psi_i(x),
\end{equation}
and solves for all $j$
\begin{equation}\label{jhgjhgjghggg}
(\psi_j, \partial_t u_h)_{L^2(\Omega)}=-a[\psi_j,u_h]+(\psi_j, g)_{L^2(\Omega)},
\end{equation}
with $a[v,w]:=\int_\Omega (\nabla v)^T a \nabla w$. Write
\begin{equation}
\|v\|_{L^2(0,T,H^1_0(\Omega))}^2:=\int_0^T \int_{\Omega} |\nabla v|^2(x,t)\,dx\,dt.
\end{equation}

\begin{Theorem}\label{kshjhgshdhg}
We have
\begin{equation}
 \big\|(u-u_h)(.,T)\big\|_{L^2(\Omega)}+\|u-u_h\|_{L^2(0,T,H^1_0(\Omega))}\leq C \|g\|_{L^2(\Omega_T)} (h+h^{2-2\alpha}).
\end{equation}
\end{Theorem}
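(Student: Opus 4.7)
The plan is to carry out the classical Ritz-projection-based Galerkin error analysis for parabolic problems, treating Theorem~\ref{thnloc} as a black-box elliptic approximation estimate. Introduce the Ritz projection $R_h\colon H^1_0(\Omega)\to V_h$ defined by $a[R_h w,v]=a[w,v]$ for every $v\in V_h$, where $a[v,w]:=\int_\Omega(\nabla v)^T a\nabla w$; when $w\in H^1_0(\Omega)$ solves $-\diiv(a\nabla w)=f$ with $f\in L^2(\Omega)$, the function $R_h w$ is precisely the $V_h$-Galerkin solution analysed in Theorem~\ref{thnloc}, so
$$\|w-R_h w\|_{H^1_0(\Omega)}\leq C(h+h^{2-2\alpha})\|f\|_{L^2(\Omega)}.$$
Decompose the error as $u-u_h=\rho+\theta$ with $\rho(\cdot,t):=u-R_h u$ and $\theta(\cdot,t):=R_h u-u_h\in V_h$, both vanishing at $t=0$.

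For the elliptic part $\rho$, I first derive the parabolic a priori bound $\|\partial_t u\|_{L^2(\Omega_T)}^2+a[u(\cdot,T)]\leq \|g\|_{L^2(\Omega_T)}^2$ by testing \eqref{parabolicscalarproblem0} with $\partial_t u$. Since $u(\cdot,t)$ satisfies $-\diiv(a\nabla u)=g-\partial_t u\in L^2(\Omega)$ at each time $t$, the Ritz estimate applied pointwise in $t$, combined with this a priori bound, yields
$$\|\rho\|_{L^2(0,T;H^1_0(\Omega))}\leq C(h+h^{2-2\alpha})\|g\|_{L^2(\Omega_T)}.$$

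For the dynamic part $\theta$, subtracting the weak formulations for $u$ and $u_h$ and invoking the Ritz orthogonality $a[\rho,v_h]=0$ for $v_h\in V_h$ gives
$$(\partial_t\theta,v_h)+a[\theta,v_h]=-(\partial_t\rho,v_h),\qquad v_h\in V_h.$$
The main obstacle is the appearance of $\partial_t\rho$ on the right-hand side: a naive Ritz-type bound on $\partial_t u$ would require $\partial_t u$ itself to solve an elliptic problem with an $L^2$ source (equivalently, control on $\partial_t^2 u$ or on $\partial_t g$), a regularity that is not afforded by the bare hypothesis $g\in L^2(\Omega_T)$.

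The standard workaround is to integrate the $\theta$-equation in time. Setting $E(t):=\int_0^t\theta(s)\,ds\in V_h$ and using $\rho(0)=\theta(0)=0$ gives
$$(\theta(t),v_h)+a[E(t),v_h]=-(\rho(t),v_h),\qquad v_h\in V_h,$$
in which the problematic $\partial_t\rho$ has been replaced by $\rho$ itself. Testing with $v_h=\theta(t)=\partial_t E(t)$, exploiting the identity $a[E,\partial_t E]=\tfrac12\tfrac{d}{dt}a[E]$, and integrating from $0$ to $T$ yields
$$\|\theta\|_{L^2(\Omega_T)}^2+\tfrac12\,a[E(T)]\leq\|\rho\|_{L^2(\Omega_T)}\|\theta\|_{L^2(\Omega_T)},$$
hence $\|\theta\|_{L^2(\Omega_T)}\leq\|\rho\|_{L^2(\Omega_T)}$ and $a[E(T)]\leq 2\|\rho\|_{L^2(\Omega_T)}^2$. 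Testing the same integrated equation with $v_h=E(t)$ and invoking Poincaré, Young, and Gronwall produces $\|E\|_{L^\infty(0,T;L^2)}+\|E\|_{L^2(0,T;H^1_0)}\leq C\|\rho\|_{L^2(\Omega_T)}$. Combining these $E$-estimates with the identity $\theta=\partial_t E$ and a final energy argument (testing the original $\theta$-equation with $\theta$ and integrating $\int_0^T(\partial_t\rho,\theta)\,dt$ by parts as $(\rho(T),\theta(T))-\int_0^T(\rho,\partial_t\theta)\,dt$, then absorbing the endpoint term and handling the remaining integral via Cauchy--Schwarz against the already controlled quantities) delivers the complementary $\|\theta(\cdot,T)\|_{L^2(\Omega)}$ and $\|\theta\|_{L^2(0,T;H^1_0(\Omega))}$ bounds, again in terms of $\|\rho\|_{L^2(0,T;H^1_0(\Omega))}$. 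Triangle inequality together with the $\rho$-bound then delivers the stated estimate.
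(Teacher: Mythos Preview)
Your Ritz-projection decomposition and the time-integrated $E$-equation are reasonable first moves, but the argument breaks down at the ``final energy argument''. After integrating $\int_0^T(\partial_t\rho,\theta)\,dt$ by parts you are left with $-(\rho(T),\theta(T))+\int_0^T(\rho,\partial_t\theta)\,dt$. Neither piece is under control with the tools you have assembled: the quantity $\|\rho(T)\|_{L^2(\Omega)}$ is not available at the rate $h+h^{2-2\alpha}$ because under $g\in L^2(\Omega_T)$ one only knows $u\in L^2(0,T;D(\mathcal L))$, not $u(T)\in D(\mathcal L)$, so the elliptic estimate of Theorem~\ref{thnloc} cannot be invoked at the endpoint; and $\|\partial_t\theta\|_{L^2(\Omega_T)}$ is not among your ``already controlled quantities'' (controlling it would require $\partial_t(R_hu)=R_h(\partial_t u)$, hence $\partial_t u\in L^2(0,T;H^1_0)$, which fails for merely $L^2$ data). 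If you rewrite $\partial_t\theta=\partial_t(u-u_h)-\partial_t\rho$ to exploit the stable bound $\|\partial_t(u-u_h)\|_{L^2(\Omega_T)}\leq C\|g\|_{L^2(\Omega_T)}$, the surviving term is $\int_0^T(\rho,\partial_t(u-u_h))\,dt\leq \|\rho\|_{L^2(\Omega_T)}\,C\|g\|_{L^2(\Omega_T)}$, and with only Poincar\'e on $\rho$ this yields $\|\theta\|_{L^2(0,T;H^1_0)}^2\leq C(h+h^{2-2\alpha})\|g\|^2$, i.e.\ the suboptimal rate $(h+h^{2-2\alpha})^{1/2}$.

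The missing ingredient is an Aubin--Nitsche duality step giving $\|u-R_hu\|_{L^2(\Omega_T)}\leq C(h+h^{2-2\alpha})^2\|g\|_{L^2(\Omega_T)}$. The paper's proof uses precisely this: it works directly with the identity
\[
\tfrac12\|(u-u_h)(\cdot,T)\|_{L^2}^2+\mathcal A_T[u-u_h]
=\big(u-v,\partial_t(u-u_h)\big)_{L^2(\Omega_T)}+\mathcal A_T[u-v,u-u_h],
\]
chooses $v=R_hu$, bounds the first right-hand term by $\|u-R_hu\|_{L^2(\Omega_T)}\|\partial_t(u-u_h)\|_{L^2(\Omega_T)}$, and invokes the duality estimate to recover the full rate. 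No time-integrated $E$-equation and no manipulation of $\partial_t\rho$ are needed. Your approach can be repaired along the same lines, but as written it omits exactly the step that makes the rate come out right.
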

\begin{proof}
The proof is a generalization of the proof found in \cite{MR2377253} (in which approximation spaces are constructed via harmonic coordinates).
Let $\mathcal{A}_T$ be the bilinear form on $L^2(0,T,H^1_0(\Omega))$ defined by
\begin{equation}\label{jhgjahgsjdg}
\mathcal{A}_T[w_1,w_2]:=\int_0^T a[w_1,w_2]\,dt.
\end{equation}
Observe that for all $v \in L^2(0,T,V_h)$,
\begin{equation}\label{hjhgwhgwguygeye}
 \big(v, \partial_t (u-u_h)\big)_{L^2(\Omega_T)}+\A_T[v,u-u_h]=0.
\end{equation}
Writing $\A_T[v]:=\A_T[v,v]$, we deduce that for $v \in L^2(0,T,V_h)$,
\begin{equation}\label{hjhgwhgwguygsdeeye}
\begin{split}
\frac{1}{2}&\big\|(u-u_h)(.,T)\big\|_{L^2(\Omega)}^2+\A_T[u-u_h]=\\& \big(u-v, \partial_t (u-u_h)\big)_{L^2(\Omega_T)}+\A_T[u-v,u-u_h].
\end{split}
\end{equation}
Using $\partial_t u_h$ in \eqref{jhgjhgjghggg} and integrating, we obtain that
\begin{equation}\label{jhgkjhkjhjhgjghggg}
\|\partial_t u_h\|_{L^2(\Omega_T)}^2+\frac{1}{2}a\big[u_h(.,T),u_h(.,T)\big]=\big(\partial_t u_h, g\big)_{L^2(\Omega_T)}.
\end{equation}
Using Minkowski's inequality, we deduce that
\begin{equation}\label{jhgkjsgsugyuyghkjhjhgjghggg}
\|\partial_t u_h\|_{L^2(\Omega_T)}^2+a\big[u_h(.,T),u_h(.,T)\big]\leq C \|g\|_{L^2(\Omega_T)}^2.
\end{equation}
Similarly,
\begin{equation}\label{jhgkjshgsugyuyghkjhjhggjghggg}
\|\partial_t u\|_{L^2(\Omega_T)}^2+a\big[u(.,T),u(.,T)\big]\leq C \|g\|_{L^2(\Omega_T)}^2.
\end{equation}
Using Cauchy-Schwartz and Minkowski inequalities in \eqref{hjhgwhgwguygsdeeye}, we obtain that
\begin{equation}\label{hjhgwhgwguygyuygytsdeeye}
\begin{split}
\big\|(u-u_h)(.,T)\big\|_{L^2(\Omega)}^2+\A_T[u-u_h]\leq  C \|u-v\|_{L^2(\Omega_T)} \|g\|_{L^2(\Omega_T)} +C \A_T[u-v].
\end{split}
\end{equation}
Take $v=\mathcal{R}_h u$ to be the projection of $u$ onto $L^2(0,T,V_h)$ with respect to the bilinear form $\A_T$.  Observing that
$-\diiv(a\nabla u)=g-\partial_t u$ with $(g-\partial_t u) \in L^2(\Omega_T)$, we obtain from Theorem \ref{thnloc} that
\begin{equation}\label{jhsgjdhgsdh3}
\big(\A_T[u-\mathcal{R}_h u]\big)^\frac{1}{2}\leq C \|g\|_{L^2(\Omega_T)} (h+h^{2-2\alpha}).
\end{equation}
Let us now show (using a standard duality argument) that
\begin{equation}\label{jhjijnsg}
\|u-\mathcal{R}_h u\|_{L^2(\Omega_T)}\leq C (h+h^{2-2\alpha})^2 \|g\|_{L^2(\Omega_T)}.
\end{equation}
Choose $v^*$ to be the solution of the following linear problem: For all $w\in L^2(0,T,H^1_0(\Omega))$
\begin{equation}\label{jhsg}
\mathcal{A}_T[w,v^*]=(w,u-\mathcal{R}_h u)_{L^2(\Omega_T)}.
\end{equation}
Taking $w=u-\mathcal{R}_h u$ in \eqref{jhsg}, we obtain that
\begin{equation}\label{jhjihjjhgjnsg}
\|u-\mathcal{R}_h u\|_{L^2(\Omega_T)}^2=\mathcal{A}_T[u-\mathcal{R}_h u,v^*-\mathcal{R}_h v^*].
\end{equation}
Hence by Cauchy Schwartz inequality and \eqref{jhsgjdhgsdh3},
\begin{equation}\label{dddez}
\|u-\mathcal{R}_h u\|_{L^2(\Omega_T)}^2\leq C (h+h^{2-2\alpha}) \|g\|_{L^2(\Omega_T)}\big(\mathcal{A}_T[v^*-\mathcal{R}_h v^*]\big)^\frac{1}{2}.
\end{equation}
Using Theorem \ref{thnloc} again, we obtain that
\begin{equation}\label{jhsgjdhgsdh3n}
\big(\A_T[v^*-\mathcal{R}_h v^*]\big)^\frac{1}{2}\leq C \|u-\mathcal{R}_h u\|_{L^2(\Omega_T)} (h+h^{2-2\alpha}).
\end{equation}
Combining \eqref{jhsgjdhgsdh3n} with \eqref{dddez} leads to \eqref{jhjijnsg}.
Combining \eqref{hjhgwhgwguygyuygytsdeeye} with $v=\mathcal{R}_h u$, \eqref{jhjijnsg} and \eqref{jhsgjdhgsdh3}  leads to
\begin{equation}\label{hjhgwhgsjhhydwguygyuygytsdeeye}
\begin{split}
\big\|(u-u_h)(.,T)\big\|_{L^2(\Omega)}^2+\A_T[u-u_h]\leq  C (h+h^{2-2\alpha})^2 \|g\|_{L^2(\Omega_T)}^2,
\end{split}
\end{equation}
which concludes the proof of Theorem \ref{kshjhgshdhg}.
\end{proof}

\paragraph{Discretization in time.}

Let $(t_n)$ be a discretization of $[0,T]$ with time-steps $|t_{n+1}-t_n|=\Delta t$.
 Write $Z_T^h$, the subspace of $L^2(0,T,V_h)$, such that
\begin{equation}
\begin{split}
Z_T^h= \left\{v\in
L^2(0,T,V_h)\,:\,v(x,t)=\sum_{i}c_i(t)\psi_i(x),\mbox{
$c_i(t)$ are constants on }(t_n,t_{n+1}]\right\}.
\end{split}
\end{equation}
\noindent Write $u_{h,\Delta t}$, the solution in $Z_T^h$ of the following system of
implicit weak formulation (such that $u_{h,\Delta t}(x,0)\equiv 0$): For each $n$ and $\psi\in V_h$,
\begin{equation}\label{ghjddwsdcszdbbsfh52}
\begin{split}
 \big(\psi,
u_{h,\Delta t}(t_{n+1})\big)_{L^2(\Omega)}=&\big(\psi,
u_{h,\Delta t}(t_{n})\big)_{L^2(\Omega)}\\&-|\Delta t| \,a\big[\psi,u_{h,\Delta t}(t_{n+1})]+ \big(\psi,\int_{t_n}^{t_{n+1}} g(t) \,dt\big)_{L^2(\Omega)}.
\end{split}
\end{equation}
Then, we have the following theorem
\begin{Theorem}\label{jskjdhdcxjdh723}
We have
\begin{equation}\label{gadzoszwscxedd3dbbfmbh52}
\begin{split}
 \big\| (u-u_{h,\Delta t})(T)\big\|_{L^2(\Omega)}+
&\|u-u_{h,\Delta t}\|_{L^2(0,T,H^1_0(\Omega))} \leq C \big(|\Delta t|+h+h^{2-2\alpha}\big)
\\&\Big(\|\partial_t
g\|_{L^2(0,T,H^{-1}(\Omega))}+\big\|g(.,0)\big\|_{L^2(\Omega)}+\|g\|_{L^2(\Omega_T)}\Big).
\end{split}
\end{equation}
\end{Theorem}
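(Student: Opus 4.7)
The plan is to insert the semi-discrete Galerkin approximation $u_h\in L^2(0,T,V_h)$ of Theorem~\ref{kshjhgshdhg} and decompose
\begin{equation*}
u-u_{h,\Delta t}\;=\;(u-u_h)+(u_h-u_{h,\Delta t}).
\end{equation*}
Theorem~\ref{kshjhgshdhg} already bounds the first summand in both norms appearing in \eqref{gadzoszwscxedd3dbbfmbh52} by $C(h+h^{2-2\alpha})\|g\|_{L^2(\Omega_T)}$. The work therefore reduces to the pure time-discretization error: bounding $\theta^n:=u_h(\cdot,t_n)-u_{h,\Delta t}(\cdot,t_n)\in V_h$ associated with backward Euler applied inside the fixed finite-dimensional space $V_h$ by $C|\Delta t|$ times the data norms on the right hand side of \eqref{gadzoszwscxedd3dbbfmbh52}.

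The key a priori ingredient is a bound on $\partial_t u_h$. Differentiating \eqref{jhgjhgjghggg} in $t$, the function $w:=\partial_t u_h\in V_h$ satisfies $(\psi,\partial_t w)_{L^2(\Omega)}+a[\psi,w]=(\psi,\partial_t g)_{L^2(\Omega)}$ for all $\psi\in V_h$. Evaluating \eqref{jhgjhgjghggg} at $t=0$ and using $u_h(\cdot,0)=0$ shows that $w(0)$ is the $L^2(\Omega)$-orthogonal projection of $g(\cdot,0)$ onto $V_h$, so $\|w(0)\|_{L^2(\Omega)}\leq\|g(\cdot,0)\|_{L^2(\Omega)}$. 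Testing with $\psi=w$, integrating in time, and using Young's inequality on the $(\partial_t g,w)_{L^2(\Omega)}$ term yield
\begin{equation*}
\sup_{t\in[0,T]}\|\partial_t u_h(\cdot,t)\|_{L^2(\Omega)}^{2}+\int_{0}^{T}a[\partial_t u_h]\,dt\;\leq\;C\Bigl(\|g(\cdot,0)\|_{L^2(\Omega)}^{2}+\|\partial_t g\|_{L^2(0,T,H^{-1}(\Omega))}^{2}\Bigr).
\end{equation*}

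For the time error itself, subtracting \eqref{ghjddwsdcszdbbsfh52} from the integrated form of \eqref{jhgjhgjghggg} on $(t_n,t_{n+1}]$ gives the error equation
\begin{equation*}
(\psi,\theta^{n+1}-\theta^n)_{L^2(\Omega)}+|\Delta t|\,a[\psi,\theta^{n+1}]\;=\;\int_{t_n}^{t_{n+1}}a[\psi,u_h(\cdot,t)-u_h(\cdot,t_{n+1})]\,dt
\end{equation*}
for every $\psi\in V_h$. Writing $u_h(\cdot,t)-u_h(\cdot,t_{n+1})=-\int_{t}^{t_{n+1}}\partial_s u_h(\cdot,s)\,ds$ and applying Fubini rewrites the right hand side as $-\int_{t_n}^{t_{n+1}}(s-t_n)\,a[\psi,\partial_s u_h(\cdot,s)]\,ds$, which carries the right $|\Delta t|$ scaling. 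Testing with $\psi=\theta^{n+1}$, using $(\theta^{n+1}-\theta^n,\theta^{n+1})_{L^2(\Omega)}\geq\tfrac12(\|\theta^{n+1}\|_{L^2(\Omega)}^{2}-\|\theta^n\|_{L^2(\Omega)}^{2})$, absorbing half of $|\Delta t|\,a[\theta^{n+1}]$ on the left by Young, summing from $n=0$ with $\theta^{0}=0$, and invoking the previous a priori estimate yields
\begin{equation*}
\|\theta^N\|_{L^2(\Omega)}^{2}+|\Delta t|\sum_{n}a[\theta^{n+1}]\;\leq\;C|\Delta t|^{2}\Bigl(\|g(\cdot,0)\|_{L^2(\Omega)}^{2}+\|\partial_t g\|_{L^2(0,T,H^{-1}(\Omega))}^{2}\Bigr).
\end{equation*}
Converting the last left hand side into $\|u_h-u_{h,\Delta t}\|_{L^2(0,T,H^1_0(\Omega))}^{2}$ costs only an extra piece $\int_0^T\|\nabla(u_h(\cdot,t)-u_h(\cdot,t_{n(t)+1}))\|^{2}\,dt\leq C|\Delta t|^{2}\int_0^T a[\partial_s u_h]\,ds$, again controlled by the same right hand side; a triangle inequality with the Theorem~\ref{kshjhgshdhg} bound on $u-u_h$ then delivers \eqref{gadzoszwscxedd3dbbfmbh52}.

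The main obstacle is the backward-Euler consistency analysis: the residual must be arranged so that it scales linearly in $|\Delta t|$ and pairs cleanly with $\int_0^T a[\partial_s u_h]\,ds$ without picking up any $h$- or $\lambda_{\max}(a)$-dependent factor that would spoil the rate. The Fubini rewrite above, together with the fact that $u_h(\cdot,0)=0$ forces $\partial_t u_h(\cdot,0)$ to be the $L^2$-projection of the source rather than an $h$-dependent object, is exactly what makes the $|\Delta t|$ rate come out — nearly identical bookkeeping to the backward-Euler treatment of the harmonic-coordinate method in \cite{MR2377253} that the authors already followed in the semi-discrete step.
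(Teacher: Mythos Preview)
Your proposal is correct and follows exactly the route the paper indicates: the paper does not spell out the argument but refers to Theorem~1.6 of \cite{MR2377253}, which is precisely the backward-Euler analysis you reproduce here (split $u-u_{h,\Delta t}$ through $u_h$, invoke the semi-discrete bound of Theorem~\ref{kshjhgshdhg}, derive the a~priori estimate on $\partial_t u_h$ via differentiation of \eqref{jhgjhgjghggg}, and run the $\theta^n$ error equation with the Fubini rewrite). The identification of $\partial_t u_h(\cdot,0)$ as the $L^2$-projection of $g(\cdot,0)$ onto $V_h$ and the ensuing $|\Delta t|$-rate bookkeeping are the same as in that reference.
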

The proof of Theorem \ref{jskjdhdcxjdh723} is similar to that of Theorem 1.6 of  \cite{MR2377253} and will not be given here.
Observe that  homogenization in space allows for a discretization in time with time steps  $\mathcal{O}(h+h^{2-2\alpha})$ without compromising the accuracy of the method.

\section{The basis remains accurate for hyperbolic PDEs.}\label{sechyperbolic}

Consider the hyperbolic partial differential equation
\begin{equation}\label{huperboliccalarproblem0}
\begin{cases}
   \rho(x) \partial_{t}^2 u(x,t) -\diiv \Big(a(x)  \nabla u(x,t)\Big)=g(x,t) \quad  (x,t) \in \Omega_T;\, g \in L^2(\Omega_T), \\
    u=0 \quad \text{on}\quad  \partial \Omega_T,\\
    \partial_t u=0  \quad \text{on}\quad  \Omega \times \{t=0\},
    \end{cases}
\end{equation}
where $a$, $\Omega$, $\Omega_T$ and $\partial \Omega_T$ are defined as in Section \ref{kjsdkjhdkjshkd}. In particular, $a$ is assumed to be only uniformly elliptic and bounded ($a_{i,j}\in L^\infty(\Omega)$). We will further assume that $\rho$ is uniformly bounded from below and above ($\rho\in L^\infty(\Omega)$ and $\operatorname{essinf} \rho(x)\geq \rho_{\min}>0$). It is straightforward to extend the results presented here to nonzero boundary conditions (provided that frequencies larger than $1/h$ remain weakly excited, because the waves equation preserves energy and
homogenization schemes can not recover energies put into high frequencies, see \cite{OwZh06c}). For the sake of conciseness, we will give those results with zero boundary conditions. PDE \eqref{huperboliccalarproblem0} corresponds to acoustic wave equations in a medium with density $\rho$ and bulk modulus $a^{-1}$.

Let $V_h$ be the finite-dimensional approximation space defined in \eqref{Vh}. Let $u_h$ be the finite element solution of \eqref{huperboliccalarproblem0}, i.e., $u_h$ can be decomposed as
\begin{equation}
u_h(x,t)=\sum_i c_i(t) \psi_i(x),
\end{equation}
and solves for all $j$
\begin{equation}\label{jhgjhgjghggsg}
(\psi_j, \partial_t^2 u_h)_{L^2(\rho,\Omega)}=-a[\psi_j,u_h]+(\psi_j, g)_{L^2(\Omega)},
\end{equation}
where
\begin{equation}\label{jhgjhgdddjghggsg}
(v, w)_{L^2(\rho,\Omega)}:=\int_{\Omega} v\,w\,\rho.
\end{equation}

\begin{Theorem}\label{kshejcfchgshdhg}
If  $\partial_t g\in L^2(\Omega_T)$ and $ g(x,0)\in L^2(\Omega)$, then
\begin{equation}
\begin{split}
 \big\|\partial_t (u-u_h)(.,T)\big\|_{L^2(\Omega)}&+\big\|u-u_h\big\|_{L^2(0,T,H^1_0(\Omega))}\leq\\& C \big(\|\partial_t g\|_{L^2(\Omega_T)}+\| g(x,0)\|_{L^2(\Omega)}\big) (h+h^{2-2\alpha}).
 \end{split}
\end{equation}
\end{Theorem}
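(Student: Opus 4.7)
The plan is to adapt the standard Galerkin error analysis for second-order hyperbolic equations \`a la Dupont--Baker, with the role of elliptic regularity played by Theorem \ref{thnloc}. The overall scheme parallels the parabolic proof of Theorem \ref{kshjhgshdhg}, except that the second-order-in-time structure demands an extra integration by parts in $t$.

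I would begin by establishing a priori regularity of $u$. Setting $w:=\partial_t u$, the function $w$ solves the wave equation $\rho\partial_t^2 w-\diiv(a\nabla w)=\partial_t g$ with $w(\cdot,0)=0$ and $\rho\partial_t w(\cdot,0)=g(\cdot,0)$. The standard hyperbolic energy identity (multiply by $\partial_t w$, integrate, apply Gronwall) gives
\begin{equation*}
\|\partial_t u\|_{L^\infty(0,T;H^1_0(\Omega))}+\|\partial_t^2 u\|_{L^\infty(0,T;L^2(\Omega))}\leq C\bigl(\|g(\cdot,0)\|_{L^2(\Omega)}+\|\partial_t g\|_{L^2(\Omega_T)}\bigr).
\end{equation*}
Next, define the Ritz projection $\mathcal{R}_h:H^1_0(\Omega)\to V_h$ by $a[\mathcal{R}_h v,w]=a[v,w]$ for all $w\in V_h$, and split $u-u_h=\eta+\theta$ with $\eta:=u-\mathcal{R}_h u$ and $\theta:=\mathcal{R}_h u-u_h\in V_h$. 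Because $u(\cdot,t)$ solves $-\diiv(a\nabla u(\cdot,t))=g(\cdot,t)-\rho\partial_t^2 u(\cdot,t)\in L^2(\Omega)$ pointwise in $t$, applying Theorem \ref{thnloc} slicewise in $t$ together with the regularity above yields $\|\eta\|_{L^\infty(0,T;H^1_0)}\leq C(h+h^{2-2\alpha})(\|g(\cdot,0)\|_{L^2}+\|\partial_t g\|_{L^2(\Omega_T)})$. An Aubin--Nitsche duality argument mirroring equations \eqref{jhsg}--\eqref{jhjijnsg}, combined with the commutation $\partial_t\mathcal{R}_h=\mathcal{R}_h\partial_t$ on $\partial_t u\in H^1_0$, yields the companion bound $\|\partial_t\eta\|_{L^\infty(0,T;L^2)}\leq C(h+h^{2-2\alpha})(\|g(\cdot,0)\|_{L^2}+\|\partial_t g\|_{L^2(\Omega_T)})$.

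Subtracting the continuous and discrete weak forms against $\psi\in V_h$ and using $a[\psi,\eta]=0$ produces the error equation $(\psi,\rho\partial_t^2\theta)_{L^2(\Omega)}+a[\psi,\theta]=-(\psi,\rho\partial_t^2\eta)_{L^2(\Omega)}$ for all $\psi\in V_h$, with $\theta(\cdot,0)=\partial_t\theta(\cdot,0)=0$. Taking $\psi=\partial_t\theta\in V_h$ and integrating in time gives the energy identity
\begin{equation*}
\tfrac{1}{2}\|\partial_t\theta(\cdot,T)\|^2_{L^2(\rho,\Omega)}+\tfrac{1}{2}a[\theta(\cdot,T),\theta(\cdot,T)]=-\int_0^T(\partial_t\theta,\rho\partial_t^2\eta)_{L^2(\Omega)}\,dt.
\end{equation*}
I would handle the right-hand side by one integration by parts in $t$, shifting a derivative from $\eta$ onto $\theta$; the boundary contribution at $t=T$ is absorbed via Young's inequality using the $\|\partial_t\eta\|_{L^\infty(L^2)}$ bound above. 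For the new volume integral $\int_0^T(\partial_t^2\theta,\rho\partial_t\eta)\,dt$, since $\partial_t^2\theta\in V_h$ and with $P_h$ denoting the $L^2(\rho,\Omega)$-projection onto $V_h$, it equals $\int_0^T(\partial_t^2\theta,\rho P_h\partial_t\eta)\,dt$; expanding via the error equation tested against $P_h\partial_t\eta\in V_h$ produces $-\int_0^T a[\theta,P_h\partial_t\eta]\,dt-\int_0^T(P_h\partial_t\eta,\rho\partial_t^2\eta)_{L^2}\,dt$, and a further integration by parts in $t$ combined with the commutation $P_h\partial_t=\partial_t P_h$ collapses the second integral into $\tfrac{1}{2}\|P_h\partial_t\eta(\cdot,T)\|^2_{L^2(\rho)}$. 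Cauchy--Schwartz, Young's inequality, and Gronwall then deliver $\|\partial_t\theta(\cdot,T)\|_{L^2(\rho,\Omega)}+\|\theta\|_{L^2(0,T;H^1_0(\Omega))}\leq C(h+h^{2-2\alpha})(\|g(\cdot,0)\|_{L^2}+\|\partial_t g\|_{L^2(\Omega_T)})$, and the triangle inequality with the $\eta$-bound closes the estimate. The principal obstacle, absent from the parabolic case, is the $\partial_t^2\eta$ term: a direct approach would require applying Theorem \ref{thnloc} to $\partial_t^2 u$, which in turn needs $\partial_t^2 u\in H^1_0(\Omega)$, whereas the hypothesis $\partial_t g\in L^2$ only furnishes $\partial_t^2 u\in L^\infty(0,T;L^2)$; the integration-by-parts-in-time trick combined with the finite-dimensional telescoping afforded by $P_h$ is what bridges this regularity gap.
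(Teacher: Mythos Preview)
Your regularity estimate for $u$, the use of the Ritz projection $\mathcal{R}_h$, the Aubin--Nitsche duality, and the final Gronwall step are all in the spirit of the paper's proof. However, your route through the Dupont--Baker split $u-u_h=\eta+\theta$ forces you to confront $\partial_t^2\eta$, and the device you introduce to avoid it creates a genuine gap. After your integration by parts and substitution from the error equation, the term $\int_0^T a[\theta,P_h\partial_t\eta]\,dt$ must be bounded by something of order $(h+h^{2-2\alpha})$ times $\big(\mathcal{A}_T[\theta]\big)^{1/2}$. This requires $\|P_h\partial_t\eta\|_{L^2(0,T;H^1_0)}$ to be $O(h+h^{2-2\alpha})$, i.e., $H^1$-stability of the $L^2(\rho)$-projection $P_h$ onto $V_h$. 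For standard Lagrange elements on quasi-uniform meshes this is a classical (and already nontrivial) fact, but $V_h=\operatorname{span}(\psi_i)$ is built from solutions of the localized elliptic problems \eqref{loclakjlkajlei23}, and neither $H^1$-stability of $P_h$ nor an inverse inequality on $V_h$ is established anywhere in the paper. Without one of these, the term $a[\theta,P_h\partial_t\eta]$ is uncontrolled and your Gronwall loop does not close (note that even granting an inverse inequality $\|w\|_{H^1}\leq Ch^{-1}\|w\|_{L^2}$ on $V_h$ and combining it with the duality bound $\|\partial_t\eta\|_{L^2}\leq C(h+h^{2-2\alpha})^2$ would still fail to give the stated rate for $\alpha\in[3/4,1)$).

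The paper avoids this difficulty entirely by \emph{not} splitting into $\eta+\theta$. It works directly with $e:=u-u_h$, tests Galerkin orthogonality with $\partial_t v-\partial_t u_h\in V_h$ where $\partial_t v:=\mathcal{R}_h\partial_t u$, and obtains
\[
\tfrac{1}{2}\|\partial_t e(T)\|_{L^2(\rho,\Omega)}^2+\tfrac{1}{2}a[e(T)]
=\big(\partial_t(u-v),\partial_t^2 e\big)_{L^2(\rho,\Omega_T)}+\mathcal{A}_T[\partial_t(u-v),\,e].
\]
The second term is handled exactly as you would: $(\mathcal{A}_T[\partial_t u-\mathcal{R}_h\partial_t u])^{1/2}\leq C(h+h^{2-2\alpha})$ by Theorem~\ref{thnloc} applied to $\partial_t u$. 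The first term is the key difference: rather than trying to make $\partial_t^2$ of a projection error small, the paper bounds $\|\partial_t^2 u\|_{L^2(\rho,\Omega_T)}$ and $\|\partial_t^2 u_h\|_{L^2(\rho,\Omega_T)}$ \emph{separately} by the data (testing the differentiated continuous and discrete equations with $\partial_t^2 u$ and $\partial_t^2 u_h$ respectively), so $\|\partial_t^2 e\|_{L^2(\rho,\Omega_T)}$ is merely $O(1)$. The required smallness then comes entirely from the duality bound $\|\partial_t u-\mathcal{R}_h\partial_t u\|_{L^2(\rho,\Omega_T)}\leq C(h+h^{2-2\alpha})^2$. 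This sidesteps any need for $P_h$, $H^1$-stability, or inverse estimates on $V_h$, and is the simpler fix to the regularity gap you correctly identified.
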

\begin{Remark}
We refer to \cite{SymVdo09} for an analysis of the sub-optimal rate of convergence
 associated with finite-difference simulation of wave propagation in discontinuous media (see also \cite{Brown84,SeiSymes94}).
We refer to \cite{OwZh06c} for an alternative upscaling strategy based on harmonic coordinates. If the medium is locally ergodic with long range correlations \cite{BalJing10} and also characterized by scale separation then we refer to HMM based methods \cite{EnHoRu10, AbGro10}.
Homogenization based methods require that frequencies larger than $1/h$ remain weakly excited. For high frequencies, and smooth media (or away from local resonances, e.g. local, nearly resonant cavities), we refer to the sweeping pre-conditioner method
\cite{EngYing10a, EngYing10b}.
\end{Remark}

\begin{proof}
Let $\mathcal{A}_T$ be the bilinear form on $L^2(0,T,H^1_0(\Omega))$ defined in \eqref{jhgjahgsjdg}.
Observe that for all $v \in L^2(0,T,V_h)$,
\begin{equation}\label{hjhkajsbxgwhsgwguygeye}
 \big(v, \partial_t^2 (u-u_h)\big)_{L^2(\rho,\Omega_T)}+\A_T[v,u-u_h]=0.
\end{equation}
Taking $\partial_t u-\partial_t u_h-(\partial_t u- \partial_t v)$ as a test function in \eqref{hjhkajsbxgwhsgwguygeye} and integrating in time,
we deduce that for $\partial_t v \in L^2(0,T,V_h)$,
\begin{equation}\label{hjlkjkhgdewhgwguygsdeeye}
\begin{split}
\frac{1}{2}&\big\|\partial_t (u-u_h)(.,T)\big\|_{L^2(\rho,\Omega)}^2+\frac{1}{2} a\big[(u-u_h)(.,T)\big]=\\& \big(\partial_t (u-v), \partial_t^2 (u-u_h)\big)_{L^2(\rho,\Omega_T)}+\A_T[\partial_t (u-v),u-u_h],
\end{split}
\end{equation}
where $(v,w)_{L^2(\rho,\Omega_T)}:=\int_0^T \int_\Omega v\,w\,\rho\,dx\,dt$.
Taking  the derivative of the hyperbolic equation
for $u$ in time, we obtain that
\begin{equation}\label{ksjhhdksjhd}
\partial_t^3 u-\diiv(a\nabla \partial_t u)=\partial_t g.
\end{equation}
Integrating \eqref{ksjhhdksjhd} against the test function $\partial_t^2 u$ and
 observing that $\partial_t^2 u(x,0)= g(x,0)$, we also obtain that
\begin{equation}\label{jhgkgjuUiyghhukjhjhggjghggg}
\big\|\partial_t^2 u (.,T)\big\|_{L^2(\rho,\Omega)}^2+a\big[\partial_t u(.,T)\big]\leq C \big(\|\partial_t g\|_{L^2(\Omega_T)}^2+\| g(x,0)\|_{L^2(\Omega)}^2\big).
\end{equation}
Similarly, we obtain that
\begin{equation}\label{jhgkgjruyUiyghhukjhjhggjghggg}
\big\|\partial_t^2 u_h (.,T)\big\|_{L^2(\rho,\Omega)}^2+a\big[\partial_t u_h(.,T)\big]\leq C \big(\|\partial_t g\|_{L^2(\Omega_T)}^2+\| g(x,0)\|_{L^2(\Omega)}^2\big).
\end{equation}

Take $\partial_t v=\mathcal{R}_h \partial_t u$ to be the projection of $\partial_t u$ onto $L^2(0,T,V_h)$ with respect to the bilinear form $\A_T$.
Observing that
$-\diiv(a\nabla \partial_t u)=\partial_t g-\partial_t^2 u$ with $(g-\partial_t^2 u) \in L^2(\Omega_T)$, we obtain from \eqref{jhgkgjuUiyghhukjhjhggjghggg} and Theorem \ref{thnloc} that
\begin{equation}\label{jhsgjshhdgyydhgsdh3}
\big(\A_T[u-\mathcal{R}_h u]\big)^\frac{1}{2}\leq C \big(\|\partial_t g\|_{L^2(\Omega_T)}+\| g(x,0)\|_{L^2(\Omega)}\big) (h+h^{2-2\alpha}).
\end{equation}
Furthermore, using the same duality argument as in the parabolic case, we obtain that
\begin{equation}\label{ddjhgjgjjdez}
\|u-\mathcal{R}_h u\|_{L^2(\rho,\Omega_T)}\leq C (h+h^{2-2\alpha})^2 \big(\|\partial_t g\|_{L^2(\Omega_T)}+\|g(x,0)\|_{L^2(\Omega)}\big).
\end{equation}
Using Cauchy-Schwartz and Minkowski inequalities  and the above estimates in  \eqref{hjlkjkhgdewhgwguygsdeeye}, we obtain that
\begin{equation}\label{hjlkjkhgdewhgwguygsdeejjye}
\begin{split}
&\big\|\partial_t (u-u_h)(.,T)\big\|_{L^2(\rho,\Omega)}^2+ a\big[(u-u_h)(.,T)\big]\leq\\& C (h+h^{2-2\alpha}) \big(\A_T[u-u_h]+
\|\partial_t g\|_{L^2(\Omega_T)}+\|g(x,0)\|_{L^2(\Omega)}\big).
\end{split}
\end{equation}
We conclude using Grownwall's lemma.
\end{proof}

\begin{table}[!tbp]
\begin{center}
\begin{tabular}{|c||c|c|c|}
\hline $h$ &  $L^2$ & $H^1$ & $L^{\infty}$
\tabularnewline \hline \hline
\begin{tabular}{c}
0.5\tabularnewline \hline 0.25\tabularnewline \hline
0.125\tabularnewline \hline 0.0625\tabularnewline
\end{tabular}&
\begin{tabular}{c}
0.0119\tabularnewline \hline 0.0057\tabularnewline \hline
0.0027\tabularnewline \hline 0.0005\tabularnewline
\end{tabular}&
\begin{tabular}{c}
0.0913\tabularnewline \hline 0.0664\tabularnewline \hline
0.0482\tabularnewline \hline 0.0207\tabularnewline
\end{tabular}&
\begin{tabular}{c}
0.0157\tabularnewline \hline 0.0115\tabularnewline \hline
0.0075\tabularnewline \hline 0.0032\tabularnewline
\end{tabular}
\tabularnewline \hline
\end{tabular}
\caption{Example 1 of Section 3 of \cite{MR2292954} (trigonometric multi-scale, see also \cite{MiYu06}) with
$\alpha=1/2$. \label{tab:example1-1}}
\end{center}
\end{table}

\section{Numerical experiments.}\label{numexperiments}

\begin{figure}[httb]
  \begin{center}
    \subfigure[$L^2$ error]
    {\includegraphics[width=0.35\textwidth,height= 0.3\textwidth]{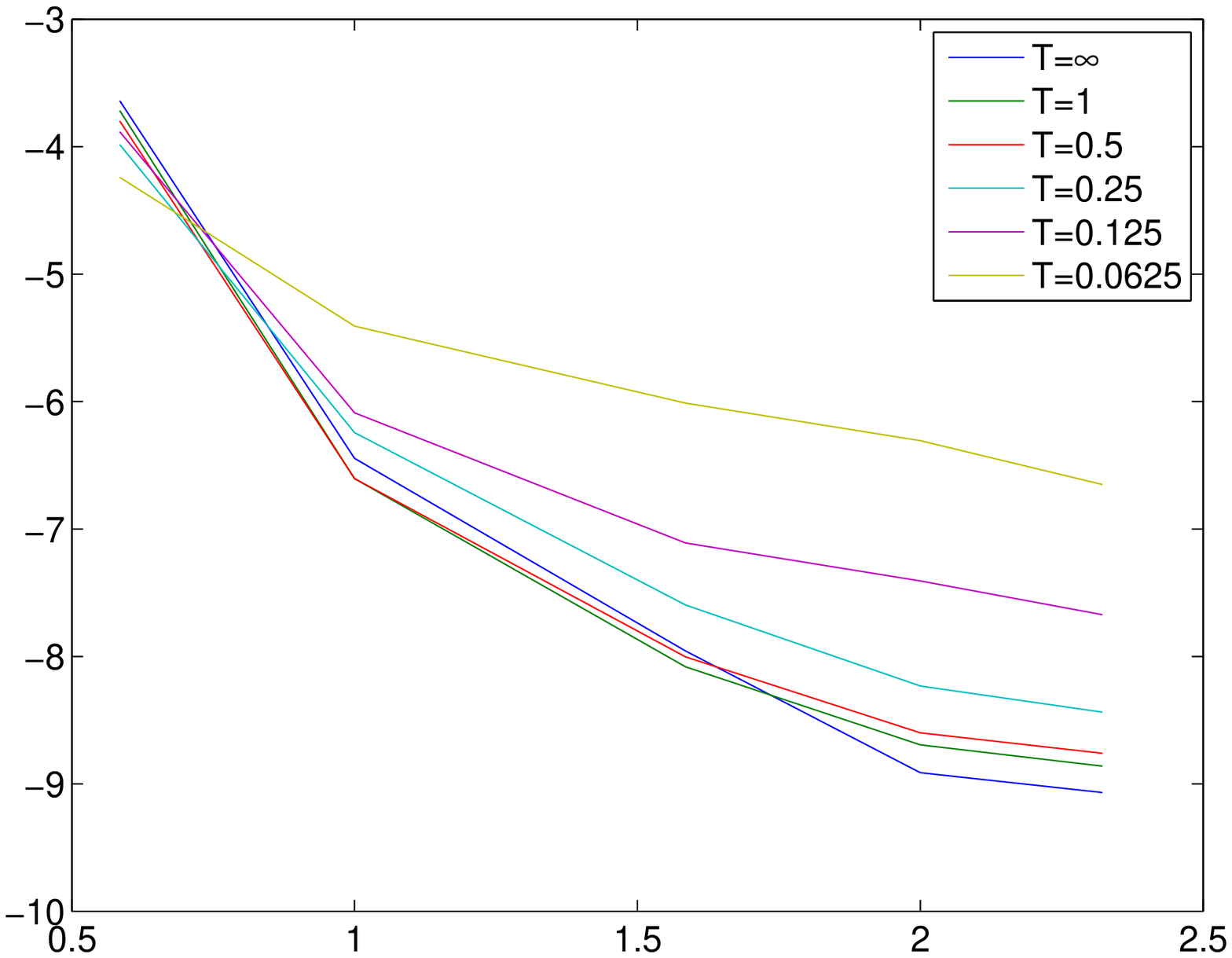}}
    \goodgap
    \subfigure[$H^1$ error]
    {\includegraphics[width=0.35\textwidth,height= 0.3\textwidth]{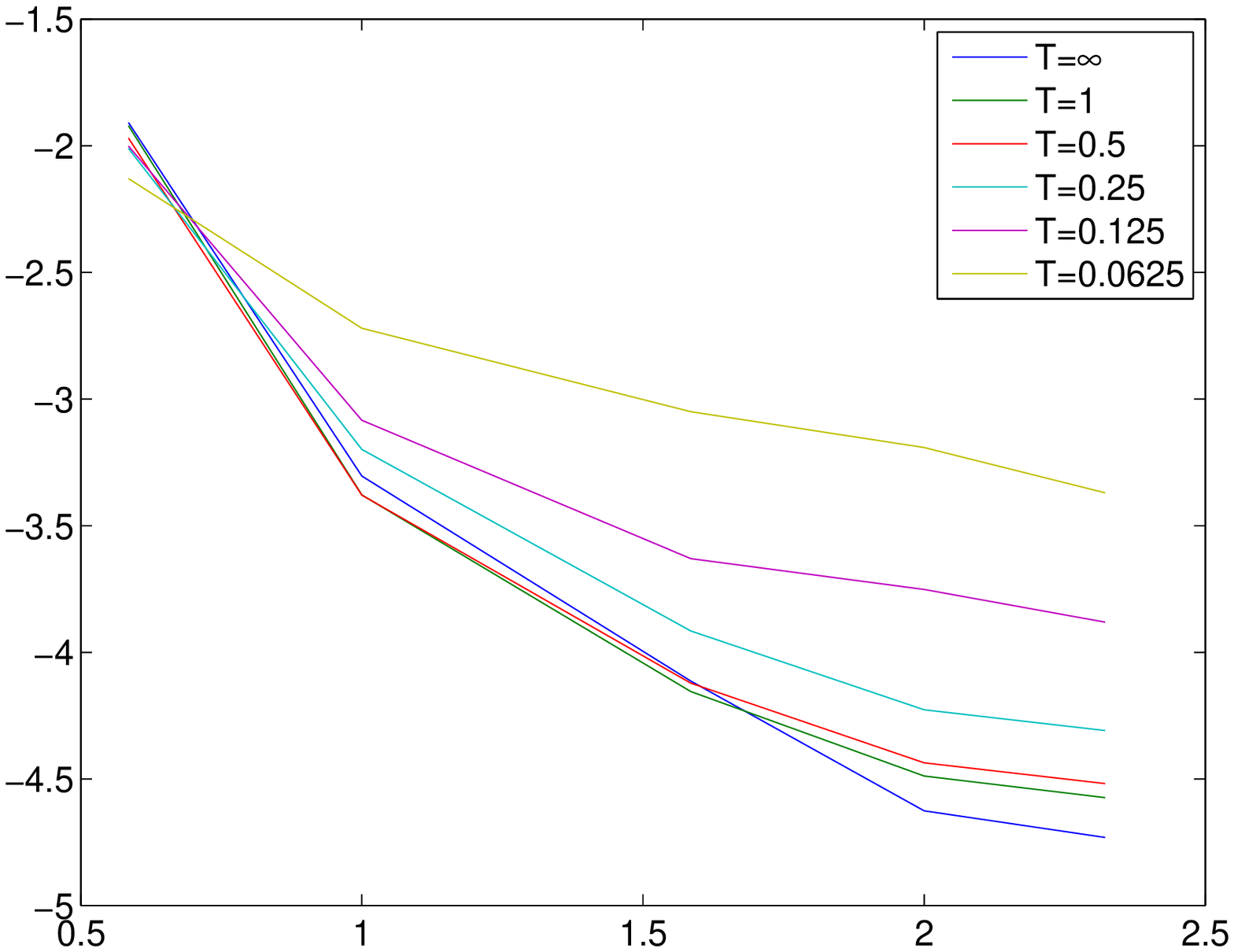}}
    \caption{Example 5 of Section 3 of \cite{MR2292954} (percolation at criticality). Logarithm (in base $2$) of the error with respect to $\log_2(h_0/h)$ (for $h=0.125$)  and the value of $T$ used in \eqref{loclakjlkajlei23}.}
    \label{fig:example4-2}
  \end{center}
\end{figure}
\begin{figure}[httb]
  \begin{center}
    \subfigure[$L^2$ error]
    {\includegraphics[width=0.35\textwidth,height= 0.3\textwidth]{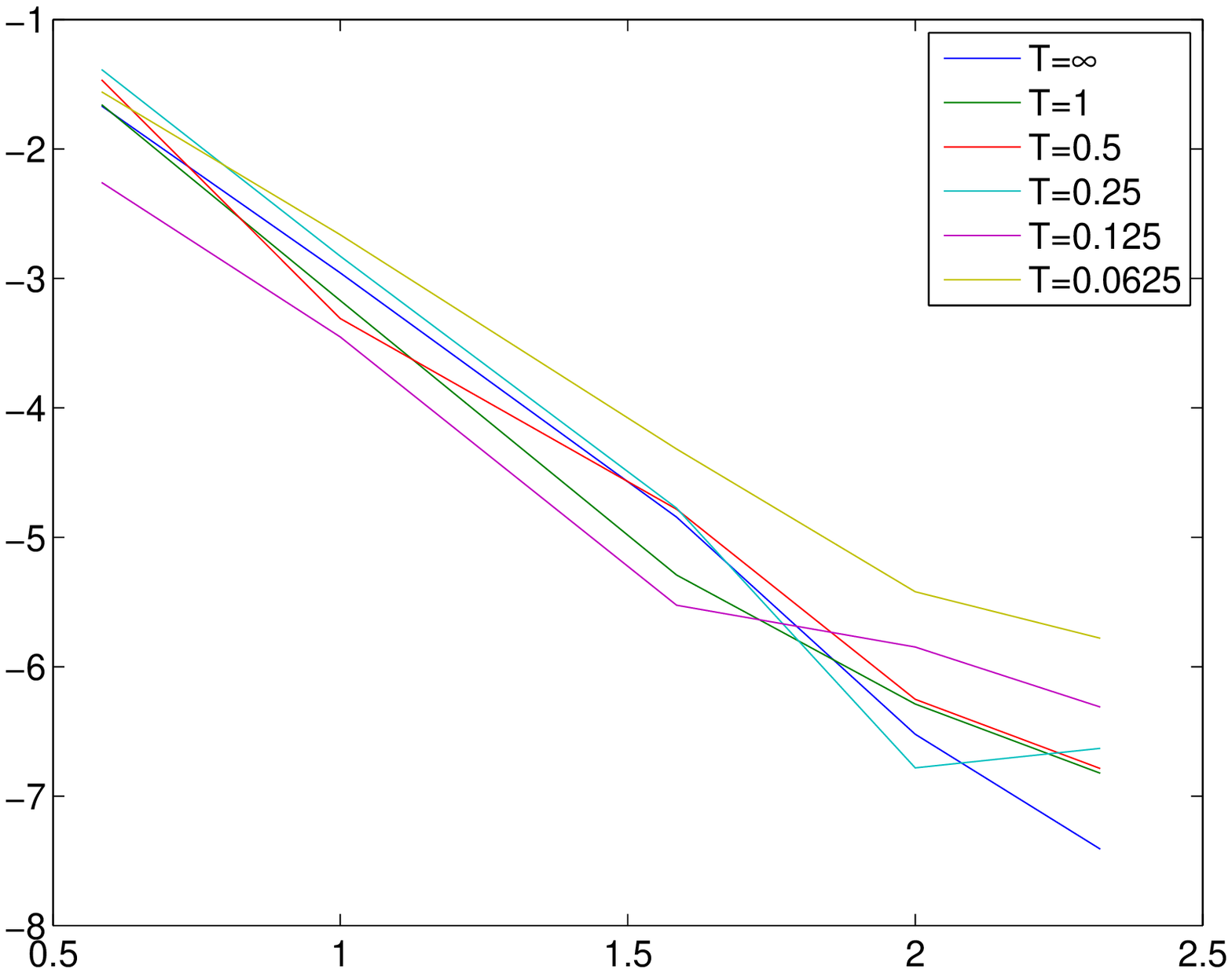}}
    \goodgap
    \subfigure[$H^1$ error]
    {\includegraphics[width=0.35\textwidth,height= 0.3\textwidth]{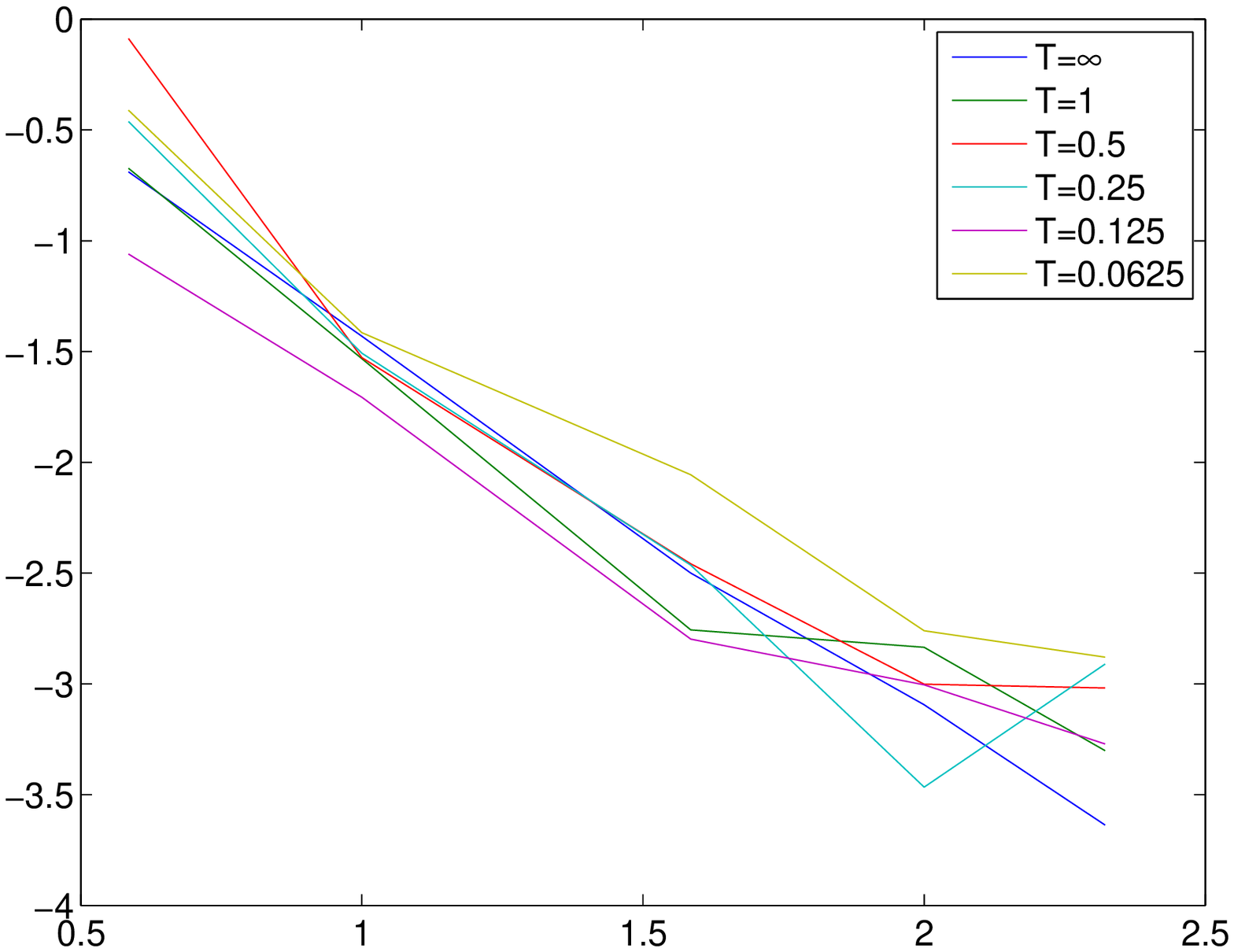}}
    \caption{Example 3 of Section 3 of \cite{MR2292954} (exponential of a sum of trigonometric functions with strongly overlapping frequencies). Logarithm (in base $2$) of the error with respect to $\log_2(h_0/h)$ (for $h=0.125$)  and the value of $T$ used in \eqref{loclakjlkajlei23}.}
    \label{fig:example3-2}
  \end{center}
\end{figure}

\begin{figure}[!tbp]
  \begin{center}
    \includegraphics[width=0.35\textwidth,height= 0.3\textwidth]{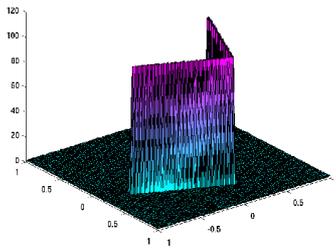}
    \caption{High conductivity channel.}\label{exa:channel}
\end{center}
\end{figure}

\begin{figure}[httb]
  \begin{center}
    \subfigure[$L^2$ error]
    {\includegraphics[width=0.35\textwidth,height= 0.3\textwidth]{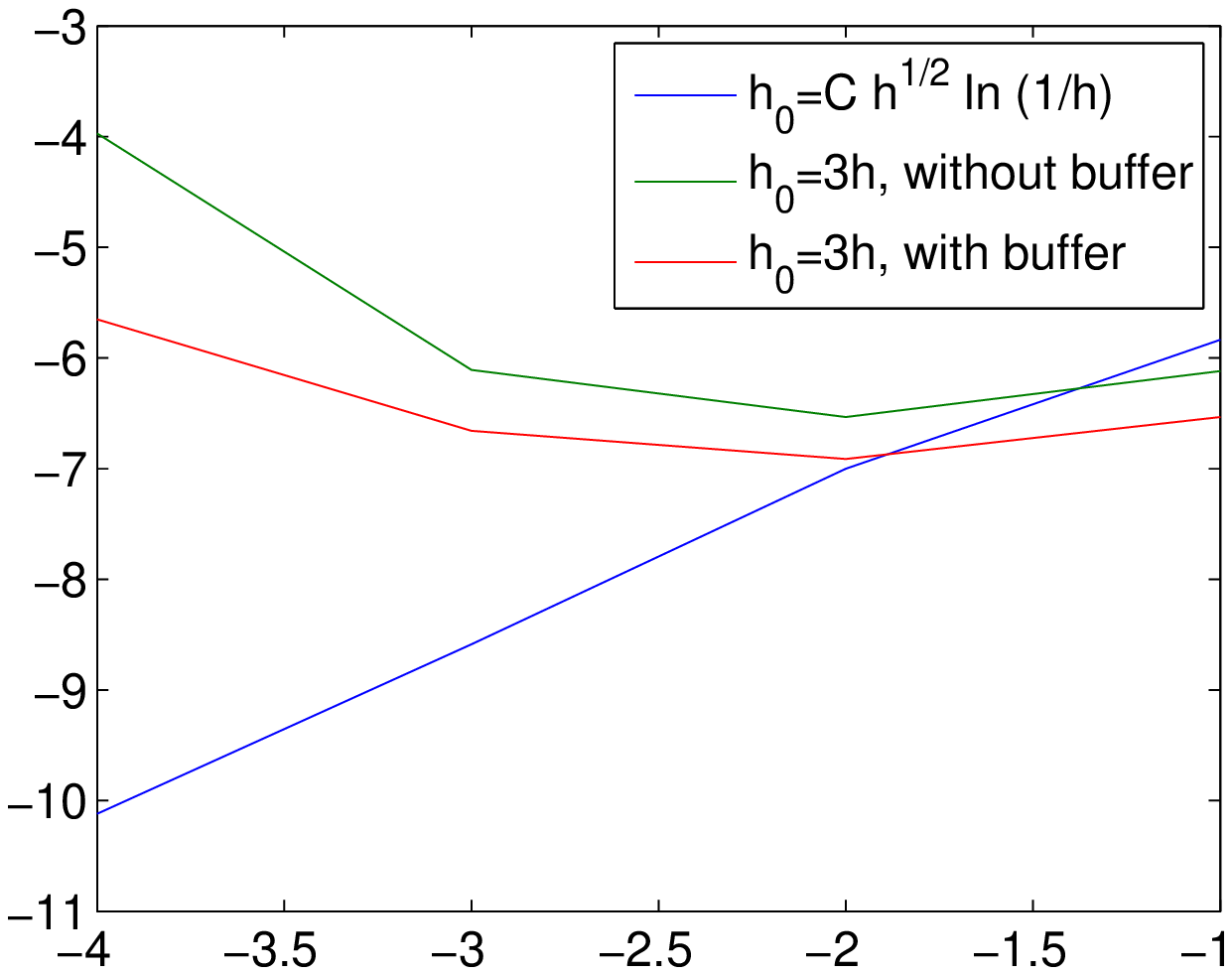}}
    \goodgap
    \subfigure[$H^1$ error]
    {\includegraphics[width=0.35\textwidth,height= 0.3\textwidth]{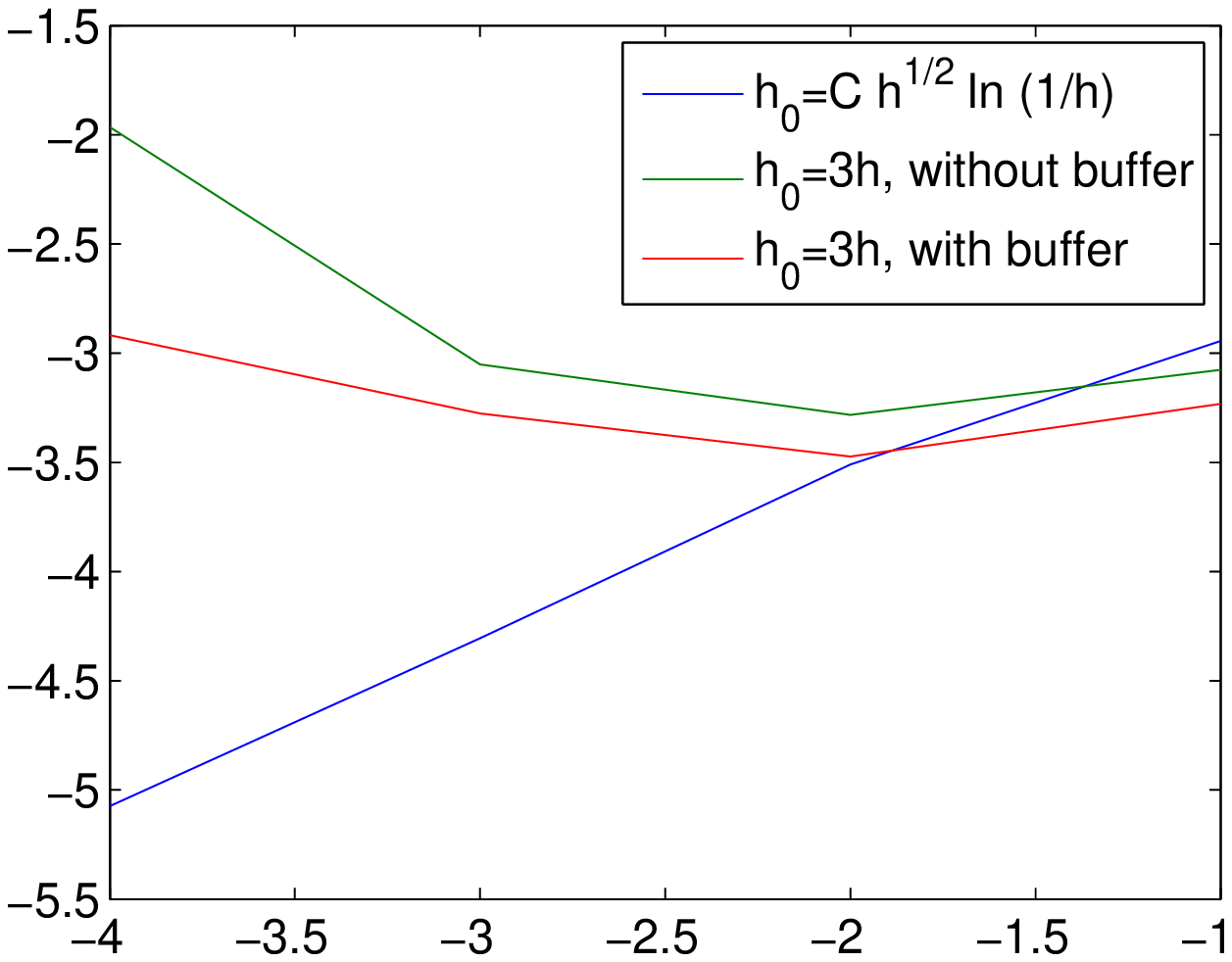}}
    \caption{High conductivity channel (Figure \ref{exa:channel}). The $x$-axis shows $\log_2(h)$,
    the $y$-axis shows the $\log_2$ of the error in $L^2$ and $H^1$-norm.  The three cases for the localization are $h_0=\mathcal{O}(\sqrt{h}\ln \frac{1}{h})$ with a buffer around the high conductivity channel (see Sub-section \ref{highcontrast}) of size $\mathcal{O}(\sqrt{h}\ln \frac{1}{h})$, $h_0=3h$ with no buffer around the high conductivity channel and $h_0=3h$ with a buffer around the high conductivity channel of size $3h$.}
    \label{fig:example2-3}
  \end{center}
\end{figure}

\begin{figure}[httb]
  \begin{center}
    \subfigure[$u$]
    {\includegraphics[width=0.35\textwidth,height= 0.3\textwidth]{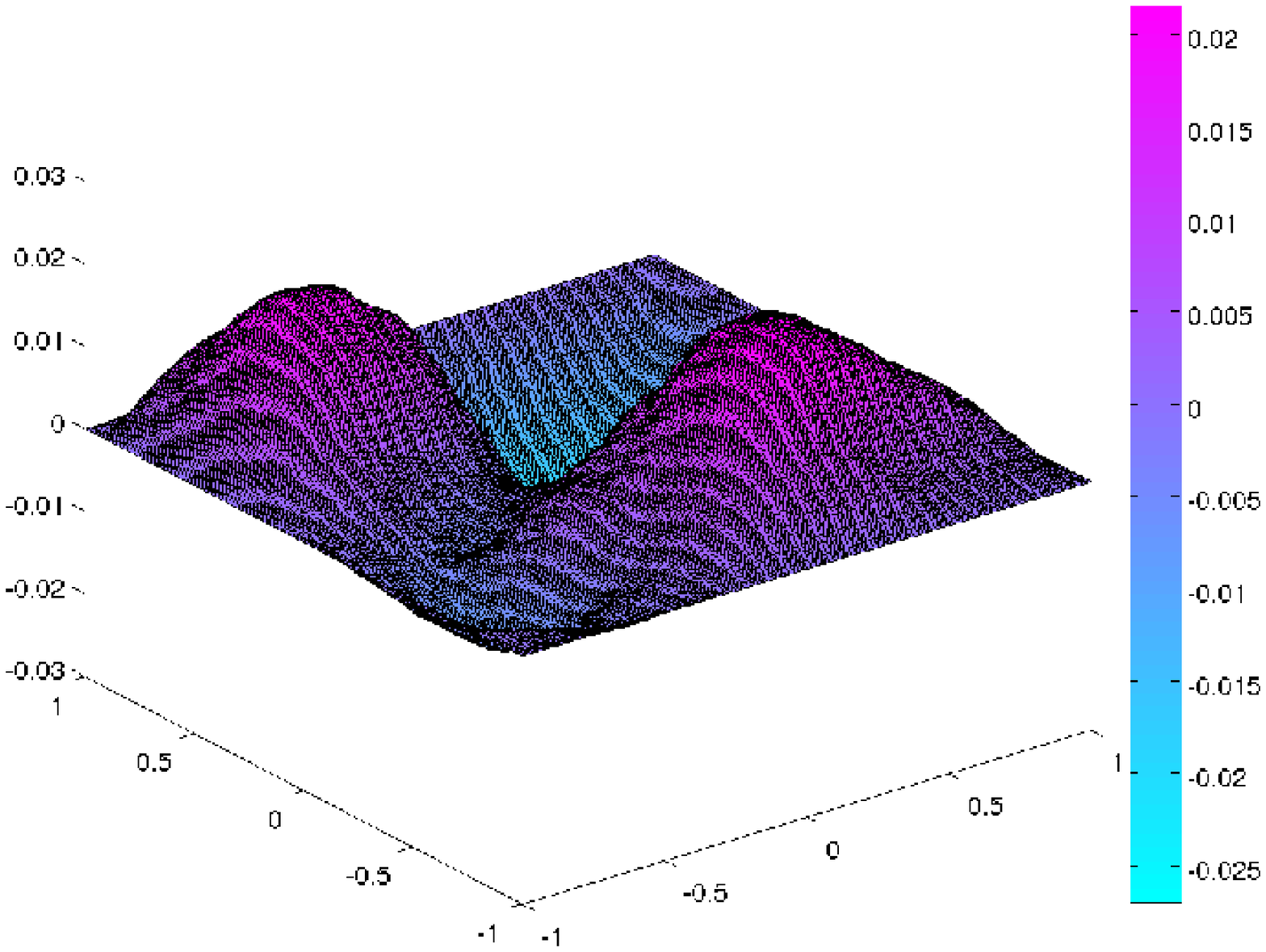}}
    \goodgap
    \subfigure[$u_h$]
    {\includegraphics[width=0.35\textwidth,height= 0.3\textwidth]{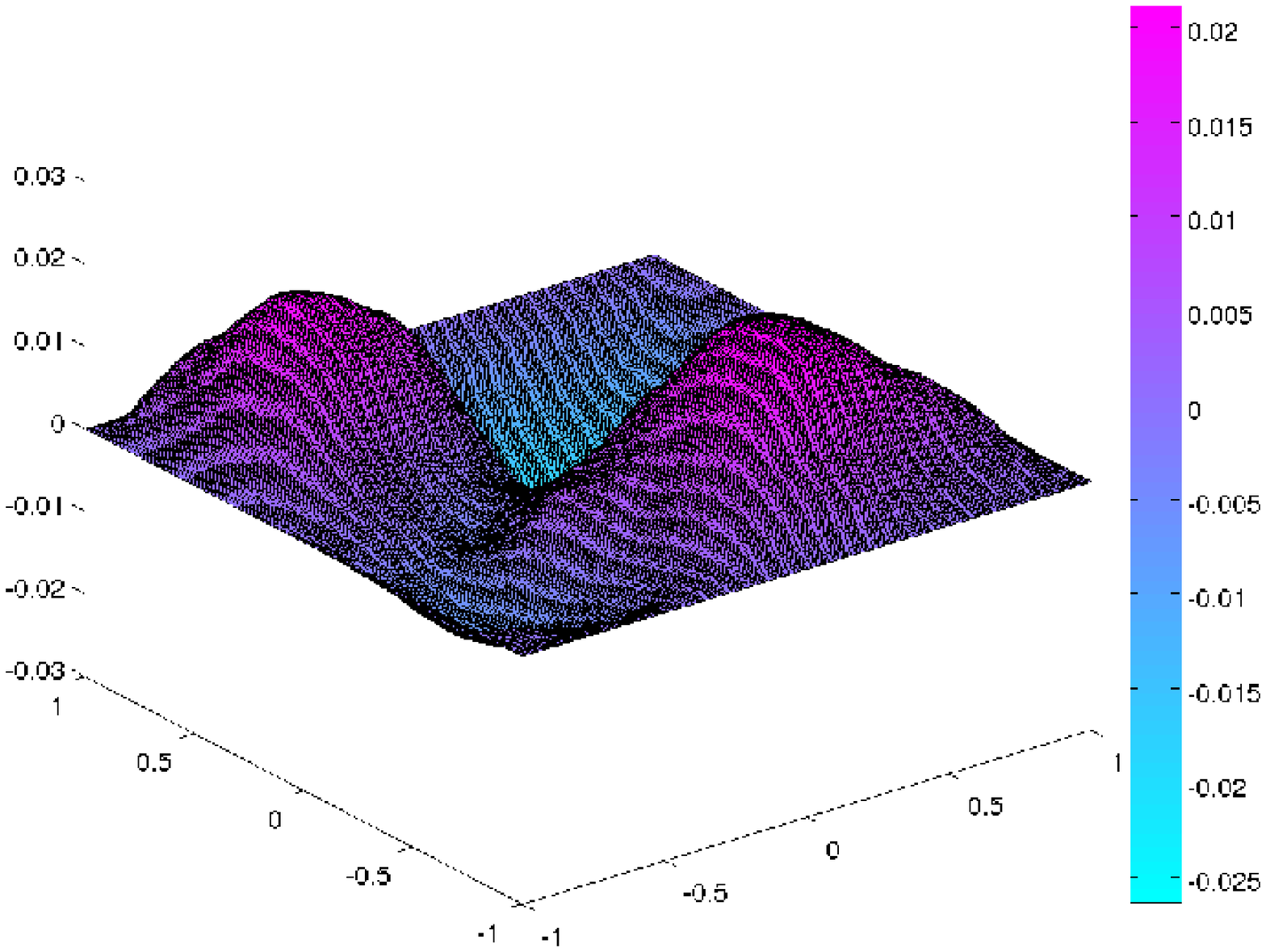}}
    \caption{Wave equation. Trigonometric case, fine mesh solution, $h=0.125$, $h_0=3h$, $T=h$. The $L^2$, $H^1$ and $L^\infty$ relative numerical errors are $0.0339$, $0.1760$ and $0.0235$.}
    \label{fig:wave2}
  \end{center}
\end{figure}

\begin{figure}[httb]
  \begin{center}
    \subfigure[$u$]
    {\includegraphics[width=0.35\textwidth,height= 0.3\textwidth]{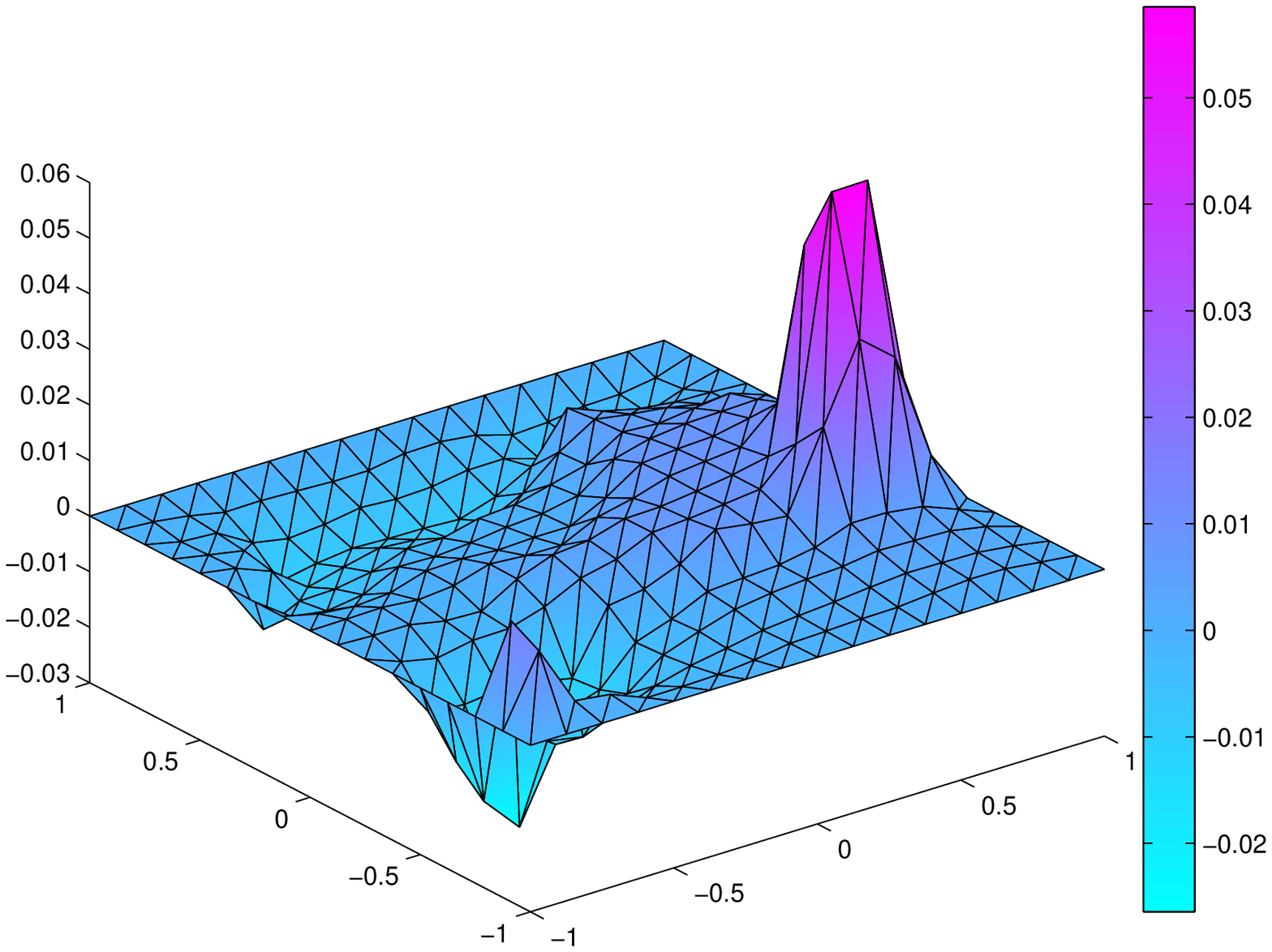}}
    \goodgap
    \subfigure[$u_h$]
    {\includegraphics[width=0.35\textwidth,height= 0.3\textwidth]{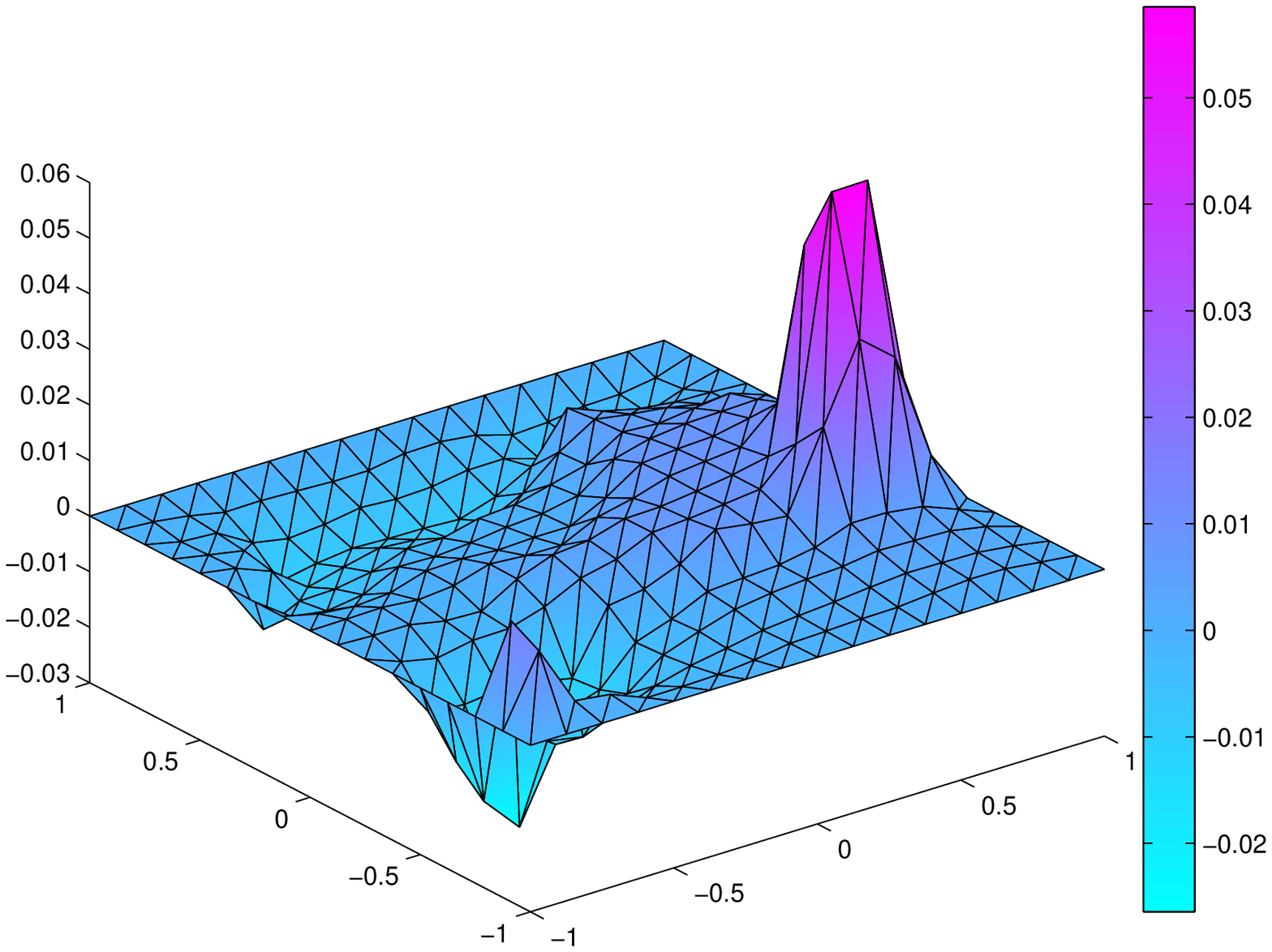}}
    \caption{Wave equation. Channel case, coarse mesh solution, $h=0.125$, $h_0=3h$, $T=h$. The $L^2$, $H^1$ and $L^\infty$ relative numerical errors are $0.0439$, $0.2684$ and $0.0389$.}
    \label{fig:wave3}
  \end{center}
\end{figure}

\subsection{Elliptic equation.}
We compute the solutions of \eqref{scalarproblem0} up to time $1$ on the fine mesh and in  the finite-dimensional approximation space $V_h$ defined in \eqref{Vh}.
The physical domain is the square $[-1,1]^2$. Global equations are
solved on a fine triangulation with $66049$ nodes and $131072$
triangles.

The elements $(\varphi_i)$ of Sub-section \ref{jhsgdjhdghgdhed} are weighted extended B-splines (WEB) \cite{Splines1,Splines2} (obtained by tensorizing one-dimensional elements and using weight function $(1-x^2)(1-y^2)$ to enforce the Dirichlet boundary condition).
The order of accuracy is not affected by
the choice of weight function given that the boundary is piecewise smooth.  Our motivation for using WEB elements lies in the fact that, with those elements, (Dirichlet) boundary conditions become simple to enforce. This being said, any finite elements satisfying the properties \eqref{Xprop1}, \eqref{Xprop2}, \eqref{Controlphi} and  \eqref{Controlphi2sum} would be adequate \cite{Brae07}.

We write $h$ the size of the coarse mesh.
Elements $\psi_i$ are obtained by solving \eqref{loclakjlkajlei23} on localized sub-domains of size $h_0$.
Table \ref{tab:example1-1} shows errors with
$\alpha=1/2$ for $a$ given by \eqref{ex1} (Example 1 of Section 3 of \cite{MR2292954}, trigonometric multi-scale, see also \cite{MiYu06}), i.e., for
\begin{equation}\label{ex1}
\begin{split}
a(x):=&\frac{1}{6}(\frac{1.1+\sin(2\pi x/\epsilon_{1})}{1.1+\sin(2\pi
y/\epsilon_{1})}+\frac{1.1+\sin(2\pi y/\epsilon_{2})}{1.1+\cos(2\pi
x/\epsilon_{2})}+\frac{1.1+\cos(2\pi x/\epsilon_{3})}{1.1+\sin(2\pi
y/\epsilon_{3})}+\\
&\frac{1.1+\sin(2\pi y/\epsilon_{4})}{1.1+\cos(2\pi
x/\epsilon_{4})}+\frac{1.1+\cos(2\pi x/\epsilon_{5})}{1.1+\sin(2\pi
y/\epsilon_{5})}+\sin(4x^{2}y^{2})+1),
\end{split}
\end{equation}
where $\epsilon_{1}=\frac{1}{5}$,$\epsilon_{2}=\frac{1}{13}$,$\epsilon_{3}=
\frac{1}{17}$,$\epsilon_{4}=\frac{1}{31}$,$\epsilon_{5}=\frac{1}{65}$.

Figure \ref{fig:example4-2} shows the logarithm (in base $2$) of the error with respect to $\log_2(h_0/h)$ (for $h=0.125$) and the value of $T$ used in \eqref{loclakjlkajlei23} for $a$ given by
Example 5 of Section 3 of \cite{MR2292954} (percolation at criticality, the conductivity of each site is equal to $\gamma$ or $1/\gamma$
with probability $1/2$ and $\gamma=4$).

Figure \ref{fig:example3-2} shows the logarithm (in base $2$) of the error with respect to $\log_2(h_0/h)$ (for $h=0.125$) and the value of $T$ used in \eqref{loclakjlkajlei23} for $a$ given by
Example 3 of Section 3 of \cite{MR2292954}, i.e.,  $a(x)=e^{h(x)}$, with
$
h(x)=\sum_{|k|\leq R}(a_{k}\sin(2\pi k\cdot x)+b_{k}\cos(2\pi k\cdot x))
$,
where $a_{k}$ and $b_{k}$ are independent uniformly distributed random
variables on $[-0.3,0.3]$ and $R=6$.

\begin{Remark}
Two factors contribute to the error plots shown in  figures \ref{fig:example4-2} and \ref{fig:example3-2}: a localization error which becomes dominant when $h_0/h$ is small (i.e. the fact \eqref{lakjlkajlei23} is not solved over the whole domain $\Omega$) and the distortion of the transfer property resulting from the $1/T$ term in \eqref{loclakjlkajlei23TT}. As expected both figures show that when $h_0/h$ is large, the error due to the distortion of the transfer property is dominant and is minimized by a large $T$. However, when $h_0/h$ is small, the localization error is dominant and  is minimized by a small $T$. The fact that in Figure \ref{fig:example3-2} this error is minimized by the second smallest $T$ instead of the smallest $T$ is explained by the fact that the localization error remains  bounded when $h_0/h$ is of the order of one whereas the error due to the distortion of the transfer property blows up as $T\downarrow 0$.
The fact that in both figures, curves associated with different $T$ interest each other, is indicative of the fact that for intermediate values of $h_0/h$, the error can be minimized via a fine-tuning of $T$ as explained in section \ref{jshgjhgdshjghdjhe}. The differences in the locations of these intersections can be explained by a larger localization error associated with
 the example of Figure \ref{fig:example3-2} (due to longer correlation ranges). In particular,
 the comparison between figures \ref{fig:example4-2} and \ref{fig:example3-2} indicates larger errors for Figure \ref{fig:example3-2}.
\end{Remark}

\subsection{High contrast, with and without buffer.}
In this example, $a$ is characterized by a fine and long-ranged high
conductivity channel (Figure \ref{exa:channel}). We choose $a(x)=100$, if $x$ is in the channel,
and $a(x)$ is the percolation medium, if $x$ is not in the channel (the conductivity of each site, not in channel, is equal to $\gamma$ or $1/\gamma$
with probability $1/2$ and $\gamma=4$). Figure \ref{fig:example2-3} shows the $log_2$ of the numerical error (in $L^2$ and $H^1$ norm)
versus $log_2(h)$.
The three cases for the localization are $h_0=\mathcal{O}(\sqrt{h}\ln \frac{1}{h})$ with a buffer $b_i$ around the high conductivity
channel (see Sub-section \ref{highcontrast}) of size $\mathcal{O}(\sqrt{h}\ln \frac{1}{h})$, $h_0=3h$ with no buffer around the high conductivity channel and $h_0=3h$ with a buffer $b_i$ around the high conductivity channel of size $3h$.
The first case shows that the method of Sub-section \ref{highcontrast} is converging as expected. The second case shows that, as expected, taking $\alpha=1$, does not guarantee convergence. The third case shows that adding a buffer around the high conductivity channel  improves  numerical errors but is not sufficient to guarantee convergence (as expected,  we also need $\alpha<1$). The percolating background medium has been re-sampled for each case; the effect of this re-sampling can be seen for the largest value of $h$ (i.e., $\log_2(h)=-1$).

\subsection{Wave equation.}
We compute the solutions of \eqref{huperboliccalarproblem0} up to time $1$ on the fine mesh and in  the finite-dimensional approximation space $V_h$ defined in \eqref{Vh}.
 The initial condition is
$u(x,0)=0$ and $u_t(x,0)=0$. The boundary condition is $u(x,t)=0$, for
$x\in\partial\Omega$. The density is uniformly equal to one and we choose $g=\sin(\pi x)\sin(\pi y)$.
 Figure \ref{fig:wave2} shows the fine mesh solutions $u$ and $u_h$ at time one,
for $a$ given by the trigonometric example \eqref{ex1}, with  $h=0.125$, $h_0=3h$ and $T=h$. Figure \ref{fig:wave2} shows the fine mesh solutions $u$ and $u_h$ at time one,
for $a$ given by the high conductivity channel example (Figure \ref{exa:channel}), with $h=0.125$, $h_0=3h$ and $T=h$.

We refer to \cite{OwZh11movies} for a list of movies on  the numerical homogenization of the wave equation with and without high contrast and with and without buffers (extended buffers in the high contrast case).

\appendix
\section{Proof of Proposition \ref{jhgsjhgdg6354r5df}.}\label{jhgsjhgd33ge3e}

The proof of Proposition \ref{jhgsjhgdg6354r5df} is a generalization of the proof of the control of the resonance error in periodic medium given in \cite{Gloria10}.

First we need the following lemma, which is the cornerstone of Cacciopoli's inequality.
\begin{Lemma}\label{LemCaccio}
Let $D$ be a sub-domain of $\Omega$ with piecewise Lipschitz boundary, and let $v$ solve
\begin{equation}\label{scaderblem0Cac}
\begin{cases}
    \frac{v}{T}-\diiv \Big(a(x)  \nabla v(x)\Big)=f(x) \quad  x \in D; f \in H^{-1}(D),\\
    v=0 \quad \text{on}\quad \partial D,
    \end{cases}
\end{equation}
Let $\zeta \,:\, D \rightarrow \R^+$ be a function of class $C^1$ such that $\zeta$ is identically null on an open neighborhood of the support
of $f$. Then,
\begin{equation}\label{scsddeac}
\int_{D} \big|\nabla (\zeta v)\big|^2\leq C \int_{D} v^2 |\nabla \zeta|^2,
\end{equation}
where $C$ only depends on the essential supremum and infimum of the maximum and minimum eigenvalues of $a$ over $D$.
\end{Lemma}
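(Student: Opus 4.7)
The plan is to run the standard Caccioppoli argument, using $\zeta^2 v$ as a test function in the weak formulation of \eqref{scaderblem0Cac}. Since $v \in H^1_0(D)$ and $\zeta \in C^1(D)$, the function $\zeta^2 v$ lies in $H^1_0(D)$ and is admissible. Crucially, because $\zeta$ is identically zero on an open neighborhood of $\operatorname{supp}(f)$, so is $\zeta^2 v$, and therefore the duality pairing $\langle f, \zeta^2 v\rangle_{H^{-1},H^1_0}$ vanishes. The weak formulation thus reduces to
\begin{equation*}
\frac{1}{T}\int_D \zeta^2 v^2\,dx + \int_D (\nabla(\zeta^2 v))^T a\,\nabla v\,dx = 0.
\end{equation*}

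Next I would expand $\nabla(\zeta^2 v) = \zeta^2 \nabla v + 2\zeta v \nabla \zeta$ and isolate the good term:
\begin{equation*}
\frac{1}{T}\int_D \zeta^2 v^2\,dx + \int_D \zeta^2 (\nabla v)^T a \nabla v\,dx = -2\int_D \zeta v\,(\nabla \zeta)^T a \nabla v\,dx.
\end{equation*}
The left-hand side is bounded below by $\lambda_{\min}(a)\int_D \zeta^2 |\nabla v|^2\,dx$ (the $1/T$ term is non-negative and can simply be dropped, so the presence of the zeroth-order term causes no additional difficulty). The right-hand side is handled by Cauchy--Schwarz together with Young's inequality $2|xy| \le \varepsilon x^2 + \varepsilon^{-1} y^2$ applied to $x = \zeta |\nabla v|$ and $y = v |\nabla \zeta|$:
\begin{equation*}
\Big|2\int_D \zeta v\,(\nabla \zeta)^T a \nabla v\,dx\Big| \le \varepsilon \lambda_{\max}(a)\int_D \zeta^2 |\nabla v|^2\,dx + \frac{\lambda_{\max}(a)}{\varepsilon}\int_D v^2 |\nabla \zeta|^2\,dx.
\end{equation*}
Choosing $\varepsilon = \lambda_{\min}(a)/(2\lambda_{\max}(a))$ allows the gradient term to be absorbed into the left-hand side, yielding
\begin{equation*}
\int_D \zeta^2 |\nabla v|^2\,dx \le C \int_D v^2 |\nabla \zeta|^2\,dx,
\end{equation*}
with $C$ depending only on the ellipticity bounds on $a$ over $D$.

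Finally I would convert this bound on $\zeta |\nabla v|$ into the desired bound on $\nabla(\zeta v)$ by the trivial identity $\nabla(\zeta v) = \zeta \nabla v + v \nabla \zeta$ and the inequality $|\alpha + \beta|^2 \le 2|\alpha|^2 + 2|\beta|^2$, giving $\int_D |\nabla(\zeta v)|^2 \le 2(C+1)\int_D v^2 |\nabla \zeta|^2$. The only subtlety worth double-checking is the justification that $\zeta^2 v$ is a legitimate test function (i.e.\ lies in $H^1_0(D)$) — this uses $\zeta \in C^1$ together with the chain rule for Sobolev functions — and that the vanishing of $\zeta$ on a neighborhood of $\operatorname{supp}(f)$ is strong enough to annihilate the $H^{-1}$ pairing even when $f$ is merely a distribution, which it is by the open-neighborhood assumption. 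No step presents a serious obstacle; this is essentially the textbook Caccioppoli inequality adapted to the zeroth-order perturbation $v/T$, which only strengthens the left-hand side.
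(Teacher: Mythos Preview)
Your proof is correct and follows essentially the same approach as the paper: test the equation against $\zeta^2 v$, use that $\zeta$ vanishes near $\operatorname{supp}(f)$ to kill the right-hand side, and then extract the bound on $\nabla(\zeta v)$. The only difference is a minor algebraic shortcut: rather than splitting off the cross term and absorbing it via Young's inequality, the paper uses the exact identity $\nabla(\zeta^2 v)\cdot a\nabla v = \nabla(\zeta v)\cdot a\nabla(\zeta v) - v^2\,\nabla\zeta\cdot a\nabla\zeta$, which immediately yields $\int_D \nabla(\zeta v)\cdot a\nabla(\zeta v) \le \int_D v^2\,\nabla\zeta\cdot a\nabla\zeta$ and hence the constant $C=\lambda_{\max}(a)/\lambda_{\min}(a)$ without any absorption step.
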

\begin{proof}
Multiplying \eqref{scaderblem0Cac} by $\zeta^2 v$ and integrating by parts, we obtain that
\begin{equation}
\int_{D}\zeta \frac{v^2}{T}+\int_D \nabla (\zeta^2 v) a \nabla v=0.
\end{equation}
Hence,
\begin{equation}
\int_{D}\zeta \frac{v^2}{T}+\int_D \nabla (\zeta v) a \nabla (\zeta v)= \int_D v^2 \nabla \zeta a \nabla \zeta,
\end{equation}
which concludes the proof.
\end{proof}

\begin{Lemma}\label{controlG}
Let $D$ be a sub-domain of $\Omega$ with piecewise Lipschitz boundary. Write $G_{T,D}$ the Green's function of the operator $\frac{1}{T}-\diiv(a\nabla)$ with Dirichlet boundary condition on $\partial D$. Then,
\begin{equation}
G_{T,D}(x,y)\leq \frac{C}{|x-y|^{d-2}} \exp\big(-\frac{|x-y|}{C\sqrt{T}}\big),
\end{equation}
where $C$ only depends on $d$ and the essential supremum and infimum of the maximum and minimum eigenvalues of $a$ over $D$.
\end{Lemma}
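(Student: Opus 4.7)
The plan is to combine the classical pointwise Green's function bound for uniformly elliptic divergence-form operators with an iterated Caccioppoli argument that exploits the zero-order term $1/T$ to produce the exponential decay in the exponent $|x-y|/\sqrt T$.

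First, I would establish the ``polynomial part'' of the bound. Since $1/T \geq 0$, the maximum principle gives $0 \leq G_{T,D} \leq G_{\infty,D}$, where $G_{\infty,D}$ is the Green's function of the purely elliptic operator $-\diiv(a\nabla)$ with Dirichlet condition on $\partial D$. The classical Littman--Stampacchia--Weinberger/Grüter--Widman estimate then gives $G_{T,D}(x,y)\leq C|x-y|^{2-d}$ for $d\geq 3$, with $C$ depending only on $d$ and the ellipticity bounds of $a$. (For $d=2$ a logarithmic analogue holds and is absorbed into the constant on short scales; only the large-$|x-y|$ regime requires genuine argument.)

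Next, I would fix $y\in D$ and set $v(z):=G_{T,D}(z,y)$, so that $v$ weakly solves $v/T-\diiv(a\nabla v)=0$ on $D\setminus\{y\}$. I would then iterate Lemma \ref{LemCaccio} on concentric shells around $y$. Pick a width parameter $\rho=K\sqrt T$ for some $K$ to be chosen later, and consider annuli $A_k=B(y,\,r+k\rho)\setminus B(y,\,r+(k-1)\rho)$ with $r=|x-y|/2$, moving outward for $k=1,2,\dots$. For each $k$ take a piecewise affine cutoff $\zeta_k$ equal to $1$ on $B(y,r+(k-1)\rho)$ and vanishing on the complement of $B(y,r+k\rho)$, so $|\nabla\zeta_k|^2\leq 1/\rho^2=1/(K^2 T)$. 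Retaining the term $\int \zeta_k^2 v^2/T$ on the left of the Caccioppoli identity (rather than discarding it, as in the usual Caccioppoli statement) gives
\begin{equation*}
\int \zeta_k^2\,\tfrac{v^2}{T}+\int a\nabla(\zeta_k v)\cdot\nabla(\zeta_k v)\;=\;\int v^2\,a\nabla\zeta_k\cdot\nabla\zeta_k \leq \frac{C}{K^2 T}\int_{A_k} v^2.
\end{equation*}
Because the left-hand side dominates $T^{-1}\int_{B(y,r+(k-1)\rho)}v^2$, choosing $K$ large enough (depending only on $\lambda_{\min},\lambda_{\max}$) gives a \emph{contraction} $\int_{B(y,r+(k-1)\rho)}v^2\leq \gamma\int_{B(y,r+k\rho)}v^2$ with $\gamma<1$. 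Iterating $N$ times from the interior up to a shell of thickness of order $R=|x-y|$ yields $\int_{B(y,r)}v^2\leq \gamma^N\int_{B(y,r+N\rho)}v^2\leq e^{-cR/\sqrt T}\int v^2$. A symmetric argument based around $x$ rather than $y$ localizes this exponential gain to a small ball around $x$.

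Finally, I would convert the $L^2$ bound to a pointwise bound. On a ball $B(x,R/8)$ disjoint from $y$, $v$ is a nonnegative weak subsolution of a uniformly elliptic divergence-form equation (the zeroth-order term only helps), so Moser's $L^\infty$-to-$L^2$ estimate gives $v(x)^2\leq C R^{-d}\int_{B(x,R/8)}v^2$. Combining with Step 1 to bound the $L^2$ integral over $B(y, R/2 + N\rho)\subset D$ crudely by the supremum of $v$ there (which is $\leq C R^{2-d}$), one obtains
\begin{equation*}
G_{T,D}(x,y)^2 \leq C\,R^{-2(d-2)}\,e^{-cR/\sqrt T},
\end{equation*}
which yields the claim after taking square roots and absorbing constants into $C$.

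The main obstacle is the iteration in Step 2: a naive application of Lemma \ref{LemCaccio} only produces a bound with an absolute constant on the right, which does not iterate. The critical trick is to keep the term $\int\zeta^2 v^2/T$ on the left, which makes the estimate dimensionally sensitive to the ratio $|\nabla\zeta|^2 T$; enlarging the shell width to a constant multiple of $\sqrt T$ is precisely what makes this ratio small and produces the geometric contraction factor $\gamma<1$ that drives the exponential decay, with rate $1/\sqrt T$ dictated by the natural length scale of the zeroth-order term.
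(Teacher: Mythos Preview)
The paper's own proof is a two-liner: extend $a$ to $\R^d$, use the maximum principle to obtain $G_{T,D}\leq G_{T,\R^d}$, and then invoke Lemma~2 of \cite{Gloria10} for the whole-space bound. Your approach is in effect a self-contained proof of that cited lemma, combining the Gr\"uter--Widman pointwise bound with an iterated Caccioppoli/Moser argument; this is a legitimate and more informative route than the paper's.

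There is, however, a genuine directional error in your Step~2. Your cutoffs $\zeta_k$ equal $1$ on $B(y,r+(k-1)\rho)$, a ball containing the source point $y$. Consequently $\zeta_k^2 v$ is not an admissible test function: it inherits the singularity of $v=G_{T,D}(\cdot,y)$ at $y$, and Lemma~\ref{LemCaccio} explicitly requires $\zeta$ to vanish on a neighborhood of the support of the source. Even if one ignores this, the ``contraction'' you derive, $\int_{B_{k-1}}v^2\leq\gamma\int_{B_k}v^2$ with $\gamma<1$, asserts that the $L^2$-mass of the Green's function concentrates \emph{away} from its pole---which is manifestly false. The correct iteration reverses the roles: take $\zeta_k=0$ on $B(y,r_{k-1})$ and $\zeta_k=1$ outside $B(y,r_k)$ (or, equivalently, center the cutoffs at $x$, as your ``symmetric argument'' remark hints). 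One then obtains the physically correct contraction
\[
\int_{D\setminus B(y,r_k)}v^2\;\leq\;\gamma\int_{D\setminus B(y,r_{k-1})}v^2,
\]
i.e., exponential decay of the mass \emph{far} from the source. With this orientation fixed, the remainder of your outline (Moser's sup-estimate on a ball near $x$, Step~1 to control the starting mass, and absorbing stray powers of $\sqrt{T}$ into the exponential via $s^p e^{-cs}\leq C_p e^{-cs/2}$) goes through and yields the stated bound.
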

\begin{proof}
Extending $a$ to $\R^d$ and using the maximum principle, we obtain that
\begin{equation}
G_{T,D}(x,y)\leq G_{T,\R^d}(x,y),
\end{equation}
we conclude by using the exponential decay of the Green's function in $\R^d$ (we refer to Lemma 2 of \cite{Gloria10}).
\end{proof}

\begin{Lemma}\label{sumLemma}
Let $\psi_{i,T}$ be the solution of \eqref{loclakjlkajlei23TT} and $\psi_{i,T,\Omega_i}$ the solution of \eqref{loclakjlddrfredkajlei23TT}.
Let $\Omega_i'$ be a sub-domain of $\Omega_i$ such that $S_i \subset \Omega_i'$ and $\dist(S_i, \Omega_i/\Omega_i')>0$. We have
\begin{equation}\label{contpsiext1}
\big\|\psi_{i,T}\big\|_{H^1(\Omega/\Omega_i')} \leq \frac{C h^{\frac{d}{2}-1}}{\big(\dist(S_i,\Omega/\Omega_i')\big)^{d}} \exp\Big(-\frac{\dist(S_i,\Omega/\Omega_i')}{C \sqrt{T}}\Big),
\end{equation}
and
\begin{equation}\label{contpsiext2}
\big\|\psi_{i,T,\Omega_i}\big\|_{H^1(\Omega_i/\Omega_i')} \leq \frac{C h^{\frac{d}{2}-1}}{\big(\dist(S_i,\Omega/\Omega_i')\big)^{d}} \exp\Big(-\frac{\dist(S_i,\Omega/\Omega_i')}{C \sqrt{T}}\Big).
\end{equation}
\end{Lemma}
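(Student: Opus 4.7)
The plan is to prove the exponential decay of $\psi_{i,T}$ away from the support $S_i$ of $\varphi_i$ via an iterated Cacciopoli-type energy estimate that exploits the zeroth-order term $\frac{1}{T}\psi_{i,T}$ in \eqref{loclakjlkajlei23TT}. The role of this term is essential: it is what converts the merely algebraic decay of the Green's function of $-\diiv(a\nabla)$ into exponential decay at rate $\exp(-\dist/(C\sqrt{T}))$, and the proof parallels the treatment of the resonance error in \cite{Gloria10} that Lemma \ref{LemCaccio} is set up for.

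First I would observe that since $\supp \varphi_i\subset S_i$, testing \eqref{loclakjlkajlei23TT} against any $w\in H^1_0(\Omega)$ with $\supp w\cap S_i=\emptyset$ kills the right-hand side, so $\psi_{i,T}$ solves the homogeneous equation $\frac{1}{T}\psi_{i,T}-\diiv(a\nabla\psi_{i,T})=0$ weakly on $\Omega\setminus S_i$. I then introduce concentric shells $A_k:=\{x\in\Omega\,:\,k R\leq \dist(x,S_i)\leq(k+1)R\}$ with $R$ a parameter to be chosen proportional to $\sqrt{T}$, and for each $k\geq 1$ a smooth cutoff $\zeta_k$ equal to $1$ on $\bigcup_{j\geq k+1}A_j$, equal to $0$ off $\bigcup_{j\geq k}A_j$, with $|\nabla\zeta_k|\leq C/R$. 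Testing \eqref{loclakjlkajlei23TT} against $\zeta_k^2\psi_{i,T}$ and applying Cauchy--Schwarz with absorption (as in the proof of Lemma \ref{LemCaccio}, but retaining the $\frac{1}{T}$ term) yields
\begin{equation*}
\frac{1}{T}\|\zeta_k\psi_{i,T}\|_{L^2(\Omega)}^2+\lambda_{\min}(a)\|\zeta_k\nabla\psi_{i,T}\|_{L^2(\Omega)}^2\leq \frac{C}{R^2}\|\psi_{i,T}\|_{L^2(A_k)}^2.
\end{equation*}
Setting $E_k:=\|\psi_{i,T}\|_{L^2(\bigcup_{j\geq k}A_j)}^2$ and choosing $R=C_*\sqrt{T}$ with $C_*$ large enough, the left-hand side gives $\frac{1}{T}E_{k+1}\leq\frac{C}{R^2}(E_k-E_{k+1})$, hence $E_{k+1}\leq\frac{1}{2}E_k$. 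Iterating over $K\sim \dist(S_i,\Omega/\Omega_i')/R$ shells produces the exponential decay $E_K\leq 2^{-K}E_0$.

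For the base case I use the plain energy estimate: testing \eqref{loclakjlkajlei23TT} against $\psi_{i,T}$ gives $\lambda_{\min}(a)\|\nabla\psi_{i,T}\|_{L^2(\Omega)}^2\leq \|\nabla\psi_{i,T}\|_{L^2(\Omega)}\|\nabla\varphi_i\|_{L^2(\Omega)}$, so by \eqref{Controlphi} and Poincar\'e, $\|\psi_{i,T}\|_{L^2(\Omega)}\leq C h^{\frac{d}{2}-1}$. Combined with the iterated decay this yields
\begin{equation*}
\|\psi_{i,T}\|_{L^2(\Omega/\Omega_i')}\leq C h^{\frac{d}{2}-1}\exp\!\Big(-\frac{\dist(S_i,\Omega/\Omega_i')}{C\sqrt{T}}\Big).
\end{equation*}
A final one-shot Cacciopoli with a cutoff equal to $1$ on $\Omega/\Omega_i'$ and $0$ on a thin inner collar of width $\tfrac{1}{2}\dist(S_i,\Omega/\Omega_i')$, combined with Lemma \ref{LemCaccio}, converts this $L^2$ bound into the $H^1$ bound of \eqref{contpsiext1}, absorbing the algebraic factor $1/\dist^{d}$ stated in the lemma (our actual derivation gives only $1/\dist$, so \eqref{contpsiext1} is a slack but adequate upper bound). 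The bound \eqref{contpsiext2} for $\psi_{i,T,\Omega_i}$ is obtained by the exact same argument applied to \eqref{loclakjlddrfredkajlei23TT}, since the energy estimate and the homogeneous-equation-away-from-$S_i$ property are identical on $\Omega_i$.

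The main technical obstacle is the shell iteration: one must tune $R\sim\sqrt{T}$ so that the Cacciopoli constant $C/R^2$ and the mass term $1/T$ balance in a way that yields a geometric decay factor strictly less than $1$, and simultaneously ensures that the total number of shells $K$ fitting into $[0,\dist(S_i,\Omega/\Omega_i')]$ produces the claimed exponential rate. The constants in \eqref{contpsiext1}--\eqref{contpsiext2} depend only on $d$, $\Omega$, $\lambda_{\min}(a)$ and $\lambda_{\max}(a)$, as required in the high-contrast variant (Theorem \ref{thnsslochigh}) where the same argument is later applied on buffer sub-domains $(\Omega_i')^r\setminus\Omega_i'$ where $a$ remains bounded.
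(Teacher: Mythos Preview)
Your proof is correct and takes a genuinely different route from the paper's. The paper argues via the Green's function: it writes $\psi_{i,T}(x)=-\int_{S_i}\nabla_y G_{T,\Omega}(x,y)\cdot\nabla\varphi_i(y)\,dy$, applies one Cacciopoli step to pass from $\nabla\psi_{i,T}$ to $\psi_{i,T}$ on $\Omega\setminus\Omega_i'$, another to pass from $\nabla_y G_{T,\Omega}$ to $G_{T,\Omega}$, and then invokes the pointwise bound $G_{T,\Omega}(x,y)\leq C|x-y|^{2-d}\exp(-|x-y|/(C\sqrt{T}))$ of Lemma~\ref{controlG}. The factor $1/\dist^{d}$ in \eqref{contpsiext1} arises from integrating this pointwise Green's function estimate over the relevant sets.

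Your approach avoids the Green's function altogether: you exploit the zeroth-order term $\psi_{i,T}/T$ directly in an iterated Cacciopoli estimate on concentric shells of thickness $R\sim\sqrt{T}$, obtaining geometric $L^2$-decay $E_{k+1}\leq\tfrac{1}{2}E_k$ and hence the exponential rate; a final Cacciopoli step upgrades $L^2$ to $H^1$. This is a purely energy-based Agmon-type argument and is in some ways more elementary, since Lemma~\ref{controlG} ultimately rests on pointwise heat-kernel or De Giorgi--Nash--Moser type input (via \cite{Gloria10}). Your method also yields a sharper prefactor ($1/\dist$ rather than $1/\dist^{d}$), which, as you note, is absorbed since $\dist\leq\diam(\Omega)$. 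The trade-off is that the paper's Green's-function route makes the dependence on the medium more transparent and feeds directly into the high-contrast variant of Subsection~\ref{highcontrast}, where one needs the decay to be governed by the coefficients on the buffer region only; your shell argument would need the cutoffs $\zeta_k$ to be supported in the bounded-contrast buffer to achieve the same.
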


\begin{proof}
For $A\subset \Omega$, write $A^r$ the set of points of $\Omega$ that are at distance at most $r$ from $A$.
Let us now use Cacciopoli's inequality to bound $\int_{\Omega/\Omega_i'} |\nabla \psi_{i,T}|^2$.  Using Lemma \ref{LemCaccio} with  $\zeta$ identically equal to one on $\Omega/\Omega_i'$, zero on $(\Omega/\Omega_i')^r$ with $r:=\dist(S_i,\Omega/\Omega_i')/3$ and $|\nabla \zeta|\leq C /r$,
we obtain that
\begin{equation}\label{ksjdkddshdjhed}
\int_{\Omega/\Omega_i'} |\nabla \psi_{i,T}|^2 \leq \frac{C}{r^2} \int_{(\Omega/\Omega_i')^r} \psi_{i,T}^2.
\end{equation}
Next, observe that for $x\in (\Omega/\Omega_i')^r$,
\begin{equation}\label{ksjdkseddrsdrhed}
\psi_{i,T}(x)=-\int_{S_i} \nabla G_{T,\Omega}(x,y) \nabla \varphi_i (y)\,dy.
\end{equation}
Hence,
\begin{equation}\label{ksjdksddsxxrsdrhed}
\big|\psi_{i,T}(x)\big| \leq \|\nabla \varphi_i\|_{(L^2(S_i))^d} \big\|\nabla G_{T,\Omega}(x,.) \big\|_{(L^2(S_i))^d}.
\end{equation}
Another use of Cacciopoli's inequality leads to
\begin{equation}\label{kjshdjdwedrhkdhdje}
\big\|\nabla G_{T,\Omega}(x,.) \big\|_{(L^2(S_i))^d} \leq \frac{C}{r} \big\| G_{T,\Omega}(x,.) \big\|_{L^2(S_i^r)}.
\end{equation}
Combining \eqref{ksjdkddshdjhed} with \eqref{ksjdksddsxxrsdrhed} with \eqref{kjshdjdwedrhkdhdje}, we obtain that
\begin{equation}\label{ksjdkddsesddshdjhed}
\int_{\Omega/\Omega_i'} |\nabla \psi_{i,T}|^2 \leq  \|\nabla \varphi_i\|_{(L^2(S_i))^d}^2 \frac{C}{r^4} \int_{(\Omega/\Omega_i')^r} \big\| G_{T,\Omega}(x,.) \big\|_{L^2(S_i^r)}^2.
\end{equation}
We conclude the proof of \eqref{contpsiext1} using Lemma \ref{controlG} and \eqref{Controlphi}. The proof of \eqref{contpsiext2} is similar observing that $\dist(S_i,\Omega/\Omega_i')\leq \dist(S_i,\Omega_i/\Omega_i')$
\end{proof}

\begin{Lemma}\label{diffLem}
Let $D$ be a sub-domain of $\Omega$ with piecewise Lipschitz boundary. Let $\psi\in H^1(\Omega)$, and let $v$ solve
\begin{equation}\label{scadeqrddbdleqm0Cac}
\begin{cases}
    \frac{v}{T}-\diiv \Big(a(x)  \nabla v(x)\Big)=0 \quad  x \in D,\\
    v=\psi \quad \text{on}\quad \partial D,
    \end{cases}
\end{equation}
Write $S$ the intersection of the support of $\psi$ with $D$. Let $D_1$ be a sub-domain of $D$ such that $\dist(D_1,S)>0$, then
\begin{equation}
\int_{D_1} |\nabla v|^2 \leq  \frac{C}{\big(\dist(D_1,S)\big)^{2d}} (T^{-1}+1)^2 \|\psi\|_{H^1(\Omega)}^2 \exp\Big(-\frac{\dist(D_1,S)}{C \sqrt{T}}\Big),
\end{equation}
where $C$ does not depend on $D,D_1,S$.
\end{Lemma}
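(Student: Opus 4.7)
The plan is to combine the Caccioppoli-type estimate of Lemma \ref{LemCaccio} with a Green's function representation of $v$, using the exponential decay from Lemma \ref{controlG}. The starting observation is that on $D \setminus S$ the source in \eref{scadeqrddbdleqm0Cac} vanishes, so $v$ is "caloric'' there; hence on any region well separated from $S$ both $v$ and its gradient should be small.

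First, set $r := \tfrac{1}{3}\dist(D_1,S)$ and let $D_1^r$ denote the $r$-neighborhood of $D_1$ in $D$; by construction $D_1^r$ is still at distance at least $2r$ from $S$. Writing $\tilde v := v-\tilde\psi$, where $\tilde\psi$ is a smooth cutoff extension of $\psi$ supported near $S$, reduces the problem to one with zero boundary condition and right-hand side supported in $S$, to which Lemma \ref{LemCaccio} applies. Choosing a cutoff $\zeta\in C^1(D)$ equal to $1$ on $D_1$, vanishing outside $D_1^r$, with $|\nabla\zeta|\le C/r$, yields the Caccioppoli-type bound
\begin{equation}\label{plan:Cac}
\int_{D_1}|\nabla v|^2 \le \frac{C}{r^2}\int_{D_1^r} v^2.
\end{equation}
(On $D_1^r$ the function $\tilde\psi$ vanishes, so $\nabla v=\nabla\tilde v$ there.)

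Next I would estimate $v(x)$ pointwise for $x\in D_1^r$ by Green's function representation. Writing $G=G_{T,D}$ for the Green's function of $\tfrac{1}{T}-\diiv(a\nabla\cdot)$ on $D$ with Dirichlet boundary conditions, and integrating by parts (the boundary contributions on $\partial D$ vanish because $G(x,\cdot)=0$ there), one obtains
\begin{equation}\label{plan:rep}
v(x) = -\frac{1}{T}\int_{S} G(x,y)\,\psi(y)\,dy - \int_{S}\nabla_y G(x,y)\cdot a(y)\nabla\psi(y)\,dy.
\end{equation}
Applying the Cauchy--Schwarz inequality in $y$, then Lemma \ref{controlG} together with Caccioppoli's inequality on $G(x,\cdot)$ (to pass from $\nabla_y G$ to $G$ on a slightly enlarged set $S^r$), and using $|x-y|\ge 2r$ for $y\in S$, $|x-y|\ge r$ for $y\in S^r$, gives
\begin{equation*}
\|G(x,\cdot)\|_{L^2(S)}^2 + r^2\|\nabla_y G(x,\cdot)\|_{L^2(S)}^2 \le \frac{C}{r^{2(d-2)}}\exp\!\Big(-\frac{\dist(D_1,S)}{C\sqrt{T}}\Big),
\end{equation*}
where the constant $3$ coming from $\dist(D_1,S)=3r$ has been absorbed into $C$. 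Inserting this into \eref{plan:rep} yields
\begin{equation*}
|v(x)|^2 \le \frac{C\,(T^{-1}+1)^2}{r^{2(d-1)}} \,\|\psi\|_{H^1(\Omega)}^2\, \exp\!\Big(-\frac{\dist(D_1,S)}{C\sqrt{T}}\Big),
\end{equation*}
uniformly for $x\in D_1^r$.

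Finally, combining this with \eref{plan:Cac} and $|D_1^r|\le C$ gives the claimed bound, since $r^{-2}\cdot r^{-2(d-1)}=r^{-2d}=(\dist(D_1,S)/3)^{-2d}$, which absorbs into the constant. The main technical obstacle is the bookkeeping: one must choose the cutoff distance $r$ in Caccioppoli's inequality and the inner radius used on $G(x,\cdot)$ consistently so that (i) all points $x$ stay a fixed fraction of $\dist(D_1,S)$ away from $S$, preserving the exponential factor with rate $\dist(D_1,S)/(C\sqrt T)$, and (ii) the polynomial loss in $r$ remains of order $\dist(D_1,S)^{-2d}$. The overall argument parallels the treatment of the resonance error in \cite{Gloria10} and the proof of Lemma \ref{sumLemma}; the only genuinely new ingredient is handling the nonzero Dirichlet data $\psi$ via the reduction to $\tilde v$ before applying Caccioppoli.
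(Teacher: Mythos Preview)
Your proposal is correct and follows essentially the same route as the paper: subtract the boundary data to obtain a zero-Dirichlet problem with source supported in $S$, apply the Caccioppoli estimate (Lemma \ref{LemCaccio}) with a cutoff of scale $r=\tfrac{1}{3}\dist(D_1,S)$ to pass from $\int_{D_1}|\nabla v|^2$ to $\int_{D_1^r}v^2$, represent $v$ (equivalently $\tilde v$) via the Green's function, and control $\|\nabla_y G(x,\cdot)\|_{L^2(S)}$ by a second Caccioppoli step combined with Lemma \ref{controlG}. The only cosmetic difference is that the paper subtracts $\psi$ itself (writing $w=v-\psi$) rather than a cutoff $\tilde\psi$; since $S$ is by definition the support of $\psi$ in $D$, the extra cutoff is unnecessary, and your representation \eqref{plan:rep} is really the formula for $\tilde v(x)$, which equals $v(x)$ on $D_1^r$ as you note.
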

\begin{proof}
Write $w:=v-\psi$. Then,
\begin{equation}\label{scaqdzerddabdlem0Cac}
\begin{cases}
    \frac{w}{T}-\diiv \Big(a(x)  \nabla w(x)\Big)=-\frac{\psi}{T}+\diiv(a\nabla \psi) \quad  x \in D,\\
    v=0 \quad \text{on}\quad \partial D,
    \end{cases}
\end{equation}
Thus,
\begin{equation}\label{scadswezerwddbdlem0pCac}
w(x)=-\int_D \big(\frac{\psi(y)}{T}G_{T,D}(x,y)+ \nabla \psi(y) a(y) \nabla G_{T,D}(x,y)\big)\,dy.
\end{equation}
Using Cauchy-Schwartz inequality, we obtain that
\begin{equation}\label{scadswezewrwddbdlecm0Cac}
|w(x)|\leq C \|\psi\|_{H^1(\Omega)} \Big(\frac{1 }{T}\big\|G_{T,D}(x,.)\big\|_{L^2(S)}+ \big\|\nabla G_{T,D}(x,.)\big\|_{(L^2(S))^d}\Big).
\end{equation}
For $A\subset D$, write $A^r$ the set of points of $D$ that are at distance at most $r$ from $A$.
Let us now use Cacciopoli's inequality to bound $\int_{D_1} |\nabla w|^2$.  Using Lemma \ref{LemCaccio} with  $\zeta$ identically equal to one on $D_1$, zero on $D/ D_1^{r_1}$ and such that $|\nabla \zeta|\leq C/r_1$  we obtain that
\begin{equation}\label{ksjdkshdjhed}
\int_{D_1} |\nabla w|^2 \leq \frac{C}{r_1^2} \int_{D_1^{r_1}} w^2,
\end{equation}
provided that $\dist(D_1^{r_1} ,S)>0$. Hence, for $r_1:=\dist(D_1,S)/3$, we obtain \eqref{ksjdkshdjhed}.
Taking $r_2:=\dist(D_1,S)/3$ and using Cacciopoli's inequality again, we also obtain that
 \begin{equation}\label{kjsdhgjksghh}
\big\|\nabla G_{T,D}(x,.)\big\|_{(L^2(S))^d} \leq \frac{C}{r_2} \big\|G_{T,D}(x,.)\big\|_{L^2(S^{r_2})}.
\end{equation}
Combining \eqref{ksjdkshdjhed} with \eqref{scadswezewrwddbdlecm0Cac} and \eqref{kjsdhgjksghh} and observing that $w=v$ on $D_1^{r_1}$ we obtain that
\begin{equation}\label{ksjsdkeewdkshdjdhed}
\int_{D_1} |\nabla v|^2 \leq \frac{C}{r_1^2 r_2^2} \|\psi\|_{H^1(\Omega)}^2 (T^{-1}+1)^2 \int_{D_1^{r_1}} \big\|G_{T,D}(x,.)\big\|_{L^2(S^{r_2})}^2.
\end{equation}
Using Lemma \ref{controlG}, we deduce that
\begin{equation}\label{ksjsdkedhksderhhfdkshdjhed}
\int_{D_1} |\nabla w|^2 \leq \frac{C |\Omega|}{(\dist(D_1,S))^{2d}} \|\psi\|_{H^1(\Omega)}^2 (T^{-1}+1)^2 \exp\big(-\frac{\dist(D_1,S)}{C\sqrt{T}}\big).
\end{equation}
This concludes the proof of Lemma \ref{diffLem}.
\end{proof}

\begin{Lemma}\label{hsgsjhgsjg3e3ee}
Let $\psi_{i,T}$ be the solution of \eqref{loclakjlkajlei23TT} and $\psi_{i,T,\Omega_i}$ the solution of \eqref{loclakjlddrfredkajlei23TT}.
Let $\Omega_i'$ be a sub-domain of $\Omega_i$ such that  $\dist(\Omega/\Omega_i, \Omega_i')>0$. We have
\begin{equation}\label{contpssdsddeeiext1}
\big\|\psi_{i,T}-\psi_{i,T,\Omega_i}\big\|_{H^1(\Omega_i')} \leq \frac{C (T^{-1}+1) h^{\frac{d}{2}-1}}{\big(\dist(\Omega/\Omega_i,\Omega_i')\big)^{d+1}} \exp\Big(-\frac{\dist(\Omega/\Omega_i, \Omega_i')}{C \sqrt{T}}\Big).
\end{equation}
\end{Lemma}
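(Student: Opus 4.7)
The plan is to observe that the difference $w := \psi_{i,T} - \psi_{i,T,\Omega_i}$ (with $\psi_{i,T,\Omega_i}$ extended by zero outside $\Omega_i$) solves the homogeneous screened equation
\begin{equation*}
\frac{w}{T} - \diiv(a \nabla w) = 0 \quad \text{on } \Omega_i,
\end{equation*}
with Dirichlet data $w|_{\partial \Omega_i} = \psi_{i,T}|_{\partial \Omega_i}$. This is exactly the setting of Lemma \ref{diffLem} with $D := \Omega_i$ and $D_1 := \Omega_i'$; the only obstruction is that Lemma \ref{diffLem} measures the decay in terms of the distance between $D_1$ and the support $S$ of the boundary extension $\psi$, and $\psi_{i,T}$ itself is typically supported throughout $\Omega$.

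I would circumvent this by exploiting the fact that $v$ in Lemma \ref{diffLem} only depends on the trace of $\psi$ on $\partial D$. Let $r := \dist(\Omega/\Omega_i, \Omega_i')$ and choose a smooth cutoff $\chi : \Omega \to [0,1]$ with $\chi \equiv 1$ on $\Omega/\Omega_i$ (hence on $\partial \Omega_i$), $\chi \equiv 0$ on $\{x : \dist(x, \Omega/\Omega_i) \geq r/2\}$ (which contains $\Omega_i'$), and $|\nabla \chi| \leq C/r$; a concrete realization is $\chi(x) := \phi(2\dist(x, \Omega/\Omega_i)/r)$ for a suitable $C^\infty$ profile $\phi$. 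Applying Lemma \ref{diffLem} with $\psi := \chi \psi_{i,T}$, the support $S$ of $\psi$ inside $\Omega_i$ is confined to the thin layer $\{0 < \dist(\cdot, \Omega/\Omega_i) < r/2\}$, so the triangle inequality gives $\dist(\Omega_i', S) \geq r/2$, and the lemma yields
\begin{equation*}
\int_{\Omega_i'} |\nabla w|^2 \leq \frac{C (T^{-1}+1)^2}{r^{2d}} \|\chi \psi_{i,T}\|_{H^1(\Omega)}^2 \exp\Big(-\frac{r}{C\sqrt{T}}\Big).
\end{equation*}

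To control $\|\chi \psi_{i,T}\|_{H^1(\Omega)}$ I would expand via Leibniz; the terms $\chi \psi_{i,T}$ and $\chi \nabla \psi_{i,T}$ are estimated trivially, while $\psi_{i,T} \nabla \chi$ contributes an extra factor $r^{-1}$, so that $\|\chi \psi_{i,T}\|_{H^1(\Omega)}^2 \leq C(1 + r^{-2}) \|\psi_{i,T}\|_{H^1(\Omega)}^2$. The standard a priori energy estimate for \eqref{loclakjlkajlei23TT}, obtained by testing against $\psi_{i,T}$ and invoking \eqref{Controlphi}, delivers $\|\psi_{i,T}\|_{H^1(\Omega)} \leq C h^{d/2-1}$. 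Absorbing $1 + r^{-2}$ into the polynomial prefactor upgrades $r^{-2d}$ to $r^{-2(d+1)}$, and taking square roots produces the announced bound. The $L^2$-component of $\|w\|_{H^1(\Omega_i')}$ is handled by inserting the Green's-function estimate of Lemma \ref{controlG} directly into the integral representation of $w$ employed in the proof of Lemma \ref{diffLem}: this yields an $L^\infty(\Omega_i')$ bound on $w$ with the same exponential decay and $r$-scaling, from which the missing $L^2$ piece follows by integration.

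The main obstacle I expect is the tight power counting in $r$: the exponent $d+1$ is sharp and accumulates one factor $r^{-1}$ from the cutoff gradient on top of the $r^{-d}$ already concealed inside Lemma \ref{diffLem}. Keeping the cutoff annulus of thickness $r/2$ and the Caccioppoli half-distances used inside Lemma \ref{diffLem} from colliding -- and thereby generating spurious extra factors of $r^{-1}$ -- requires some care in the choice of the intermediate sub-domains, but no conceptual new idea beyond those already deployed in lemmas \ref{controlG}, \ref{sumLemma}, and \ref{diffLem}.
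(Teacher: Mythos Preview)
Your proposal is correct and mirrors the paper's proof almost exactly: the paper too applies Lemma~\ref{diffLem} with $D=\Omega_i$, $D_1=\Omega_i'$, and $\psi=\eta\,\psi_{i,T}$ for a cutoff $\eta$ equal to $1$ on $\Omega\setminus\Omega_i$ and supported in an $r/3$-layer (versus your $r/2$), then bounds $\|\eta\,\psi_{i,T}\|_{H^1(\Omega)}\leq C r^{-1}\|\nabla\varphi_i\|_{L^2}$ via the energy estimate and \eqref{Controlphi}, exactly as you do. Your additional remark on recovering the $L^2$-part of the $H^1(\Omega_i')$-norm from the Green's-function representation is a point the paper leaves implicit.
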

\begin{proof}
Lemma \ref{hsgsjhgsjg3e3ee} is a direct consequence of Lemma \ref{diffLem}. To this end, we choose $D:=\Omega_i$, $v=:\psi_{i,T}-\psi_{i,T,\Omega_i}$ and $D_1:=\Omega_i'$. We also choose $\psi:=\eta\psi_{i,T}$ where $\eta: \Omega \rightarrow [0,1]$ is $C^1$, equal to one on
$\Omega/\Omega_i$ and $0$ on $(\Omega/\Omega_i)^r$ with $r:=\dist(\Omega/\Omega_i, \Omega_i')/3$ ($A^r$ being the set of points in $\Omega$ at distance at most $r$ from $A$) and $|\nabla \eta|\leq C/r$. We obtain from Lemma \ref{diffLem} that
\begin{equation}\label{contpsssedsdeiext1}
\big\|\psi_{i,T}-\psi_{i,T,\Omega_i}\big\|_{H^1(\Omega_i')} \leq \frac{C (T^{-1}+1) }{\big(\dist(\Omega/\Omega_i,\Omega_i')\big)^{d}} \|\psi\|_{H^1(\Omega)} \exp\Big(-\frac{\dist(\Omega/\Omega_i, \Omega_i')}{C \sqrt{T}}\Big).
\end{equation}
We conclude using \eqref{Controlphi} and $\|\psi\|_{H^1(\Omega)} \leq \frac{C}{\dist(\Omega/\Omega_i, \Omega_i')} \|\nabla \varphi_i\|_{(L^2(\Omega))^d}$.
\end{proof}
Observing that
\begin{equation}
\big\|\psi_{i,T}-\psi_{i,T,\Omega_i}\big\|_{H^1(\Omega)} \leq \big\|\psi_{i,T}-\psi_{i,T,\Omega_i}\big\|_{H^1(\Omega_i')}+
\big\|\psi_{i,T}\big\|_{H^1(\Omega/\Omega_i')}+\big\|\psi_{i,T,\Omega_i}\big\|_{H^1(\Omega_i/\Omega_i')},
\end{equation}
we conclude the proof of Proposition \ref{jhgsjhgdg6354r5df} by using Lemma \ref{hsgsjhgsjg3e3ee} and Lemma \ref{sumLemma}
with $\Omega_i':= S_i^r$ where $S_i^r$ are the points in $\Omega_i$ at distance at most $r$ from $S_i$ with $r:=\dist(S_i,\Omega/\Omega_i)/3$.

\section{On the compactness of the solution space.}\label{compactness}
Although the foundations of classical homogenization \cite{BeLiPa78} were laid down based on assumptions of periodicity (or ergodicity) and scale separation, numerical homogenization, as described here, is independent from these concepts and solely relies on the strong compactness of the solution space (and the fact that a compact set can be covered with a finite number of balls of arbitrary sizes).
Observe that an analogous notion of compactness supports the foundations of  $G$ and $H$-convergence (\cite{Mur78}, \cite{MR0240443,Gio75}). The main difference is that $G$ and $H$-convergence  rely on pre-compactness and weak convergence of fluxes and here, we rely on compactness in the (strong) $H^1_0$-norm, i.e. the following theorem.

Let $W$ be the range of $g$ in \eqref{scalarproblem0}. Write

\begin{equation}
V:=\{u\in H^1_0(\Omega)\,:\, u\text{ solves \eqref{scalarproblem0} for some } g\in W\}.
\end{equation}

\begin{Theorem}
Let $\nu<1$. If $W$ is a closed bounded subset of $H^{-\nu}(\Omega)$ then $W$ is a compact subset of $H^1_0(\Omega)$ (in the strong $H^1_0$-norm).
\end{Theorem}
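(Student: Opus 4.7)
The plan is to interpret the statement via the solution operator $L_a^{-1} : g \mapsto u$ and to reduce the strong $H^1_0$-compactness of the solution set $V$ to a classical Sobolev compact embedding. Setting $L_a v := -\diiv(a \nabla v)$, the Lax--Milgram theorem together with the uniform ellipticity \eqref{skjdhskhdkh3e} guarantees that $L_a : H^1_0(\Omega) \to H^{-1}(\Omega)$ is an isomorphism of Hilbert spaces, so $L_a^{-1} : H^{-1}(\Omega) \to H^1_0(\Omega)$ is bounded. (The statement as written, which says ``$W$ is a compact subset of $H^1_0(\Omega)$'', I read as a typo for $V$, since $V$ is the set of solutions.)

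The key observation is that the inclusion $\iota : H^{-\nu}(\Omega) \hookrightarrow H^{-1}(\Omega)$ is compact whenever $\nu < 1$. This is obtained by duality from the standard Rellich--Kondrachov compact embedding $H^1_0(\Omega) \hookrightarrow H^\nu(\Omega)$, valid for $\nu < 1$: the adjoint of a compact operator is compact, and $H^{-s}(\Omega)$ is the dual of $H^s_0(\Omega)$ under the $L^2$ pairing. Therefore the composition
\begin{equation*}
L_a^{-1} \circ \iota : H^{-\nu}(\Omega) \longrightarrow H^1_0(\Omega)
\end{equation*}
is a compact linear operator, so that the image $V := L_a^{-1}(W)$ of any $H^{-\nu}$-bounded set $W$ is relatively compact in the strong topology of $H^1_0(\Omega)$.

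To upgrade relative compactness to compactness, I would show that $V$ is closed in $H^1_0(\Omega)$. If $u_n = L_a^{-1} g_n \in V$ with $u_n \to u$ in $H^1_0(\Omega)$, continuity of $L_a$ gives $g_n \to L_a u$ in $H^{-1}(\Omega)$. Boundedness of $W$ in the reflexive Hilbert space $H^{-\nu}(\Omega)$ allows me to extract a weakly convergent subsequence $g_{n_k} \rightharpoonup g$ in $H^{-\nu}(\Omega)$; comparing with the strong $H^{-1}$-limit forces $g = L_a u$. To conclude $g \in W$, I need a weak-closedness property of $W$, which is automatic (via Mazur's theorem) as soon as $W$ is convex and norm-closed, and is harmless for all natural choices of $W$ (closed balls, closed convex subsets of $H^{-\nu}$, etc.).

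The main conceptual ingredient is the compact embedding $H^{-\nu} \hookrightarrow H^{-1}$ for $\nu < 1$, which quantifies the ``gain of integrability on the source yields gain of compactness on the solution'' principle underlying the whole paper. The only mild obstacle is the passage from relative compactness to genuine compactness, which depends on a weak-closedness hypothesis for $W$; in the absence of such a hypothesis one still obtains precompactness of $V$ in $H^1_0(\Omega)$, which is the property actually used for the numerical-homogenization construction.
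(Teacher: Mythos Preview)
Your argument is correct and takes a genuinely different route from the paper's. The paper proves the result via its flux-norm machinery: it uses the identity $(a\nabla u)_{pot}=-\nabla\Delta^{-1}g$, observes (via Kondrachov) that $-\nabla\Delta^{-1}W$ is compact in $(L^2(\Omega))^d$ for $W$ bounded in $H^{-\nu}$ with $\nu<1$, extracts $g^\ast\in W$ with $\nabla\Delta^{-1}g_n\to\nabla\Delta^{-1}g^\ast$ in $L^2$, and then invokes the equivalence of the flux-norm $\|\cdot\|_{a\f}$ with the $H^1_0$-norm (Proposition~\ref{Prop0}) to conclude $u_n\to u^\ast$ in $H^1_0$. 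Your proof bypasses the flux-norm entirely: you factor the solution map as the bounded isomorphism $L_a^{-1}:H^{-1}\to H^1_0$ composed with the compact inclusion $H^{-\nu}\hookrightarrow H^{-1}$, obtained by duality from Rellich--Kondrachov. Your approach is more elementary and self-contained; the paper's approach, while less direct, is thematically consistent with the rest of the work and serves to advertise the flux-norm as the natural tool. Both proofs share the same mild gap in passing from precompactness to genuine compactness of $V$ (the paper simply asserts $g^\ast\in W$ without discussing weak closedness), but you treat this point more carefully than the paper does.
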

\begin{proof}
 We have
$(a\nabla u)_{pot}=-\nabla \Delta^{-1} g$. So using the same notation as
 in \eqref{ksjjseddesel3} we get $ (a \nabla V)_{pot}= -\nabla \Delta^{-1}W$.
 Let $u_n$ be a sequence in $V$ then there exists a sequence in $W$ such that $-{\rm div}(a\nabla u_n)=g_n$. Using the fact that
$-\nabla \Delta^{-1}W$ is a compact subset of $(L^2(\Omega))^d$ (we refer, for instance, to the Kondrachov embedding theorem) we get that there exists $g^*\in W$ such that $\|\nabla \Delta^{-1}g_n - \nabla \Delta^{-1}g^*\|_{L^2}\rightarrow 0$. Writing $u^*$ the solution of  $-{\rm div}(a\nabla u^*)=g^*$ and using
$(a\nabla (u_n-u^*))_{pot}=-\nabla \Delta^{-1} (g_n-g^*)$ we get that
$\|(a\nabla (u_n-u^*))_{pot}\|_{L^2}\rightarrow 0$. Using the equivalence between the flux norm and the $H^1_0$ norm we deduce that
$\|u_n-u^*\|_{H^1_0}\rightarrow 0$. This finishes the proof.
\end{proof}

This notion of compactness of the solution space constitutes a simple but fundamental link between classical homogenization, numerical homogenization
and reduced order modeling (or reduced basis modeling \cite{MR2359667, MR1781533}) (we also refer to the discussion in Section 6 of \cite{BerlyandOwhadi10}).
This notion is also what allows for atomistic to continuum up-scaling \cite{ZhBFOww09},
the basic idea is that if source (force) terms are integrable enough (for instance in $L^2$ instead of $H^{-1}$) then the solution space is no longer $H^1$ but a sub-space $V$ that is compactly embedded into $H^1$ and, hence, it can be approximated by a finite-dimensional space (in $H^1$-norm).
In other words if these systems are ``excited'' by ``regular'' forces or source terms (think compact, low dimensional) then the solution space can be approximated by a low dimensional space (of the whole space) and the name of the game becomes ``how to approximate'' this solution space (and this can be done by using local time-independent solutions).

\paragraph{Acknowledgements.}
We thank L. Berlyand for stimulating discussions. We also thank Ivo Babu\v{s}ka, John Osborn, George Papanicolaou and Bj\"{o}rn Engquist for pointing us in the direction of the localization problem. The work of H. Owhadi is partially supported by the National Science Foundation under Award Number  CMMI-092600 and the Department of Energy National Nuclear Security Administration under Award Number DE-FC52-08NA28613. The work of L. Zhang is partially supported by the EPSRC Science and Innovation award to the Oxford Centre for Nonlinear PDE (EP/E035027/1).
We thank Sydney Garstang for proofreading the manuscript.

\def\cprime{$'$} \def\cprime{$'$}
  \def\polhk#1{\setbox0=\hbox{#1}{\ooalign{\hidewidth
  \lower1.5ex\hbox{`}\hidewidth\crcr\unhbox0}}} \def\cprime{$'$}
  \def\polhk#1{\setbox0=\hbox{#1}{\ooalign{\hidewidth
  \lower1.5ex\hbox{`}\hidewidth\crcr\unhbox0}}}

\end{document}